\newcommand{\cw}{{\mathcal W}}
\def\Prob{{\mathbb P}}
\def\row{{\rm row}}
\def\Event{{\mathcal E}}
\def\Exp{{\mathbb E}}
\def\Net{{\mathcal N}}
\def\Var{{\rm Var}}
\newcommand*{\N}{\mathbb{N}}
\newcommand*{\R}{\mathbb{R}}
\newtheorem{theorem}{Theorem}[section]
\newtheorem{corollary}[theorem]{Corollary}
\newtheorem{prop}[theorem]{Proposition}
\newtheorem{lemma}[theorem]{Lemma}
\newtheorem{conjecture}[theorem]{Conjecture}
\theoremstyle{definition}
\newtheorem{definition}[theorem]{Definition}
\newtheorem*{remark}{Remark}
\newcommand\blfootnote[1]{
    \begingroup
    \renewcommand\thefootnote{}\footnote{#1}
    \addtocounter{footnote}{-1}
    \endgroup
}
\title{
\textbf{On the optimal objective value
of random linear programs} 
}
\author{Marzieh Bakhshi\footnote{Department of Industrial \& System Engineering, University of Tennessee, Knoxville, TN, USA. Emails: mbakhsh1@vols.utk.edu, jostrows@utk.edu.} \and James Ostrowski\footnotemark[1] \and Konstantin Tikhomirov\footnote{Department of Mathematical Sciences, Carnegie Mellon University, Pittsburgh, PA, USA.
Email: ktikhomi@andrew.cmu.edu.}}
\date{\today}
\begin{document}

\maketitle
\begin{abstract}
We consider the problem of maximizing 
$\langle c,x \rangle$ subject to the constraints $Ax \leq \mathbf{1}$,
where $x\in\R^n$,
$A$ is an $m\times n$ matrix with mutually independent centered subgaussian
entries of unit variance, and $c$ is a cost vector of unit Euclidean length.
In the asymptotic regime $n\to\infty$, $\frac{m}{n}\to\infty$, and
under some mild assumptions on $c$,
we prove that the optimal objective value $z^*$ of the linear program
satisfies
$$
\lim\limits_{n\to\infty}\sqrt{2\log(m/n)}\,z^*= 1\quad
\mbox{almost surely}.
$$
In the context of high-dimensional convex geometry, our findings
imply sharp asymptotic bounds on the spherical mean width
of the random convex polyhedron $P=\{x\in\R^n:\;
Ax\leq \mathbf{1}\}$.
We provide numerical experiments
as supporting data for the theoretical predictions. 
Further, we carry out numerical studies of the limiting
distribution and the standard deviation of $z^*$.
\end{abstract}

\blfootnote{The authors' names are in alphabetical order.}

\pagenumbering{roman}
\pagenumbering{arabic}


\section{Introduction}
\subsection{Problem setup and motivation}

Consider the linear program (LP)
\begin{equation} \label{lp_max}
\begin{split}
    &\text{maximize} \  \ \langle c,x \rangle \\
    &\text{subject to }\ Ax \leq \mathbf{1} \\
    & x \in \R^n
\end{split}
\end{equation}
where the entries of the $m\times n$
coefficient matrix $A$ are mutually independent random variables,
and ${\bf 1}$ denotes the vector of all ones.
Note that $x$ is a vector of free variables. 
In this note we deal with two settings:
either $c$ is a non-random unit vector
or $c$ is random uniformly distributed on the unit Euclidean sphere.
For any realization
of $A$ the above
LP is feasible since the zero vector satisfies all the constraints.
We denote the value of the objective function of \eqref{lp_max} by $z=z(x)= \langle c,x \rangle$, the optimal objective value by $z^*$, and any optimal solution by $x^*$.

\bigskip

The main problem we would like to address here is the following:
{\it Under the assumption that the number of constraints
is significantly larger than the number of variables,
what is the magnitude of the optimal objective value?}

\bigskip

We provide theoretical guarantees on the optimal objective value
$z^*$ in the asymptotic regime $n\to\infty$, $\frac{m}{n}\to\infty$,
and carry out empirical studies which confirm that
medium size random LPs
(with the number of constraints in the order of a few thousands) closely follow theoretical predictions.
The numerical experiments further allow us to formulate
conjectures related to the distribution of the optimal objective value,
which we hope to address in future works.

\bigskip

Our interest in LP \eqref{lp_max} is twofold.
\begin{itemize}
\item[(i)] The LP \eqref{lp_max} with i.i.d random coefficients
can be viewed as
the simplest model of an ``average-case'' linear program.
LP of this form
has been considered in the mathematical literature in the context
of estimating the average-case complexity of the simplex method
\cite{B82,B87,B99} (see Subsection~\ref{litreview} below for a more
comprehensive literature review).
Despite significant progress in that direction,
fundamental questions regarding properties of random LPs
remain open \cite[Section~6]{ST04}. The results obtained in this note contribute
to better understanding of the geometry of random feasible
regions defined as intersections of independent random half-spaces.
We view our work as a step towards
investigating random LPs allowing for sparse, inhomogeneous
and correlated constraints, which
would serve as more realistic models of linear programs occurring in practice.

\item[(ii)] Assuming that the cost vector
$c$ is uniformly distributed on the unit sphere and is independent from $A$,
the quantity
$$
2\,\Exp_c\,z^*=
2\,\Exp_c\,\max\big\{\langle c,x\rangle:\;Ax\leq {\bf 1}\big\}
$$
(where $\Exp_c$ is the expectation taken with respect to the randomness of $c$)
is the {\it spherical mean
width} $\cw(P)$ of the random polyhedron $P=\{x\in\R^n:\;Ax \leq \mathbf{1}\}$.
The mean width is a classical parameter associated with a convex set;
in particular, $\cw(P)=2M(P^\circ)$,
where $P^\circ$ is the {\it polar body} for $P$ defined as the convex
hull of the rows of $A$, and $M(\cdot)$ is the average value of the {\it Minkowski
functional} for $P^\circ$ on the unit Euclidean sphere,
$$
M(P^\circ)=\Exp_c\,\|c\|_{P^\circ}
=\Exp_c\,\inf\big\{\lambda>0:\,\lambda^{-1}c\in P^\circ\big\}.
$$
The mean width plays a key role in the study of projections of convex bodies
and volume estimates
(see, in particular, \cite[Chapters~9--11]{VershyninsBook}
and \cite{Pisier}).
The existing body of work (which we revise
in Subsection~\ref{litreview})
has focused on the problem of estimating the mean width {\it up to a constant
multiple}. In this paper, we provide sharp asymptotics of $\cw(P)$
in the regime of parameters mentioned above.
\end{itemize}

\subsection{Notation}

In the course of the note we use the letter $K$ (with subscripts) for constant numbers and 
$O(\cdot)$, $o(\cdot)$, $\Omega(\cdot)$, $\omega(\cdot)$ for standard asymptotic notations.
Specifically, for positive functions $f$ and $g$ defined on positive real numbers, $f(x) = O \big( g(x) \big)$ if there exists $K>0$ and a real number $x_0$ such that
$f(x) \leq K g(x)$ for all $x \geq x_0$, and $f(x) = o \big( g(x) \big)$ if 
$\lim_{x \rightarrow \infty} \frac{f(x)}{g(x)} = 0$. Further, 
\begin{align*}
    &f(x) = \Omega \big( g(x) \big) \ \ \text{if and only if} \ \ g(x) = O \big( f(x) \big), \\
    &f(x) = \omega \big( g(x) \big) \ \ \text{if and only if} \ \ g(x) = o \big( f(x) \big). 
\end{align*}

\bigskip

\begin{definition}
A random variable $\xi$ is {\it $K$--subgaussian} if
$$
\Exp\,\exp(\xi^2/K^2)\leq 2, \ \text{for some } \ K>0. 
$$
\end{definition}
The smallest $K$ satisfying the above inequality
is the {\it subgaussian moment} of $\xi$ (see, for example,
\cite[Chapter~2]{VershyninsBook}).
We will further call a random vector with mutually independent components
$K$--subgaussian if every component is $K$--subgaussian.
Examples of subgaussian variables include
Rademacher (i.e symmetric $\pm1$) random variables, and,
more generally, any bounded random variables.
The Rademacher distribution is of interest to us
since a matrix with i.i.d $\pm 1$ entries
produces a simplest model of a normalized system of constraints
with {\it discrete} bounded coefficients.

\subsection{Theoretical guarantees}

In this paper, we study {\it asymptotics}
of the optimal objective value of \eqref{lp_max}
in the regime when the number of free variables $n$ tends to infinity.
We will further assume that the number of constraints
$m=m_n$ is infinitely large compared to $n$, i.e
$\lim\limits_{n\to\infty}\frac{m_n}{n}=\infty$
(while the setting $\frac{m_n}{n}=O(1)$ is
of significant interest, it is not covered by our argument).

\medskip

The main
result of this note is 
\begin{theorem}[Main result: asymptotics of $z^*$ in subgaussian setting]\label{th main nongauss}
Let $K>0$.
Assume that $\lim\limits_{n\to\infty}\frac{m_n}{n}=\infty$,
and that for each $n$, the entries of the $m_n\times n$ coefficient matrix $A=A(n)$
are mutually independent
centered $K$--subgaussian variables of unit variances.
Assume further
that the non-random unit
cost vectors $c=c(n)\in\R^n$ satisfy $\lim\limits_{n\to\infty}\log^{3/2}(m_n/n)\,\|c\|_\infty=0$.
Then for any constant $\varepsilon>0$, we have
$$
\Prob\big\{1-\varepsilon\leq \sqrt{2\log(m_n/n)}\,z^* \leq 1+\varepsilon\big\}
\geq 1-n^{-\omega(1)},
$$
and, in particular,
$\lim\limits_{n\to\infty} \sqrt{2\log(m_n/n)}\,z^* = 1$ almost surely.
\end{theorem}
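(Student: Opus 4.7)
By LP duality, $z^* = \min\{\sum_i y_i : y\ge 0,\ \sum_i y_i A_i = c\}$, i.e., $z^*$ equals the Minkowski gauge of $c$ with respect to the random polytope $P^\circ = \conv\{0, A_1,\dots,A_m\}$. Set $\tau := \sqrt{2\log(m/n)}$, and decompose $A_i = B_i c + A_i'$ with $B_i := \langle A_i, c\rangle$ and $A_i'\in c^\perp$. My goal is to prove, with probability $1 - n^{-\omega(1)}$, both $z^*\le (1+\varepsilon)/\tau$ (via an explicit dual feasible $y$) and $z^*\ge (1-\varepsilon)/\tau$ (via an explicit primal feasible $x$). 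The hypothesis $\log^{3/2}(m/n)\|c\|_\infty\to 0$, combined with a quantitative Berry--Esseen / Bentkus-type comparison, should yield $\Prob(B_i\ge t) = (1+o(1))\,\overline{\Phi}(t)$ uniformly for $t$ of order $\tau$; consequently, for every fixed $\delta>0$, whp $|\{i: B_i\ge (1-\delta)\tau\}|$ exceeds $n$ by an unbounded factor while $|\{i: B_i\ge (1+\delta)\tau\}| = o(n)$.

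\textbf{Upper bound via a dual construction.} Fix $\delta = \varepsilon/3$ and $S = \{i: B_i\ge (1-\delta)\tau\}$. I will construct $y\ge 0$ supported on $S$ with $\sum_{i\in S} y_i A_i = c$; splitting the equation along $c$ and $c^\perp$ gives the scalar condition $\sum_{i\in S} y_i B_i = 1$ and the $(n{-}1)$-dimensional condition $\sum_{i\in S} y_i A_i' = 0$. The latter admits a strictly positive solution iff the origin lies in the interior of $\cone\{A_i': i\in S\}$, which follows whp by a Wendel-type argument once $|S|$ exceeds $n$ by an unbounded factor and the vectors $A_i'$ form an approximately isotropic family in $c^\perp$; concretely, one discretizes $S^{n-2}\subset c^\perp$ and bounds the probability that all the $A_i'$ simultaneously lie in a single closed half-space. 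Rescaling the positive null combination so that $\sum y_i B_i = 1$ and using $B_i\ge (1-\delta)\tau$ on $S$ yields $\sum y_i \le 1/[(1-\delta)\tau] \le (1+\varepsilon)/\tau$.

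\textbf{Lower bound via a primal construction.} Take $\sigma = (1+\varepsilon/3)\tau$ and $S' = \{i: B_i > \sigma\}$, so $|S'| = o(n)$ whp. Pick $x = (c+w)/\sigma$ with $w\in c^\perp$ to be chosen, giving $\langle c,x\rangle = 1/\sigma \ge (1-\varepsilon)/\tau$. Feasibility reduces to $\langle A_i', w\rangle \le \sigma - B_i$ for every $i$. I take $w$ as the minimum-Euclidean-norm solution of the $|S'|$ equations $\langle A_i', w\rangle = \sigma - B_i$ for $i\in S'$; since $|S'|< n-1$ this system is underdetermined and such a $w$ exists. Standard subgaussian singular-value bounds yield $\sigma_{\min}(A'_{S'})\gtrsim \sqrt{n}$ (using $|S'|\ll n$), while $|B_i - \sigma| = O(\sqrt{\log n}/\tau)$ for $i\in S'$; together these give $\|w\|_2 = o(1)$ whp. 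A union bound on the $m$ values $\langle A_i', w\rangle$ (each subgaussian with parameter $\|w\|$) then secures $\langle A_i',w\rangle \le \sigma - B_i$ for every $i\notin S'$, after absorbing a thin window $B_i\in(\sigma - o(1),\sigma]$ into $S'$ without changing $|S'| = o(n)$.

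\textbf{Main obstacle.} The principal technical difficulty is the subgaussian (non-Gaussian) setting, where $B_i$ and $A_i'$ are not independent and the tails of $B_i$ near the scale $\tau$ must be pinned down to relative precision $1+o(1)$. The assumption $\log^{3/2}(m/n)\|c\|_\infty\to 0$ is exactly what a Berry--Esseen-type argument needs to compare $\Prob(B_i\ge t)$ with $\overline{\Phi}(t)$ to relative accuracy $1+o(1)$ throughout $t\le (1+o(1))\tau$, and simultaneously to make the conditional law of $A_i'$ given a large $B_i$ close enough to its unconditional law that the cone-spanning step in the upper bound and the singular-value step in the lower bound survive the conditioning.
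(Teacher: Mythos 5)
Your route is genuinely different from the paper's: you invoke LP duality and try to certify the upper bound on $z^*$ by building a positive combination of the conditioned rows equal to $c$, and you try to certify the lower bound by a \emph{single-shot} minimum-norm correction $w\in c^\perp$ repairing all violated constraints at once. The paper instead proves the upper bound by a covering/union-bound argument against all points of $P\cap\{\langle c,x\rangle = (1+\varepsilon)/\tau\}$ (Theorem~\ref{th main upper}, Propositions~\ref{aksjfnoifurhfoioin} and~\ref{aljhfbofjhbojb}), and the lower bound by an \emph{iterated} block-projection scheme (Theorem~\ref{th main lower}), where the correction is applied over $O(\log k_0)$ rounds with geometrically shrinking sets of violated constraints.

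The concrete gap is in your one-shot lower bound. Your estimate $\|w\|_2\lesssim \sqrt{|S'|}\max_{i\in S'}|B_i-\sigma|\,/\sqrt n$ does not give $\|w\|_2=o(1)$ across the full range $m/n\to\infty$. With $\sigma=(1+\varepsilon/3)\tau$, one has $\max_{i\in S'}(B_i-\sigma)\lesssim \sqrt{2\log m}-\sigma$, and $|S'|\approx n(n/m)^{\Omega(\varepsilon)}$. When $m/n$ grows slowly --- say $m=n\cdot\log\log n$, so that $\tau=\sqrt{2\log\log\log n}=O(1)$ up to slowly growing factors while $\sqrt{2\log m}\approx\sqrt{2\log n}$ --- this yields
$$
\|w\|_2\ \lesssim\ (n/m)^{\Omega(\varepsilon)}\cdot\sqrt{\log n}\ =\ (\log\log n)^{-\Omega(\varepsilon)}\cdot\sqrt{\log n}\ \longrightarrow\ \infty,
$$
so the union bound on $\langle A_i',w\rangle$ for $i\notin S'$ collapses, and the ``thin-window absorption'' makes things worse rather than better because widening $S'$ enlarges $\|w\|$. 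This is precisely the difficulty the paper's iterative scheme is designed for: each round only repairs a fraction of the violations, the residual violation magnitudes decay geometrically ($(3/4)^{h}\varepsilon/16$ in the proof of Theorem~\ref{th main lower}), and summing the geometric series keeps the total perturbation under control. A single projection step cannot replicate that budget. Beyond this, the two technical issues you flag as the ``main obstacle'' --- (i) the relative-precision tail comparison $\Prob(B_i\ge t)=(1\pm o(1))\overline\Phi(t)$ and (ii) the conditional law of $A_i'$ given $B_i$ large in the subgaussian case --- are genuine and are not minor: the paper builds Propositions~\ref{aljhofuhwbfohjbfldsajfbalkj} and~\ref{aksjfnoifurhfoioin} exactly for (i), and deliberately avoids (ii) entirely by never conditioning on $B_i$; your dual construction and the $\sigma_{\min}(A'_{S'})\gtrsim\sqrt n$ step both require (ii), and the Wendel-type cone-coverage argument is asserted rather than proved.
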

\begin{remark}
The result is obtained as a combination of two separate statements.
We refer to Theorem~\ref{th main lower} and Corollary~\ref{linfty norm cor} for the lower bound on $z^*$
and Theorem~\ref{th main upper} for the upper bound.
\end{remark}

Distributional invariance under rotations 
makes analysis of {\it Gaussian} random polytopes considerably simpler. In the next theorem, we avoid any structural assumptions on the cost vectors,
and thereby remove any implicit upper bounds on $\frac{m_n}{n}$:
\begin{theorem}[Asymptotics of $z^*$ in Gaussian setting]\label{th main gauss}
Assume that $\lim\limits_{n\to\infty}\frac{m_n}{n}=\infty$
and that for each $n$, the entries of the $m_n\times n$ coefficient matrix $A=A(n)$
are mutually independent standard Gaussian variables.
For each $n$, let $c(n)$ be a non-random unit cost vector.
Then for any constant $\varepsilon>0$ we have
$$
\Prob\big\{1-\varepsilon\leq \sqrt{2\log(m_n/n)}\,z^* \leq 1+\varepsilon\big\}
\geq 1-n^{-\omega(1)}.
$$
In particular,
$\lim\limits_{n\to\infty} \sqrt{2\log(m_n/n)}\,z^* = 1$ almost surely.
\end{theorem}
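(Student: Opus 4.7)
The plan is to exploit the rotational invariance of the standard Gaussian distribution to reduce to Theorem~\ref{th main nongauss} in all but a ``fast-growth'' range of $m/n$, which is then handled by a direct Gaussian argument. Since $A\stackrel{d}{=}AU$ for any $n\times n$ orthogonal matrix $U$, applying the change of variables $x\mapsto Ux$ in~\eqref{lp_max} shows that the distribution of $z^*$ depends on $c$ only through $\|c\|=1$, so at each $n$ I am free to pick any unit vector for $c$.

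Suppose first that $\log^{3/2}(m/n)=o(\sqrt n)$. I choose $c=(1/\sqrt n,\ldots,1/\sqrt n)$, so that $\|c\|_\infty=1/\sqrt n$ and the hypothesis $\log^{3/2}(m/n)\|c\|_\infty\to 0$ of Theorem~\ref{th main nongauss} is satisfied (standard Gaussians are $K$-subgaussian for an absolute constant $K$); Theorem~\ref{th main nongauss} then delivers the claim directly. In the complementary regime $\log^{3/2}(m/n)=\Omega(\sqrt n)$, one has $\log n=o(\log(m/n))$ and consequently $\sqrt{2\log m}=(1+o(1))\sqrt{2\log(m/n)}$; moreover $m\ge\exp(\Omega(n^{1/3}))$, so any tail of the form $m^{-\Omega(1)}$ is $n^{-\omega(1)}$. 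Here I pick $c=e_1$ and write $b_i=(a_i)_1$, $\hat a_i=(a_{i,2},\ldots,a_{i,n})$. The lower bound on $z^*$ is immediate: the point $e_1/\max_i b_i$ is feasible (once the max is positive), giving $z^*\ge 1/\max_i b_i\ge(1-o(1))/\sqrt{2\log m}=(1-o(1))/\sqrt{2\log(m/n)}$ with probability $1-n^{-\omega(1)}$ by Gaussian concentration of the maximum.

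The matching upper bound on $z^*$ is the substantive step. LP duality (applied twice, or once with an appeal to Sion's minimax) rewrites $z^*=1/M$, where
\[
M=\min_{v\in\R^{n-1}}\max_{i\in[m]}\big(b_i+\langle\hat a_i,v\rangle\big),
\]
and it suffices to show $M\ge(1-o(1))\sqrt{2\log m}$. A preliminary compactness step confines the minimizer $v^*$ to a ball $\|v\|\le V$ with $V=O(\sqrt{\log m})$; this uses an inradius-type estimate for $\conv\{\hat a_i\}$, roughly $\min_{\|u\|=1}\max_i\langle\hat a_i,u\rangle\gtrsim\sqrt{\log m}$, which forces $\max_i(b_i+\langle\hat a_i,v\rangle)$ to exceed $\sqrt{2\log m}$ whenever $\|v\|\gg V$. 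On an $\varepsilon$-net of that ball, of cardinality $\exp(O(n\log\log m))$, for each fixed $v$ the variables $b_i+\langle\hat a_i,v\rangle$ are i.i.d.\ $N(0,1+\|v\|^2)$ with $1+\|v\|^2\ge 1$, so the standard max-of-Gaussians tail bound gives $\Prob\big(\max_i(b_i+\langle\hat a_i,v\rangle)\le(1-\delta)\sqrt{2\log m}\big)\le\exp(-m^{\Omega(\delta)})$. Since $\log m\gtrsim n^{1/3}$ in this regime, $m^{\Omega(\delta)}$ dominates the net cardinality and the union bound closes with failure probability $n^{-\omega(1)}$; a Lipschitz argument (with constant at most $\max_i\|\hat a_i\|\lesssim\sqrt{n+\log m}$) propagates the estimate from the net to the entire ball.

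The main obstacle I anticipate is the compactness step, i.e.\ quantitatively confining $v^*$ so that the $\varepsilon$-net remains small enough for the union bound to close while still capturing every potential minimizer. This requires a uniform-in-direction lower bound on $\max_i\langle\hat a_i,u\rangle$ over $u\in S^{n-2}$---an inradius estimate for $\conv\{\hat a_i\}$---whose constants must be aligned with the growth of $\log m$ relative to $n$. The remaining ingredients are standard: Gaussian tail bounds for the maximum, $\varepsilon$-net and union bound arguments, and Lipschitz continuity.
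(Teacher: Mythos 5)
Your proposal is correct and shares the overall structure with the paper (rotational invariance to reduce to a convenient cost vector, then two regimes according to the growth of $m/n$), but the upper bound in the fast-growth regime is handled by a genuinely different route. The paper proves Proposition~\ref{prop main 3}: a sharp bound $R(P)\le(1+\varepsilon)/\sqrt{2\log(m/n)}$ on the outer radius of $P$, obtained by taking a very fine $\delta$-net on $S^{n-1}$ with $\delta=\exp(-(m/n)^{K})$ and controlling not the maximum but the order statistic $G_{(m-k)}$ of the Gaussian image (Corollary~\ref{aljhsfaberoguehbouerh}), which gives per-point failure probabilities small enough to survive the huge net. Since $z^*\le R(P)$, this one estimate delivers the upper bound in the \emph{entire} range $m=\omega(n)$, so no case split is needed on that side. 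Your route instead fixes $c=e_1$, uses the change of variables $v=\hat x/x_1$ to write $z^*=1/M$ with $M=\min_v\max_i(b_i+\langle\hat a_i,v\rangle)$, confines the minimizer to a bounded ball via an inradius estimate for $\conv\{\hat a_i\}$, and then runs a net-plus-max-of-Gaussians union bound over that ball. This does close: since $\log m\gtrsim n^{1/3}$, the per-point failure $\exp(-m^{\Omega(\delta)})$ beats any net of size $\exp(O(n\log n))$. But observe that the ``inradius-type estimate'' $\min_{\|u\|=1}\max_i\langle\hat a_i,u\rangle\gtrsim\sqrt{\log m}$ you flag as the main obstacle is exactly (the reciprocal form of) the paper's outer-radius Proposition~\ref{prop main 3}, and proving even a crude constant there requires a comparably fine net (resolution $\sim\sqrt{\log m/n}$, since $\max_i\|\hat a_i\|\sim\sqrt n$ can dominate $\sqrt{\log m}$). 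So your approach ends up needing the same core ingredient as the paper \emph{plus} the duality step and the second net over the ball; the paper's is strictly shorter. Two small quantitative points: the ball radius can in fact be taken $V=O(1)$ rather than $O(\sqrt{\log m})$ (one already has $\max_i(b_i+\langle\hat a_i,v\rangle)\ge -(1+o(1))\sqrt{2\log m}+c\|v\|\sqrt{\log m}$, which exceeds $\sqrt{2\log m}$ once $\|v\|\ge C$), and your net cardinality is $\exp(O(n\log n))$ rather than $\exp(O(n\log\log m))$ when $\log m\ll n$, though neither affects whether the union bound closes. The lower-bound step (feasibility of $e_1/\max_i b_i$ and Gaussian concentration of the maximum) matches the paper's in spirit.
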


\medskip

As we mentioned
before, the setting when the cost vector $c$ is uniform on the Euclidean sphere
is of special interest in the field of convex geometry
since the quantity $\cw=2\,\Exp_c\,z^*$ (where $\Exp_c$ denotes the expectation
taken with respect to $c$) is the spherical mean width of the random
polyhedron $P=P(n)=\{x\in\R^n:\;Ax \leq \mathbf{1}\}$.
As corollaries of Theorems~\ref{th main nongauss}
and~\ref{th main gauss}, we obtain
\begin{corollary}[The mean width in subgaussian setting]\label{mw cor nongauss}
Let $\lim\limits_{n\to\infty}\frac{m_n}{n}=\infty$,
let matrices $A=A(n)$ be as in Theorem~\ref{th main nongauss},
and assume additionally that $\log^{3/2}m_n=o(\sqrt{n/\log n})$.
Then the spherical mean width
$\cw(P)$ of the polyhedron $P=P(n)=\{x\in\R^n:\;Ax \leq \mathbf{1}\}$
satisfies
$$
\lim\limits_{n\to\infty} \sqrt{2\log(m_n/n)}\,\cw(P) = 2\quad\mbox{almost surely.}
$$
\end{corollary}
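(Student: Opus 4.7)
The plan is to derive Corollary~\ref{mw cor nongauss} from Theorem~\ref{th main nongauss} by integrating out the random cost vector $c$. Since $\cw(P) = 2\,\Exp_c z^*(A,c)$ when $c$ is uniform on $S^{n-1}$, the goal becomes
$$
\sqrt{2\log(m/n)}\,\Exp_c z^*(A,c) \longrightarrow 1 \quad \text{almost surely in } A.
$$

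First I would verify that a uniformly distributed $c \in S^{n-1}$ meets the Theorem~\ref{th main nongauss} hypothesis $\log^{3/2}(m/n)\|c\|_\infty = o(1)$ with probability $1 - n^{-\omega(1)}$. A standard union bound on the tails of individual coordinates of $c$ yields $\|c\|_\infty \leq C\sqrt{\log n/n}$ with such probability, and combining with the corollary's hypothesis $\log^{3/2} m = o(\sqrt{n/\log n})$ gives the required smallness. Defining the joint event $E_\varepsilon := \{\sqrt{2\log(m/n)}\,z^* \in [1-\varepsilon, 1+\varepsilon]\}$ in the $(A,c)$-probability space, the law of total probability together with Theorem~\ref{th main nongauss} applied conditionally on $c$ yields $\Prob(E_\varepsilon^c) \leq n^{-\omega(1)}$. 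By Markov's inequality, $\Prob_c(E_\varepsilon^c \mid A) \leq n^{-\omega(1)/2}$ with probability at least $1 - n^{-\omega(1)/2}$ over $A$.

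Next I would bound $\Exp_c z^*(A,c)$ by splitting the expectation over $E_\varepsilon$ and $E_\varepsilon^c$. The $E_\varepsilon$-contribution is sandwiched between $(1-\varepsilon)\Prob_c(E_\varepsilon\mid A)/\sqrt{2\log(m/n)}$ and $(1+\varepsilon)/\sqrt{2\log(m/n)}$, which already delivers the desired asymptotic once $\Prob_c(E_\varepsilon\mid A) \to 1$. For the $E_\varepsilon^c$-contribution, Cauchy--Schwarz gives
$$
\Exp_c z^*\,\mathbf{1}_{E_\varepsilon^c} \leq \bigl(\Exp_c (z^*)^2\bigr)^{1/2}\,\Prob_c(E_\varepsilon^c\mid A)^{1/2}.
$$
Using the deterministic bound $z^*(A,c) \leq R(A) := \sup_{x \in P}\|x\|$ combined with a polynomial-in-$n$ upper estimate on $R(A)$ holding with probability $1 - n^{-\omega(1)}$ in $A$, this contribution is $o(1/\sqrt{2\log(m/n)})$. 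Borel--Cantelli then upgrades the resulting convergence in probability to the almost sure statement.

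The main obstacle will be supplying the polynomial upper bound on the circumradius $R(A)$. I would approach this through LP duality, writing $R(A) = \sup_{\|c\|=1}\min\{\mathbf{1}^{T} y : A^{T} y = c,\, y \geq 0\}$, and then a $\delta$-net argument on $S^{n-1}$ together with anti-concentration of the subgaussian linear forms $\langle a_i, c\rangle$ to show that the rows of $A$ positively span $\R^n$ with bounded $\ell_1$-cost. Because the target only requires $R(A) \leq \mathrm{poly}(n)$ while Theorem~\ref{th main nongauss} already gives $n^{-\omega(1)}$-sharp control, fairly crude net/matrix estimates are enough.
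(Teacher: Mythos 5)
Your proposal is correct in outline but takes a genuinely different route from the paper. The paper proves the two inequalities separately (Corollary~\ref{mw cor lower} for the lower bound and an unnumbered corollary at the end of Section~\ref{akjshbfakjhfbkqjwhb} for the upper bound), and in each case it argues by contradiction: conditioning on the $A$-measurable failure event, extracting a subsequence $n_k$ where the mean width is off, using a (reverse) Markov inequality in $c$ together with the independence of the $c(n_k)$ and Borel--Cantelli to show that the objective value itself is off infinitely often in $c$, and then contradicting the almost-sure conclusion of the base theorem. You instead integrate out $c$ directly: from $\Prob_{A,c}(E_\varepsilon^c)\le n^{-\omega(1)}$ you get, via Markov, that $\Prob_c(E_\varepsilon^c\mid A)\le n^{-\omega(1)}$ outside an $n^{-\omega(1)}$-exceptional set of $A$, split $\Exp_c z^*$ across $E_\varepsilon$ and $E_\varepsilon^c$, absorb the bad piece with an outer-radius bound, and finish with Borel--Cantelli in $A$. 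Your approach is more quantitative and treats both inequalities in one shot, at the cost of needing (i) the bound in Theorem~\ref{th main nongauss} to be uniform over all cost vectors satisfying a common quantitative $\|c\|_\infty$ bound --- this is true but is not literally what the theorem statement says, so it deserves a remark --- and (ii) a high-probability bound on the circumradius $R(P)$. Regarding (ii), you need not re-derive anything via LP duality: Proposition~\ref{askfjnafkjnofjnfkjnasd} of the paper already gives $R(P)\le C(K)$ with probability $1-n^{-\omega(1)}$ (a constant bound, much stronger than the polynomial bound you ask for), and the paper itself uses exactly this proposition for the upper-bound side of its argument. With that citation, and the uniformity remark made explicit, your proof goes through and is a clean alternative to the paper's contradiction/subsequence scheme.
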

\begin{corollary}[The mean width in Gaussian setting]\label{mw cor gauss}
Let $m_n$ and $A=A(n)$ be as in Theorem~\ref{th main gauss}.
Then the spherical mean width
$\cw(P)$ of the polyhedron $P=P(n)=\{x\in\R^n:\;Ax \leq \mathbf{1}\}$
satisfies
$$
\lim\limits_{n\to\infty} \sqrt{2\log(m_n/n)}\,\cw(P) = 2\quad\mbox{almost surely.}
$$
\end{corollary}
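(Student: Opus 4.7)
The plan is to deduce the corollary from Theorem~\ref{th main gauss} by integrating $z^*(A,c)$ against the uniform distribution of $c$ on the sphere. Since $\cw(P(n))=2\,\Exp_c z^*(A,c)$, the goal is to show that the $A$-measurable random variable $\Exp_c z^*(A,c)$ concentrates around $1/\sqrt{2\log(m/n)}$ with probability $1-n^{-\omega(1)}$, after which summability in $n$ and Borel--Cantelli will deliver the stated almost sure convergence. The key structural observation I would use is the rotational invariance of the i.i.d.\ standard Gaussian distribution on $A$, which implies that the law of $z^*(A,c)$ does not depend on the choice of unit vector $c$. Combined with Fubini and Theorem~\ref{th main gauss}, this gives the joint estimate
\[
\Prob_{A,c}\bigl\{\bigl|\sqrt{2\log(m/n)}\,z^*(A,c)-1\bigr|>\varepsilon\bigr\}\leq n^{-\omega(1)}
\]
for any $\varepsilon>0$, where $c$ is uniform on $S^{n-1}$ independently of $A$. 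A single application of Markov's inequality to the conditional probability $p(A):=\Prob_c\{|\sqrt{2\log(m/n)}\,z^*(A,c)-1|>\varepsilon\mid A\}$ then yields $p(A)\leq n^{-\omega(1)}$ on an $A$-event of probability at least $1-n^{-\omega(1)}$.

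On this favorable $A$-event I would decompose $\Exp_c z^*(A,c)=\Exp_c[z^*\mathbf{1}_{G}]+\Exp_c[z^*\mathbf{1}_{G^c}]$, where $G=\{c\in S^{n-1}:|\sqrt{2\log(m/n)}\,z^*(A,c)-1|\leq\varepsilon\}$. Because $z^*\geq 0$, the good-set integral lies in $[(1-\varepsilon)(1-p(A))/\sqrt{2\log(m/n)},\,(1+\varepsilon)/\sqrt{2\log(m/n)}]$ and already supplies the correct main term. The bad-set residual is handled via the circumradius: $\Exp_c[z^*\mathbf{1}_{G^c}]\leq R(A)\,p(A)$, where $R(A):=\sup\{\|x\|:x\in P\}$ equals $1/r(P^\circ)$ by polar duality, with $r(P^\circ)=\inf_{c\in S^{n-1}}\max_i\langle a_i,c\rangle$ the inradius of $P^\circ=\conv(a_1,\ldots,a_m)$. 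To upper-bound $R(A)$ I plan to invoke the classical Gordon/Dvoretzky--Milman-type estimate, which for $m/n\to\infty$ guarantees $r(P^\circ)\geq c_0\sqrt{\log(m/n)}$, hence $R(A)\leq C/\sqrt{\log(m/n)}$, with probability $\geq 1-e^{-cn}$. The residual is then at most $C\,p(A)/\sqrt{\log(m/n)}=n^{-\omega(1)}/\sqrt{\log(m/n)}$, which is negligible next to the main term.

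Collecting the estimates, with probability at least $1-n^{-\omega(1)}$ one obtains $1-2\varepsilon\leq \sqrt{2\log(m/n)}\,\Exp_c z^*(A,c)\leq 1+2\varepsilon$, and Borel--Cantelli completes the argument. The main obstacle I anticipate is precisely the uniform upper bound on the circumradius: the $n^{-\omega(1)}$ tail provided by Theorem~\ref{th main gauss} is too weak to be combined directly with a union bound over an exponential-in-$n$ net on $S^{n-1}$, so the uniform control of $R(A)$ has to come from a separate, sharper estimate on the inradius of random Gaussian polytopes rather than being extracted from Theorem~\ref{th main gauss} alone.
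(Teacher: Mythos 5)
Your proof is correct but takes a genuinely different route from the paper's, at least for the lower bound. The paper obtains $\limsup\sqrt{2\log(m/n)}\,\cw(P)\leq 2$ directly from the outer-radius bound of Proposition~\ref{prop main 3} via $\cw(P)\leq 2R(P)$, while the matching lower bound is proved by contradiction, reusing the Borel--Cantelli/subsequence-extraction scheme from the proof of Corollary~\ref{mw cor lower} to reduce to the pointwise Theorem~\ref{th main lower} for a uniformly random $c$. You instead exploit the rotational invariance of the Gaussian ensemble in full: since Theorem~\ref{th main gauss} gives the same $n^{-\omega(1)}$ tail for every fixed $c$, Fubini converts it into a joint $(A,c)$-tail bound, Markov's inequality then gives $\Prob_c\{|\sqrt{2\log(m/n)}\,z^*-1|>\varepsilon\mid A\}=n^{-\omega(1)}$ for all but a vanishing fraction of $A$, and splitting $\Exp_c\,z^*$ over good and bad $c$-sets (with the bad-set residual controlled by the circumradius times the small conditional failure probability) yields both the lower and upper bounds on $\cw(P)$ in one pass. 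Your argument is cleaner and avoids the indirect contradiction; what the paper's scheme buys in exchange is portability to the non-rotationally-invariant subgaussian setting, where your Fubini step is unavailable. Your anticipated obstacle --- that a uniform circumradius bound cannot be extracted from Theorem~\ref{th main gauss} alone and must come from a separate estimate --- is exactly right, and Proposition~\ref{prop main 3} is precisely the ingredient the paper supplies for it.
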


\subsection{Review of related literature}\label{litreview}

\subsubsection{Convex feasibility problems}

In the setting of Theorem~\ref{th main nongauss}, existence of a feasible
vector $x$ with $\sqrt{2\log(m/n)}\,\langle c,x\rangle \geq 1-\varepsilon$
is equivalent to the statement that the intersection of random convex sets
$$
C_i:=\bigg\{y\in c^\perp:\;\langle\row_i(A),
y\rangle\leq 1
-\frac{1-\varepsilon}{\sqrt{2\log(m/n)}}\langle\row_i(A),c\rangle\bigg\},
\quad 1\leq i\leq m,
$$
is non-empty.
The sets can be naturally viewed as independent random affine subspaces
of the hyperplane $c^\perp$ and, with that interpretation,
the above question is a relative of the {\it spherical perceptron} problem,
in which the object of interest is the intersection of random affine
halfspaces of the form $\{y:\,\langle R_i,y\rangle\leq -\kappa\}$,
for independent random vectors $R_1,\dots,R_m$ and a constant parameter $\kappa\in\R$
(see \cite{Tal1,Tal2} for a comprehensive discussion).
Our proof of the lower bound on $z^*$
(equivalently, showing that $\bigcap_{\,i\leq m} C_i\neq\emptyset$)
is based on an iterative process
which falls under the umbrella of the {\it block-iterative projection methods}. 
%
%

\medskip

The problem of constructing or approximating a vector
in the non-empty intersection of a given collection of convex sets
has been actively studied in optimization literature.
The special setting where the convex sets are affine hyperplanes
or halfspaces,
corresponding to solving systems of linear equations and inequalities,
was first addressed by Kaczmarz \cite{kaczmarz1937angenaherte},
Agmon \cite{Agmon1954}, Motzkin and Schoenberg \cite{motzkin1954relaxation}.
We refer, among others, to papers
\cite{Telgen1982,aharoni1989block,SV2009,EN2011,NT2014,Chubanov2015,MNR2015,GR2015,
LW2016,JJL2017,DLHN2017,BW2018,MISNA2020,NZheng2020,GMMN2021,LGu2021,zhang2021block,morshed2022sampling,ZHanyi2022,ZL2023} for more recent developments and further references.
In particular, the work \cite{aharoni1989block}
introduced the block-iterative projection methods for solving
the feasibility problems, when each update
is computed as a function of projections onto a subset of constraints.

\medskip

In view of the numerous earlier results cited above,
we would like to epmhasize that
the goal of our construction is verifying that the intersection
$\bigcap_{\,i\leq m} C_i$
is non-empty with high probability rather than
approximating a solution to a problem which
is known to be feasible in advance.
Since we aim to get asymptotically sharp bounds on $z^*$,
we have to rely on high-precision analysis of constraint's
violations, not allowing for any significant losses in our estimates.
To our best knowledge, no existing results on the 
block-iterative projection methods addresses the problem considered here.

\subsubsection{Random linear programs}

Linear programs with either the coefficient matrix or
the right hand side generated randomly,
have been actively studied in various contexts:
\begin{itemize}

\item{\it{}Average-case analysis.} As we mentioned above, the setting of random LPs
with the Gaussian coefficient matrix (or, more generally,
a matrix with independent rows having spherically symmetric distributions)
was considered by Borgwardt \cite{B82,B87,B99}
in the context of estimating the average-case
complexity of the shadow simplex method.
The average-case complexity of the Dantzig algorithm was addressed
by Smale in \cite{Smale1983} (see also \cite{Blair1986}).

\item{\it{}Smoothed analysis.} The smoothed analysis of the simplex algorithm,
when the coefficient matrix is a sum of an arbitrary deterministic
matrix and a small Gaussian perturbation,
was first carried out by Spielman and Teng in \cite{ST04},
with improved estimates obtained in later works \cite{VerSim2009,DH18,HLZ23}.

\item{\it{}Distribution of the optimal objective value under small random perturbations.}
The setting when the coefficient matrix, the RHS or the cost vector is subject to small random perturbations was considered, in particular, in
\cite{babbar1955distributions,prekopa1966probability,klatt2022limit,liu2023asymptotic}.

\item{\it{}Random integer feasibility problems.}
We refer to \cite{chandrasekaran2014integer}
for the problem of integer feasibility in the randomized setting.

\item {\it{}The optimal objective value of LPs with random cost vectors \cite{dyer1986linear}.}

\end{itemize}

\subsubsection{The mean width of random polyhedra}

As we have mentioned at the beginning of the introduction,
the mean width of random polytopes has been studied
in the convex geometry literature,
although much of the work deals with the mean width
of the {\it convex hulls} of random vectors
i.e of the polars to the random polyhedra considered in the present work.
We refer, in particular, to \cite{DGT2009,AGP2015,GHT2016} for related results
(see also \cite{Gluskin1986,Gluskin1988}).

\medskip



A two-sided bound on the mean width in the i.i.d subgaussian setting
was derived in \cite{LPRT2005} (see also
\cite{GH2002} for earlier results);
specifically, it was shown that under the assumption that
the entries of $A$ are i.i.d subgaussian variables of zero mean 
and unit variance, the mean width of the
symmetric polyhedron $P=\{x\in\R^n:\;
Ax\leq {\bf 1}\;\mbox{and}\;-Ax\leq {\bf 1}\}$
satisfies
\begin{equation}\label{agkjnglkjnflkjnlkn}
\frac{K_1}{\sqrt{\log(m/n)}}\leq
\cw(P)\leq \frac{K_2}{\sqrt{\log(m/n)}}    
\end{equation}
with high probability,
where the constants $K_1,K_2$ depend on the subgaussian
moment of the matrix entries.
The argument in \cite{LPRT2005}
heavily relies on results which introduce suboptimal constant multiples
to the estimate of $\cw(P)$,
and we believe that it cannot be adapted to derive
an asymptotically sharp version of \eqref{agkjnglkjnflkjnlkn}
which is one of the goals of the present work.

\subsection{Organization of the paper}

We derive Theorem~\ref{th main nongauss} and Corollary~\ref{mw cor nongauss}
as a combination of two separate statements: a lower bound on $z^*$
obtained in Section~\ref{alksjfalkfjlskjnl}, and a matching upper
bound treated in Section~\ref{akjshbfakjhfbkqjwhb}.
The lower bound on $z^*$ (Theorem \ref{th main lower})
is in turn obtained by combining a moderate deviations estimate
from Proposition~\ref{aljhofuhwbfohjbfldsajfbalkj} and an iterative
block-projection algorithm for constructing a feasible solution
which was briefly mentioned before.
The upper bound on $z^*$ (Theorem~\ref{th main upper})
proved in Section~\ref{akjshbfakjhfbkqjwhb} relies on 
moderate deviations Proposition~\ref{aksjfnoifurhfoioin}
and a special version of a covering argument.

\medskip

The Gaussian setting (Theorem~\ref{th main gauss} and Corollary~\ref{mw cor gauss}) is considered in Section~\ref{section Gaussian}.
Finally, results of numerical simulations are presented in Section~\ref{numericssection}.

\medskip

The diagrams below provide a high-level structure of the proofs of the main results
of the paper.

\begin{figure}[h!]
    \centering
    \begin{tikzpicture}[level/.style={sibling distance=80mm/#1 , level distance=20mm}]
        \node (root) at (0,0) {Theorem \ref{th main nongauss}}
            child[edge from parent path={
                (\tikzparentnode.south) .. controls +(0,-1) and +(0,1) .. (\tikzchildnode.north)
            }] {node[text width=8em, align=center] {Lower bound: Theorem \ref{th main lower}}
                child[edge from parent path={
                    (\tikzparentnode.south) .. controls +(0,-1) and +(0,1) .. (\tikzchildnode.north)
                }] {node[text width=10em, align=center] {Moderate deviations: Proposition \ref{aljhofuhwbfohjbfldsajfbalkj}}}
                child[edge from parent path={
                    (\tikzparentnode.south) .. controls +(0,-1) and +(0,1) .. (\tikzchildnode.north)
                }] {node[text width=9em, align=center] {An iterative block projection
                algorithm}}
            }
            child[edge from parent path={
                (\tikzparentnode.south) .. controls +(0,-1) and +(0,1) .. (\tikzchildnode.north)
            }] {node[text width=8em, align=center] {Upper bound: Theorem \ref{th main upper}}
                child[edge from parent path={
                    (\tikzparentnode.south) .. controls +(0,-1) and +(0,1) .. (\tikzchildnode.north)
                }] {node[text width=8em, align=center] {Moderate deviations: Proposition \ref{aksjfnoifurhfoioin}}}
                child[edge from parent path={
                    (\tikzparentnode.south) .. controls +(0,-1) and +(0,1) .. (\tikzchildnode.north)
                }] {node[text width=9em, align=center] {A covering argument and union bound estimate}}
            };
    \end{tikzpicture}
    \caption{Structure of the proof of Theorem \ref{th main nongauss}}
    \label{fig: thm 1.2}
\end{figure}

\begin{figure}[h!]
    \centering
    \begin{tikzpicture}[level/.style={sibling distance=89mm/#1 , level distance=20mm}]
        \node (root) at (0,0) {Theorem \ref{th main gauss}}
            child[edge from parent path={
                (\tikzparentnode.south) .. controls +(0,-1) and +(0,1) .. (\tikzchildnode.north)
            }] {node[text width=8em, align=center] {Lower bound  }
                child[edge from parent path={
                    (\tikzparentnode.south) .. controls +(0,-1) and +(0,1) .. (\tikzchildnode.north)
                }] {node[text width=9em, align=center] {$m=n^{O(1)}$: Theorem \ref{th main lower}}}
                child[edge from parent path={
                    (\tikzparentnode.south) .. controls +(0,-1) and +(0,1) .. (\tikzchildnode.north)
                }] {node[text width=16em, align=center] {$m=n^{\omega(1)}$: standard bounds on $\|\cdot\|_\infty$--norm of a Gaussian vector}}
            }
            child[edge from parent path={
                (\tikzparentnode.south) .. controls +(0,-1) and +(0,1) .. (\tikzchildnode.north)
            }] {node[text width=8em, align=center] {Upper bound  }
                child[edge from parent path={
                    (\tikzparentnode.south) .. controls +(0,-1) and +(0,1) .. (\tikzchildnode.north)
                }] {node[text width=10em, align=center] {Outer radius estimate: 
                Proposition \ref{prop main 3}}}
            };
    \end{tikzpicture}
    \caption{Structure of the proof of Theorem \ref{th main gauss}}
    \label{fig: thm 1.3}
\end{figure}

\section{Preliminaries}

\subsection{Compressible and incompressible vectors}

\begin{definition}[Sparse vectors]
A vector $v$ in $\R^n$ is {\it $s$--sparse} for some $s>0$
if the number of non-zero components in $v$ is at most $s$.
\end{definition}

\begin{definition}[Compressible and incompressible vectors, \cite{RV2008}]
Let $v$ be a unit vector in $\R^n$, and let $\delta,\rho\in(0,1)$.
The vector $v$ is called {\it $(\delta,\rho)$--compressible}
if the Euclidean distance from $v$ to the set of $\delta n$--sparse
vectors is at most $\rho$, that is, if there is a $\delta n$--sparse
vector $z$ such that $\|v-z\|_2\leq \rho$.
Otherwise, $v$ is {\it $(\delta,\rho)$--incompressible}.
\end{definition}
Observe that a vector $v$ is 
$(\delta,\rho)$--compressible if and only if the sum of squares
of its $\lfloor\delta n\rfloor$ largest ({\it{}by the absolute value}) components
is at least $1-\rho^2$.

\subsection{Standard concentration and anti-concentration for
subgaussian variables}

The following result is well
known, see for example \cite[Section~3.1]{VershyninsBook}.
\begin{prop}\label{alkfjbojhrbljafajkn}
Let $V$ be a vector in $\R^k$ with mutually
independent centered $K$--subgaussian components of unit variance.
Then for every $t>0$,
$$
\Prob\big\{
\big|
\|V\|_2-\sqrt{k}\big|\geq t
\big\}\leq 2\exp(-K_1 t^2),
$$
where the constant $K_1>0$ depends only on $K$.
\end{prop}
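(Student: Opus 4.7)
The plan is to reduce the concentration of $\|V\|_2$ around $\sqrt{k}$ to the concentration of the squared norm $\|V\|_2^2=\sum_{i=1}^k V_i^2$ around $k$, and then invoke a standard Bernstein-type inequality for sums of independent sub-exponential summands.

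First I would observe that, since each $V_i$ is centered $K$--subgaussian with unit variance, the random variable $V_i^2-1$ is centered with sub-exponential norm bounded by $K_0 K^2$ for some absolute constant $K_0$. Indeed $\Exp\exp(V_i^2/K^2)\leq 2$ directly controls the moment generating function of $V_i^2$ near zero, which is the defining feature of a sub-exponential variable. The variables $V_i^2-1$ are furthermore mutually independent since the components of $V$ are.

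Next I would apply the standard Bernstein inequality to the sum $S_k:=\|V\|_2^2-k=\sum_{i=1}^k (V_i^2-1)$ to obtain a two-regime tail bound of the form
$$
\Prob\big\{|S_k|\geq u\big\}\leq 2\exp\!\Big(-c\,\min\big(u^2/(kK^4),\,u/K^2\big)\Big),\qquad u>0,
$$
for some absolute constant $c>0$. To pass from $S_k$ to $\|V\|_2-\sqrt{k}$, I would use the elementary identity $S_k=(\|V\|_2-\sqrt{k})(\|V\|_2+\sqrt{k})$ together with the observation that $\|V\|_2+\sqrt{k}\geq \sqrt{k}$ and $\|V\|_2+\sqrt{k}\geq |\|V\|_2-\sqrt{k}|$. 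Consequently, on the event $\{|\|V\|_2-\sqrt{k}|\geq t\}$ one has $|S_k|\geq \max(t\sqrt{k},\,t^2)$. Plugging $u=t\sqrt{k}$ into the Bernstein bound handles the regime $t\leq \sqrt{k}$ and yields a tail of order $\exp(-c\,t^2/K^4)$, while plugging $u=t^2$ handles $t\geq \sqrt{k}$ and again yields an exponent of order $t^2/K^4$ (the linear part $u/K^2=t^2/K^2$ dominates in this regime). Taking the worse of the two constants gives the uniform Gaussian bound $2\exp(-K_1 t^2)$ with $K_1$ depending only on $K$.

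I do not anticipate a real obstacle here: the statement is a textbook fact and all of the ingredients are classical. The only nontrivial bookkeeping is checking that the two regimes in the Bernstein bound combine into a single Gaussian tail valid for \emph{all} $t>0$, which is precisely why one uses $\max(t\sqrt{k},t^2)$ rather than only $t\sqrt{k}$: without the second choice one loses the desired $e^{-K_1 t^2}$ behaviour in the regime $t\gg \sqrt{k}$.
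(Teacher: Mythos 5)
Your proof is correct, and it is exactly the standard argument from the source the paper cites (Vershynin, \emph{High-Dimensional Probability}, Theorem 3.1.1): pass to $\|V\|_2^2-k$, apply Bernstein for sub-exponential sums, and convert back via the factorization $\|V\|_2^2-k=(\|V\|_2-\sqrt{k})(\|V\|_2+\sqrt{k})$ together with $\|V\|_2+\sqrt{k}\geq\max(\sqrt{k},|\|V\|_2-\sqrt{k}|)$. The paper itself gives no proof and simply refers to this reference, so there is nothing further to compare.
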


The next proposition is 
an example of Paley--Zygmund--type inequalities,
and its proof is standard (we provide the
proof for completeness).
\begin{prop}\label{alkjfnbalkfjnselfkjsnflkj}
For any $K>0$ there is $K'>0$ depending only on $K$ with
the following property.
Let $\xi$ be a centered $K$--subgaussian variable of unit variance.
Then 
$$
\Prob\big\{\xi\geq K'\big\}\geq K'.
$$
\end{prop}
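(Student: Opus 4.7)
The plan is to combine a moment-based Paley--Zygmund-style lower bound with the mean-zero condition to extract a one-sided tail estimate. The one-sidedness is the only mild subtlety, since subgaussianity itself is symmetric in spirit and bounds only $\Prob\{|\xi|\geq t\}$; the hypothesis $\Exp\xi=0$ is what allows us to transfer mass to the positive side.

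First I would record the standard fourth-moment bound for subgaussian variables: from the definition $\Exp\exp(\xi^2/K^2)\leq 2$ and the layer-cake formula one obtains $\Exp\xi^4\leq C_1 K^4$ for an absolute constant $C_1$. Combined with $\Exp\xi^2=1$, a single application of H\"older's inequality (writing $\xi^2=|\xi|^{2/3}\cdot|\xi|^{4/3}$ and using exponents $p=3/2$, $q=3$) yields
\[
1=\Exp\xi^2\leq (\Exp|\xi|)^{2/3}\,(\Exp\xi^4)^{1/3},
\]
so that $\Exp|\xi|\geq (\Exp\xi^4)^{-1/2}\geq c_2$ for some $c_2=c_2(K)>0$.

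Next I would use $\Exp\xi=0$, which gives $\Exp\xi^+=\Exp\xi^-=\tfrac{1}{2}\Exp|\xi|\geq c_2/2$. Now $\xi^+$ is a non-negative random variable with $\Exp(\xi^+)^2\leq \Exp\xi^2=1$, so the classical Paley--Zygmund inequality
\[
\Prob\{\xi^+\geq \tfrac{1}{2}\Exp\xi^+\}\geq \tfrac{1}{4}\,\frac{(\Exp\xi^+)^2}{\Exp(\xi^+)^2}
\]
delivers $\Prob\{\xi^+\geq c_2/4\}\geq c_2^2/16$. Because $c_2/4>0$, the event $\{\xi^+\geq c_2/4\}$ coincides with $\{\xi\geq c_2/4\}$, so setting $K':=\min(c_2/4,\,c_2^2/16)$ gives the desired inequality $\Prob\{\xi\geq K'\}\geq K'$, with $K'$ depending only on $K$.

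The only step that requires a bit of care is the transition from a two-sided moment bound to a one-sided probability bound: a pure symmetrization argument does not work since $\xi$ need not be symmetric. The clean way out is the two-step route above, where the mean-zero condition forces $\Exp\xi^+$ to be bounded below, and Paley--Zygmund then extracts a genuine point mass on the positive half-line. All other steps are routine manipulations of subgaussian moments.
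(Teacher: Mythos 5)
Your proof is correct, and it takes a genuinely different route from the paper's argument. Both proofs hinge on the same two ingredients --- a fourth-moment bound from subgaussianity plus the Paley--Zygmund inequality, and the centering assumption $\Exp\xi=0$ --- but the way you combine them is structurally different. The paper applies Paley--Zygmund to $\xi^2$ to obtain a two-sided tail bound $\Prob\{\xi^2\geq 1/2\}\geq 1/(4\Exp\xi^4)$, and then runs a proof by contradiction: it defines a small $\varepsilon$ through an explicit integral condition, assumes $\Prob\{\xi\geq\varepsilon\}<\varepsilon$, bounds $\Exp\,\xi{\bf 1}_{\{\xi\geq 0\}}$ above by truncating the tail, and shows this would force $\Exp\xi<0$. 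You instead go directly: H\"older gives $\Exp|\xi|\geq(\Exp\xi^4)^{-1/2}$ from $\Exp\xi^2=1$, centering splits this as $\Exp\xi^+=\Exp\xi^-=\tfrac12\Exp|\xi|$, and then you apply Paley--Zygmund to the nonnegative variable $\xi^+$ (using $\Exp(\xi^+)^2\leq\Exp\xi^2=1$) to get a one-sided tail bound in one shot. Your version avoids the contradiction structure, avoids the tail-integral bookkeeping, and makes the role of the hypotheses more transparent: H\"older uses unit variance and the fourth moment, centering transfers $L^1$ mass to the positive side, and Paley--Zygmund converts that mass into a point probability. The one place where care is needed --- the passage from a two-sided moment estimate to a one-sided tail bound --- you handle cleanly with the observation $\{\xi^+\geq c_2/4\}=\{\xi\geq c_2/4\}$ for $c_2/4>0$. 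All quantitative steps check out: $\Exp\xi^4\leq C_1K^4$ is standard, the H\"older exponents $(3/2,3)$ balance correctly, and $K':=\min(c_2/4,\,c_2^2/16)$ does give $\Prob\{\xi\geq K'\}\geq K'$.
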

\begin{proof}
Since $\xi$ is $K$--subgaussian, all moments of $\xi$ are bounded, and,
in particular, $\Exp\,\xi^4\leq \tilde K$ for some $\tilde K<\infty$
depending only on $K$.
Applying the Paley--Zygmund inequality, we get
\begin{equation}\label{alfhjbrfjhbkfjabslafj}
\Prob\Big\{\xi\geq \frac{1}{\sqrt{2}}\Big\}
+\Prob\Big\{\xi\leq -\frac{1}{\sqrt{2}}\Big\}=
\Prob\Big\{\xi^2\geq \frac{1}{2}\Big\}\geq \frac{1}{4\,\Exp\xi^4}\geq \frac{1}{4\tilde K}.
\end{equation}
Let $\varepsilon$ be the largest number in $[0,1/\sqrt{2}]$ such that
$$
\varepsilon+\sqrt{\varepsilon}+
2\int\limits_{\varepsilon^{-1/2}}^\infty \exp(-s^2/K^2)\,ds
\leq \frac{1}{16\tilde K}.
$$
We will show by contradiction
that $\Prob\{\xi\geq \varepsilon\}\geq \varepsilon$.
Assume the contrary.
Then
$
\Prob\big\{\xi\geq \frac{1}{\sqrt{2}}\big\}<\frac{1}{16\tilde K},
$
implying, in view of \eqref{alfhjbrfjhbkfjabslafj},
$$
-\Exp\,\xi{\bf 1}_{\{\xi< 0\}}\geq 
\frac{1}{\sqrt{2}}\Prob\Big\{\xi\leq -\frac{1}{\sqrt{2}}\Big\}
>\frac{1}{16\tilde K}.
$$
On the other hand,
\begin{align*}
\Exp\,\xi{\bf 1}_{\{\xi\geq 0\}}
&\leq \varepsilon+\int\limits_{\varepsilon}^{\varepsilon^{-1/2}}\Prob\{\xi\geq s\}\,ds
+
\int\limits_{\varepsilon^{-1/2}}^\infty \Prob\{\xi\geq s\}\,ds\\
&\leq 
\varepsilon+\sqrt{\varepsilon}+
2\int\limits_{\varepsilon^{-1/2}}^\infty \exp(-s^2/K^2)\,ds
\leq \frac{1}{16\tilde K}.
\end{align*}
We conclude that
$
\Exp\,\xi=\Exp\,\xi{\bf 1}_{\{\xi\geq 0\}}+\Exp\,\xi{\bf 1}_{\{\xi< 0\}}<0,
$
leading to contradiction.
\end{proof}
\begin{remark}
Instead of the requirement that the variable $\xi$ is subgaussian
in the above proposition, we could use the assumption $\Exp\,|\xi|^{2+\delta}<\infty$
for a fixed $\delta>0$.
\end{remark}

\subsection{Moderate deviations for linear combinations}

In this section we are interested in upper and lower bounds on probabilities
$$
\Prob\bigg\{\sum_{i=1}^n y_i \xi_i\geq t\bigg\},
$$
where $y=(y_1,\dots,y_n)$ is a fixed vector and $\xi_1,\dots,\xi_n$
are mutually independent centered subgaussian variables of unit variances.
Under certain assumptions on $y$ and $t$, the probability bounds
match (up to $1\pm o(1)$ term in the power of exponent) those in the Gaussian setting.
The estimates below are examples of {\it moderate deviations} results
for sums of mutually independent variables
(see, in particular, \cite[Section~3.7]{DZ1998}), and 
can be obtained with help of standard Bernstein--type bounds and
the change of measure method. However,
we were not able to locate the specific statements that we need in the literature,
and provide proofs for completeness.

\begin{prop}[Moderate deviations: upper bound]\label{aljhofuhwbfohjbfldsajfbalkj}
Let $\delta_n\in(0,1)$, $n\geq 1$, be a sequence of numbers
such that $\lim\limits_{n\to\infty}(\delta_n\, n)=\infty$.
For each $n$, let $c=c(n)$
be a non-random unit vector in $\R^n$,
and assume that for every $\nu>0$ there is $n_0(\nu)<\infty$
such that $c(n)$ is $(\nu^{-1}\delta_n,1-\nu)$--incompressible (in $\R^n$)
for all $n\geq n_0(\nu)$.
Fix any $K\geq 1$, and for each
$n\geq 1$ let $\xi_1^{(n)},\xi_2^{(n)},\dots,\xi_n^{(n)}$
be mutually independent $K$--subgaussian variables of zero mean and
unit variance.
Then for every $\varepsilon>0$ there is $n_1(\varepsilon,K)<\infty$ 
such that
$$
\Prob\bigg\{\sum_{i=1}^n c_i(n)\xi_i^{(n)}
\geq (1+\varepsilon)\sqrt{\delta_n\, n}\bigg\}
\leq \exp\big(-\delta_n\, n/2\big),\quad n\geq n_1(\varepsilon,K).
$$
\end{prop}
\begin{proof}
Throughout the proof, we assume that $K,\varepsilon>0$ are fixed.
Let $\lambda_n:=\sqrt{\delta_n\, n}$.
We have, by Markov's inequality,
\begin{align}\nonumber
\Prob\bigg\{\sum_{i=1}^n c_i(n)\xi_i^{(n)}
\geq (1+\varepsilon)\sqrt{\delta_n\, n}\bigg\}
&=
\Prob\bigg\{\exp\Big(\lambda_n\sum_{i=1}^n c_i(n)\xi_i^{(n)}\Big)
\geq \exp\big(\lambda_n(1+\varepsilon)\sqrt{\delta_n\, n}\big)\bigg\}\\
&\leq \frac{\prod\limits_{i=1}^n \Exp\,\exp(\lambda_n\, c_i(n) \xi_i^{(n)})}
{\exp(\lambda_n(1+\varepsilon)\sqrt{\delta_n\, n})}
\label{eq: ljahbljahbljshbafljh}.
\end{align}
Since $\xi_i^{(n)}$ has zero mean and unit variance,
for every $i\leq n$ we have
\begin{equation}\label{eq: alkjsnflaknalksf}
\Exp\,\exp(\lambda_n \, c_i(n) \xi_i^{(n)})
=1+\frac{1}{2}\lambda_n^2\, \big(c_i(n)\big)^2+\sum\limits_{j=3}^\infty
\frac{\big(\lambda_n\, c_i(n)\big)^j\,\Exp\,\big(\xi_i^{(n)}\big)^j}{j!}.
\end{equation}
Hence, for all $i\leq n$ such that $|\lambda_n\, c_i(n)|\leq 1$,
\begin{equation}\label{eq: kjiurqoijmnda}
\Exp\,\exp(\lambda_n\, c_i(n) \xi_i^{(n)})=1+\frac{1}{2}\lambda_n^2\, c_i(n)^2+O\big(\lambda_n^3 c_i(n)^3\big),
\end{equation}
where the implicit constant in $O(\dots)$ depends on $K$ only.
On the other hand, for every $i\leq n$
we can write (see, in particular, \cite[Section~2.5]{VershyninsBook})
$$
\Exp\,\exp\big(\lambda_n\, c_i(n) \xi_i^{(n)}\big)
\leq \exp\big(\tilde K\lambda_n^2\, c_i(n)^2\big),
$$
for some $\tilde K>0$ depending only on $K$.
Next, by the assumptions on incompressibility
of the cost vectors $c(n)$,
for every $\nu>0$ there is $n_0(\nu)<\infty$
such that for all $n\geq n_0(\nu)$,
the sum of squares of $\lfloor \nu^{-1}\delta_n\,n\rfloor$
largest (by the absolute value) components of $c(n)$
does not exceed $1-(1-\nu)^2$.
Note that every component of $c(n)$
larger than $\big(\lfloor \nu^{-1}\delta_n\,n\rfloor\big)^{-1/2}$
must necessarily be among the $\lfloor \nu^{-1}\delta_n\,n\rfloor$
largest components. Therefore, we obtain
that for all $\nu>0$,
$$
\sum_{i:\,|c_i(n)|^2> (\lfloor\nu^{-1}\delta_n\,n\rfloor)^{-1}}\,c_i(n)^2
\leq 1-(1-\nu)^2,\quad n\geq n_0(\nu).
$$
The last assertion can be written in a more compact form
using asymptotic notation as follows:
there is a sequence of numbers $\kappa_n\in(0,1]$, $n\geq 1$,
converging to zero as $n\to\infty$, such that
$$
\sum_{i:\,|\lambda_n c_i(n)|> \kappa_n} c_i(n)^2=o(1).
$$
Applying \eqref{eq: ljahbljahbljshbafljh}, \eqref{eq: alkjsnflaknalksf}, \eqref{eq: kjiurqoijmnda}, and the last relation,
we then get
\begin{align*}
\Prob&\bigg\{\sum_{i=1}^n c_i(n)\xi_i^{(n)}
\geq (1+\varepsilon)\sqrt{\delta_n\, n}\bigg\}\\
&\leq \frac{\prod\limits_{i:\,|\lambda_n c_i(n)|\leq \kappa_n} 
\big(1+\frac{1}{2}\lambda_n^2\, c_i(n)^2+O(\lambda_n^3\, c_i(n)^3)\big)
\prod\limits_{i:\,|\lambda_n c_i(n)|> \kappa_n} \exp(\tilde K\lambda_n^2 \,c_i(n)^2)}
{\exp(\lambda_n(1+\varepsilon)\sqrt{\delta_n\, n})}\\
&\leq  
 \frac{\exp\Big(\sum\limits_{i:\,|\lambda_n c_i(n)|\leq \kappa_n}\big(\frac{1}{2}\lambda_n^2\, c_i(n)^2+\kappa_n\;O(\lambda_n^2\, c_i(n)^2)\big)\Big)
\exp\Big(\tilde K \sum\limits_{i:\,|\lambda_n c_i(n)|> \kappa_n}\lambda_n^2 \, c_i(n)^2\Big)}
{\exp(\lambda_n(1+\varepsilon)\sqrt{\delta_n\, n})}\\
&=\exp\big(-\lambda_n(1+\varepsilon)\sqrt{\delta_n\, n}\big)
\exp\Big(\frac{1}{2}\lambda_n^2+o(\lambda_n^2)\Big)\\
&=\exp\Big(\big(-\frac{1}{2}-\varepsilon\big)\,\delta_n\, n+o(\delta_n\, n)\Big).
\end{align*}
The result follows.
\end{proof}

\begin{prop}[Moderate deviations: lower bound]\label{aksjfnoifurhfoioin}
Let $\delta_k\in(0,1)$, $k\geq 1$, be a sequence of numbers
such that $\lim\limits_{k\to\infty}(\delta_k\, k)=\infty$.
For each integer $k\geq 1$, let $y(k)$
be a non-random unit vector in $\R^k$,
and assume that $\lim\limits_{k\to\infty}\big(\sqrt{\delta_k\, k}\,\|y(k)\|_\infty\big)=0$.
Fix any $K\geq 1$, and for each
$k\geq 1$, let $\xi_1^{(k)},\xi_2^{(k)},\dots,\xi_k^{(k)}$
be mutually independent $K$--subgaussian variables of zero mean and
unit variance.
Then for every $\varepsilon>0$
there is $k_2(\varepsilon)<\infty$ with  
$$
\Prob\bigg\{\sum_{i=1}^k y_i(k)\xi_i^{(k)}
\geq (1-\varepsilon)\sqrt{\delta_k\, k}\bigg\}
\geq \exp\big(-\delta_k\, k/2\big),\quad k\geq k_2(\varepsilon).
$$
\end{prop}
\begin{proof}
Fix $K$ and $\varepsilon$.
We shall apply the standard technique of change of measure. 
Define $s_k:=(1-\varepsilon/4)\sqrt{\delta_k\, k}$.
For each $i\leq k$, let
\begin{equation}\label{asjhfoufyifuhboahbljhbflj}
B_{i,k}:=\Exp\, \exp(s_k\,y_i(k)\xi_i^{(k)})=1+(1+o(1))\frac{s_k^2\,(y_i(k))^2}{2},
\end{equation}
where the estimate on the right hand side is 
due to the assumptions $s_k\,\|y(k)\|_\infty=o(1)$,
$\Exp\,\xi_i^{(k)}=0$, $\Exp\,\big(\xi_i^{(k)}\big)^2=1$.
For every $i\leq k$, let $Z_i^{(k)}$ be a random variable defined via its
cumulative distribution function
$$
\Prob\big\{Z_i^{(k)}\leq \tau\big\}=\int\limits_{-\infty}^\tau \frac{\exp(s_k z)}{B_{i,k}}
d\mu_{i,k}(z),\quad \tau\in\R,
$$
where $\mu_{i,k}$ is the probability measure on $\R$ induced by $y_i(k)\xi_i^{(k)}$,
and such that $Z_1^{(k)},Z_2^{(k)},\dots,Z_k^{(k)}$
are mutually independent.
Denote by $T_k$ the set of all vectors $(z_1,\dots,z_k)$ in $\R^k$
such that $(1-\varepsilon/2)\sqrt{\delta_k\, k}
>\sum_{i=1}^k z_i> (1-\varepsilon)\sqrt{\delta_k\, k}$.
We have
\begin{align}\nonumber
\Prob&\bigg\{\sum_{i=1}^k y_i(k)\xi_i^{(k)}\geq (1-\varepsilon)\sqrt{\delta_k\, k}\bigg\}\\
&\geq
\int\limits_{T_k}
d\mu_{1,k}(z_1)\dots d\mu_{k,k}(z_k)\nonumber\\
&\geq\Big(\prod_{i=1}^k B_{i,k}\Big)\exp(-s_k(1-\varepsilon/2)\sqrt{\delta_k\, k})
\int\limits_{T_k}
\Big(\prod_{i=1}^k\frac{\exp(s_k z_i)}{B_{i,k}}\Big)\,d\mu_{1,k}(z_1)\dots d\mu_{k,k}(z_k)\nonumber\\
&=\Big(\prod_{i=1}^k B_{i,k}\Big)\exp(-s_k(1-\varepsilon/2)\sqrt{\delta_k\, k})\,
\Prob\big\{(Z_1^{(k)},\dots,Z_k^{(k)})\in T_k\big\}.
\label{eq: lakjbfjfhbwfjhbwfkjhab}
\end{align}
To estimate $\Prob\{(Z_1^{(k)},\dots,Z_k^{(k)})\in T_k\}$, we will compute the means
and the variances of $Z_1^{(k)},\dots,Z_k^{(k)}$.
Using Taylor's expansion of $\exp(s_k z)$, we get
\begin{align*}
\Exp\,Z_i^{(k)}&=\int\limits_{\R}\frac{z\exp(s_k z)}{B_{i,k}}d\mu_{i,k}(z)
=\frac{1}{B_{i,k}}
\int\limits_{\R}\sum_{j=0}^\infty \frac{s_k^j\,z^{j+1}}{j!}
d\mu_{i,k}(z)\\
&=\frac{1}{B_{i,k}}
\sum_{j=0}^\infty \frac{s_k^j\,\Exp\big(y_i(k)\xi_i^{(k)}\big)^{j+1}}{j!}
=\frac{1}{B_{i,k}}\bigg(
s_k\, y_i(k)^2+\sum_{j=2}^\infty\frac{s_k^j\, y_i(k)^{j+1}\,\Exp\,\big(\xi_i^{(k)}\big)^{j+1}}{j!}\bigg)\\
&=\frac{s_k\, y_i(k)^2\,(1+o(1))}{B_{i,k}},
\end{align*}
where for the last identity we again used that $s_k\,\|y(k)\|_\infty=o(1)$, together with the assumption
that $\xi_i^{(k)}$ is subgaussian.
Similarly, we compute
\begin{align*}
\Exp(Z_i^{(k)})^2&=\int\limits_{\R}\frac{z^2\exp(s_k z)}{B_{i,k}}d\mu_{i,k}(z)\\
&=\frac{1}{B_{i,k}}\bigg(
y_i(k)^2+\sum_{j=1}^\infty\frac{s_k^j\, y_i(k)^{j+2}\,\Exp(\xi_i^{(k)})^{j+2}}{j!}\bigg)
=\frac{y_i(k)^2\,(1+o(1))}{B_{i,k}}.
\end{align*}
Thus, in view of \eqref{asjhfoufyifuhboahbljhbflj}
and the definition of $s_k$,
$$
\Exp\,\sum_{i=1}^k Z_i^{(k)}=
(1+o(1))\sum_{i=1}^k\,s_k\, y_i(k)^2=
(1+o(1))\,s_k=
(1-\varepsilon/4+ o(1))\sqrt{\delta_k\, k},
$$
and
$$
\Var\Big(\sum_{i=1}^k Z_i^{(k)}\Big)
=\sum_{i=1}^k \Var\,Z_i^{(k)}
\leq 1+o(1).
$$
It follows from the Markov--Chebyshev inequality 
and the definition of $T_k$
that $\Prob\{(Z_1^{(k)},\dots,Z_k^{(k)})\in T_k\}=1-o(1)$,
and thus, by \eqref{eq: lakjbfjfhbwfjhbwfkjhab},
$$
\Prob\bigg\{\sum_{i=1}^k y_i(k)\xi_i^{(k)}\geq (1-\varepsilon)\sqrt{\delta_k\, k}\bigg\}
\geq (1-o(1)\,\Big(\prod_{i=1}^k B_{i,k}\Big)\exp(-s_k(1-\varepsilon/2)\sqrt{\delta_k\, k}).
$$
To complete the proof, it remains to note that,
in view of \eqref{asjhfoufyifuhboahbljhbflj},
$$
\Big(\prod_{i=1}^k B_{i,k}\Big)
\exp(-s_k(1-\varepsilon/2)\sqrt{\delta_k\, k})
=\exp\Big(\frac{1\pm o(1)}{2}s_k^2-s_k(1-\varepsilon/2)\sqrt{\delta_k\, k}\Big)
= \omega\big(\exp(-\delta_k\, k/2)\big).
$$
\end{proof}

\section{The lower bound on $z^*$ in subgaussian setting}\label{alksjfalkfjlskjnl}

The main result of this section is
\begin{theorem}[Lower bound on $z^*$]\label{th main lower}
Let $K>0$, and let $(m_n)_{n=1}^\infty$
be a sequence of positive integers with 
$\lim\limits_{n\to\infty}\frac{m_n}{n}=\infty$.
For each $n$, let $A=A(n)$ be an
$m_n\times n$ coefficient matrix with 
mutually independent
centered $K$--subgaussian entries of unit variance.
Assume further
that the non-random unit
cost vectors $c=c(n)\in\R^n$ satisfy
\begin{align*}
&\mbox{for every $\kappa>0$ there is $n_0(\kappa)\in\N$
such that for all $n\geq n_0(\kappa)$,}\\
&\mbox{$c=c(n)$ is $(\kappa^{-1}\log(m_n/n)/n,1-\kappa)$--incompressible.}
\end{align*}
Let $z^*=z^*(n)$ be the optimal objective value of random LP \eqref{lp_max}.
Then for every constant $\varepsilon>0$, we have
$$
\Prob\big\{\sqrt{2\log(m_n/n)}\,z^* \geq 1-\varepsilon\big\}
\geq 1-n^{-\omega(1)}.
$$
In particular,
$\liminf\limits_{n\to\infty} \sqrt{2\log(m_n/n)}\,z^* \geq 1$ almost surely.
\end{theorem}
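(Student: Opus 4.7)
The plan is to produce a feasible point of the form $x = \lambda c + y$ with $\lambda := (1-\varepsilon)/\sqrt{2\log(m/n)}$ and $y \in c^\perp$. Denoting $r_i := \langle \row_i(A), c\rangle$ and $\tilde a_i := \row_i(A) - r_i c$ (the orthogonal projection of the $i$-th row of $A$ onto $c^\perp$), the constraints $\langle \row_i(A), x\rangle \leq 1$ read
\[
\langle \tilde a_i, y\rangle \leq 1 - \lambda r_i, \qquad i = 1, \ldots, m,
\]
and the tuples $(r_i, \tilde a_i)$ are mutually independent across $i$. It therefore suffices to construct, with probability $1 - n^{-K'}$, a vector $y \in c^\perp$ satisfying these $m$ half-space constraints.

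\textbf{Moderate deviations and stratification.} Apply Proposition~\ref{aljhofuhwbfohjbfldsajfbalkj} to $r_i = \langle c, \row_i(A)\rangle$ (the incompressibility assumption on $c$ is precisely what the proposition requires): for every threshold $s \geq \sqrt{2\log(m/n)}$ in the relevant range, $\Prob\{r_i \geq s\} \leq \exp(-(1-o(1))s^2/2)$. Combined with a Chernoff-type bound and union bound over $i$, with probability $1-n^{-\omega(1)}$ the index set $\{i : r_i \geq 1/\lambda\}$ decomposes into strata $I_k = \{i : r_i \in [\tau_k, \tau_{k+1})\}$, where $\tau_k = (1+\eta)^k/\lambda$ for a small auxiliary parameter $\eta > 0$; their cardinalities shrink geometrically, and the remaining constraints (with $r_i < 1/\lambda$) are automatically satisfied by $y = 0$.

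\textbf{Iterative block projection.} Build $y$ iteratively via $y_{k+1} = y_k + z_k$, with $y_0 = 0$, where $z_k \in c^\perp$ is a correction that handles the block $I_k$. A natural choice is the orthogonal projection of $y_k$ onto the affine subspace $\{y \in c^\perp : \langle \tilde a_i, y\rangle = 1 - \lambda r_i\text{ for all } i \in I_k\}$; since the $\tilde a_i$ are nearly isotropic in $c^\perp$ with $\|\tilde a_i\|^2 \approx n$, and $|I_k| \ll n$, this projection has the explicit form $z_k = -\sum_{i \in I_k} \mu_i^{(k)} \tilde a_i$ with coefficients $\mu_i^{(k)}$ controlled by the residual slacks $1 - \lambda r_i - \langle \tilde a_i, y_k\rangle$. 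Under the stratification above, $\|z_k\|$ decays geometrically in $k$, so the total correction $y = \sum_k z_k$ is well-defined and bounded.

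\textbf{Main obstacle.} The principal difficulty is to control cross-block interference: one must ensure that for every $j \in [m]$, $\langle \tilde a_j, \sum_k z_k\rangle \leq 1 - \lambda r_j$, i.e.\ that corrections made for one stratum do not spoil constraints in another. For $j \notin I_k$, the vector $\tilde a_j$ is (essentially) independent of $z_k$ and so $\langle \tilde a_j, z_k\rangle$ concentrates around zero with fluctuations of order $\|z_k\|$; for $j \in I_k$, one must control the off-diagonal terms $\sum_{i \in I_k,\, i \neq j}\langle \tilde a_j, \tilde a_i\rangle$ against the diagonal $\|\tilde a_j\|^2 \approx n$. Hanson--Wright/Hoeffding-type concentration for bilinear forms in subgaussian variables, together with a careful union bound over $(j, k)$ that consumes the $n^{-\omega(1)}$ probability slack, should yield the needed uniform control. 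Summing the geometric series and verifying that the aggregate damage leaves non-negative slack on every constraint then gives feasibility of $x = \lambda c + y$ and hence $\sqrt{2\log(m/n)}\,z^* \geq 1-\varepsilon$ with the claimed probability; this cross-interference estimate is where essentially all the technical work resides.
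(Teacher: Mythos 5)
Your high-level strategy --- write $x=\lambda c+y$ with $y\in c^\perp$ and build $y$ by iterative block corrections, using Proposition~\ref{aljhofuhwbfohjbfldsajfbalkj} to control the initially-violated set --- matches the paper's skeleton. But the step you defer to the last paragraph, controlling $\langle\tilde a_j, z_k\rangle$ across blocks, is precisely where the paper's one essential new device lives, and your sketch of it would not go through as written. The correction $z_k$ is a data-dependent vector: it is built from $\{\tilde a_i\}_{i\in I_k}$ via a least-squares system whose right-hand side encodes all earlier corrections. It therefore ranges over an uncountable, $A$-measurable family, and one cannot simply invoke Hanson--Wright plus a union bound over $(j,k)$: there is no finite, non-random index set over which to union, and no clean independence between $\tilde a_j$ and $z_k$. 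The paper's fix is a covering argument. Lemma~\ref{kgjnovubpogiwpi} supplies, for each candidate block $I$, a fixed net $\Net_I\subset\R^I$ of size at most $K_1^{|I|}$; the residual on the current block is \emph{majorized coordinate-wise} by a net vector $u\in\Net_I$, the correction is defined from that net vector rather than from the raw residual, and the good event $\Event_I$ requires the desired constraint-count and magnitude estimates to hold simultaneously over all $u\in\Net_I$. Because $|\Net_I|\le K_1^{|I|}$ is matched by the per-event failure probability $(|I|/m)^{2|I|}$ of Corollary~\ref{akgjrbgouerhfbiewufbhifhu}, the union bound over all blocks $I$ with $\log n\le|I|\le k_0$ closes. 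This discretization is the missing idea in your proposal; without it there is no rigorous justification for the claim that $\langle\tilde a_j,z_k\rangle$ ``concentrates around zero with fluctuations of order $\|z_k\|$'' uniformly over the corrections that actually occur in the iteration.

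Two secondary points. Your stratification is static --- $I_k$ is fixed in advance by the value of $r_i$ --- whereas the paper's is dynamic: at stage $h$ it takes the $k_h=\lfloor k_{h-1}/2\rfloor$ indices with the \emph{currently} largest residuals of $Ax_h$. The static scheme leaves no home for constraints with $r_j$ just below the threshold $1/\lambda$, which lie in no $I_k$ but may be pushed into violation by the corrections; the dynamic rule automatically sweeps such indices into the next block, and the paper's slack budget, $1-\varepsilon/2+\sum_h(3/4)^{h-1}\varepsilon/16+\varepsilon/16\le 1$, is organized around exactly this property. Finally, the paper splits off the regime $m=n^{\omega(1)}$ up front and settles it with no iteration at all (a scalar multiple of $c$ is itself feasible with high probability, via a direct bound on $\|Ac\|_\infty$); the iterative argument is run only under $m=n^{O(1)}$, an assumption used repeatedly when verifying that the per-step damage $\tilde K\sqrt{k_h\log(m/k_h)}/\sqrt n$ stays below $(3/4)^h\varepsilon/16$. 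Your proposal never makes this case distinction and would need to.
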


\begin{remark}
Although the assumptions on the cost vectors in Theorem~\ref{th main lower}
may appear excessively technical, we conjecture that they are
optimal in the following sense: whenever there is $\kappa>0$
such that $c(n)$ is $(o(\log(m_n/n)/n),1-\kappa)$--compressible
for infinitely many $n\in\N$, there is $K>0$ (independent of $n$)
and a sequence of random
matrices $A(n)$ with mutually independent centered entries
of unit variances and subgaussian moments bounded by $K$,
such that
$\liminf\limits_{n\to\infty} \sqrt{2\log(m_n/n)}\,z^* < 1$ almost surely.
\end{remark}

\begin{remark}
As a simple sufficient condition on $c(n)$ to satisfy assumptions
of the above theorem, one may require that
$\|c(n)\|_\infty
=o\big(\log^{-1/2}(m_n/n)\big)=0$.
\end{remark}

We refer to Section~\ref{secstructural} for numerical experiments
regarding the assumptions on the cost vectors.

\medskip

As we mentioned in the introduction, 
the main idea of our proof of Theorem~\ref{th main lower}
is to construct a feasible solution $x$ to the LP \eqref{lp_max}
satisfying $\sqrt{2\log(m_n/n)}\,\langle c,x\rangle \geq 1-\varepsilon$
via an iterative process. 
We start by defining a vector $x_0$
as the $(1-\varepsilon)(2\log(m_n/n))^{-1/2}$
multiple of the cost vector $c$. 
Our goal is to add a perturbation to $x_0$ which would restore
its feasibility.
We shall select a vector $x_1\in c^\perp$
which would repair the constraints violated by $x_0$, i.e $\langle \row_i(A),x_0+x_1\rangle\leq 1$
for all $i\in[m_n]$ with $\langle \row_i(A),x_0\rangle> 1$.
The vector $x_1$ may itself introduce some constraints' violation; 
to repair those constraints, we will find another vector $x_2\in c^\perp$,
and consider $x_0+x_1+x_2$, etc. 
At $h$--th iteration, the vector $x_h$ will be chosen
as a linear combination of the matrix rows having large
scalar products with $x_{h-1}$.
The process continues for some finite number of steps $s$
which depends on the values of $m_n$ and $n$. We shall prove that the resulting vector
$x:=x_0+x_1+\dots+x_s$ is feasible with high probability.
To carry this argument over, we start with a number of
preliminary estimates concerning subgaussian
vectors as well as certain estimates on the condition numbers
of random matrices.

\bigskip

We write $[m]$ for the set of integers $\{1,2,\dots,m\}$.
Further, given a finite subset of integers $I$, we write $\R^I$
for the linear real space of $|I|$--dimensional
vectors with components indexed over $I$.
Given a set or an event $S$, we will write $\chi_S$
for the indicator of the set/event.

\subsection{Auxiliary results}

Proposition \ref{alkfjbojhrbljafajkn} and a simple union bound argument yields
\begin{corollary}\label{akgjrbgouerhfbiewufbhifhu}
For every $K\geq 1$ there is a number $K_1=K_1(K)>0$
depending only on $K$ with the following property.
Let $k\leq m/e$, 
let $I\subset[m]$ be a $k$--set, and let $V$
be a random vector in $\R^{[m]\setminus I}$ with mutually
independent centered $K$--subgaussian components of unit variance.
Then the event
\begin{align*}
\bigg\{
&\mbox{The number of indices $i\in [m]\setminus I$ with $V_i\geq
K_1\sqrt{\log (m/k)}$ is at most $k/2$, and }\\
&\Big(\sum_{i\in [m]\setminus I}V_i^2\,\chi_{\{V_i\geq
K_1\sqrt{\log (m/k)}\}}\Big)^{1/2}\leq K_1\sqrt{k\log (m/k)}
\bigg\}
\end{align*}
has probability as least $1-2\big(\frac{k}{m}\big)^{2k}$.
\end{corollary}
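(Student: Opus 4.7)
The plan is to combine the single-coordinate subgaussian tail with a union bound over index subsets of $[m]\setminus I$. Set $L := K_1\sqrt{\log(m/k)}$, and let $K_2=K_2(K)>0$ denote the constant from Proposition~\ref{alkfjbojhrbljafajkn}. The $K$-subgaussian property yields the per-coordinate estimate
\[
\Prob\{V_i \geq L\} \leq 2\exp\bigl(-K_1^2\log(m/k)/K^2\bigr) = 2(k/m)^{K_1^2/K^2},
\]
and $K_1$ will be chosen large (in terms of $K$ and $K_2$) so that both union-bound estimates below close; the hypothesis $k\leq m/e$ is used only to ensure $\log(m/k)\geq 1$.

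For the first bullet, the event $\#\{i:V_i\geq L\}>k/2$ entails the existence of some $J\subset[m]\setminus I$ with $|J|=\lceil k/2\rceil$ on which every coordinate of $V$ exceeds $L$. Independence across coordinates gives
\[
\Prob\bigl\{\exists J,\,|J|=\lceil k/2\rceil,\,V_i\geq L\text{ for all } i\in J\bigr\} \leq \binom{m-k}{\lceil k/2\rceil}\bigl(2(k/m)^{K_1^2/K^2}\bigr)^{\lceil k/2\rceil}.
\]
Estimating $\binom{m-k}{\lceil k/2\rceil}\leq (2em/k)^{k/2}$ and taking $K_1^2/K^2$ large enough shows that this is at most $(k/m)^{2k}$.

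For the second bullet, note that on the complement of the event above, the set $J^\ast:=\{i:V_i\geq L\}$ has size at most $\lceil k/2\rceil$, and $\sum_{i\in J^\ast}V_i^2=\|V|_{J^\ast}\|_2^2$. It therefore suffices to show, with the desired probability, that $\|V|_J\|_2\leq K_1\sqrt{k\log(m/k)}$ holds \emph{simultaneously} for every $J\subset[m]\setminus I$ of size at most $\lceil k/2\rceil$. For each such $J$, since $\sqrt{|J|}\leq\sqrt{k/2}\leq \tfrac{1}{2}K_1\sqrt{k\log(m/k)}$, Proposition~\ref{alkfjbojhrbljafajkn} applied to the subvector $V|_J$ gives
\[
\Prob\bigl\{\|V|_J\|_2\geq K_1\sqrt{k\log(m/k)}\bigr\} \leq 2\exp\bigl(-\tfrac{1}{4}K_2K_1^2\,k\log(m/k)\bigr),
\]
and a union bound over the at most $(2em/k)^{k/2}$ admissible $J$ loses only a factor $(m/k)^{O(k)}$, which is absorbed provided $K_1$ is sufficiently large in terms of $K_2$.

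There is no substantive obstacle here: both union-bound penalties are of the form $(m/k)^{O(k)}$ while the tail gains are of the form $(k/m)^{\Omega(K_1^2 k)}$, so the entire calibration reduces to fixing one constant $K_1=K_1(K)$ large enough that both estimates close at once. Adding the two failure probabilities yields the asserted bound $1-2(k/m)^{2k}$.
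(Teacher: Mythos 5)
Your proof is correct and follows essentially the same strategy as the paper's: set $L=K_1\sqrt{\log(m/k)}$, use a per-coordinate subgaussian tail plus a union bound over $\lceil k/2\rceil$-subsets to control the number of large coordinates, and then apply Proposition~\ref{alkfjbojhrbljafajkn} together with a second union bound over subsets to control the restricted Euclidean norm. The only differences are cosmetic (you range over subsets of size at most $\lceil k/2\rceil$ where the paper fixes size exactly $\lfloor k/2\rfloor$ and implicitly extends $J^\ast$ to that size, and you invoke the raw subgaussian tail rather than an intermediate $\exp(-K_2 L^2)$), and neither affects the calibration of $K_1$.
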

\begin{proof}
Set
$$
L:=K_1\sqrt{\log (m/k)},
$$
where $K_1=K_1(K)\geq 1$ is chosen sufficiently large
(the specific requirements on $K_1$ can be extracted from
the argument below).
Since the vector $V$ is subgaussian,
every component of $V$ satisfies
$$
\Prob\big\{V_i\geq L\big\}\leq \exp(-K_2 L^2)
$$
for some $K_2>0$ depending only on $K$.
Hence, the probability that more than $k/2$ components of $V$
are greater than $L$, can be estimated from above by
\begin{equation}\label{argkjbnglkjnfalkfmns}
{m\choose \lceil k/2\rceil}\exp(- K_2 L^2\,\lceil k/2\rceil)
\leq 
\bigg(\frac{em}{\lceil k/2\rceil}\bigg)^{\lceil k/2\rceil}
\exp(-K_2 L^2\,\lceil k/2\rceil).
\end{equation}
If $K_1$ is chosen so that $\frac{em}{\lceil k/2\rceil}
\exp(-K_2 L^2)\leq \big(\frac{k}{m}\big)^4$ then
the last expression in \eqref{argkjbnglkjnfalkfmns} is bounded 
above by
$\big(\frac{k}{m}\big)^{2k}$.

Similarly to the above and assuming $K_1$
is sufficiently large, in view of Proposition~\ref{alkfjbojhrbljafajkn},
for any choice of a $\lfloor k/2\rfloor$--subset
$J\subset[m]\setminus I$, 
the probability that the vector $(V_i)_{i\in J}$
has the Euclidean norm greater than $L\sqrt{k}$, is bounded 
above by
$$
{m\choose \lfloor k/2\rfloor}^{-1}\Big(\frac{k}{m}\Big)^{2k}.
$$
Combining the two observations, we get the result.
\end{proof}

\bigskip

Our next observation concerns random matrices with subgaussian entries.
Recall that the largest and smallest singular
values of an $n\times k$ ($k\leq n$) matrix $M$
are defined by
$$
s_{\max}(M)=\|M\|:=\sup\limits_{y:\,\|y\|_2=1}\|My\|_2,
$$
and
$$
s_{\min}(M):=\inf\limits_{y:\,\|y\|_2=1}\|My\|_2.
$$

\begin{prop}\label{aliuhfpoijfnijn}
For every $K>0$ there are $K_1,K_2>0$ depending only on $K$
with the following property.
Let $\frac{n}{k}\geq K_1$ and let $M$ be an $n\times k$
random matrix with mutually independent centered subgaussian entries
of unit variance and subgaussian moment at most $K$.
Further, let $P$ be a non-random $n\times n$
projection operator of rank $n-1$ i.e $P$ is the orthogonal
projection onto an $(n-1)$--dimensional subspace of $\R^n$.
Then 
$$
\Prob\bigg\{\frac{s_{\max}(PM)}{s_{\min}(PM)}\leq 2\quad
\mbox{and}\quad s_{\min}(PM)\geq \sqrt{n}/2\bigg\}
\geq 1-2\exp(-K_2\,n).
$$
\end{prop}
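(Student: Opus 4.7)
The plan is to reduce the question to a standard singular-value concentration bound for tall random matrices with independent isotropic subgaussian columns. I would write $P=U^{\top}U$ where $U$ is an $(n-1)\times n$ matrix with orthonormal rows spanning $\mathrm{Im}(P)$, so that $UU^{\top}=I_{n-1}$. Then $UM$ is an $(n-1)\times k$ random matrix whose singular values coincide with those of $PM$ (since $U^{\top}$ is an isometry on $\mathrm{Im}(P)$), and the problem reduces to controlling $s_{\min}(UM)$ and $s_{\max}(UM)$ around $\sqrt{n-1}$.

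First, I would verify that $UM$ is an admissible input to the standard framework. Its columns are mutually independent (inherited from the columns of $M$), each of the form $U m_j$ where $m_j\in\R^n$ has independent centered unit-variance $K$--subgaussian entries. Each such column is centered, isotropic (since $UU^{\top}=I_{n-1}$), and remains $K$--subgaussian as a vector: for any unit $y\in\R^{n-1}$ the marginal $\langle y,Um_j\rangle=\langle U^{\top}y,m_j\rangle$ is a weighted sum of $K$--subgaussian variables with coefficients of Euclidean norm one, hence $K$--subgaussian with the same parameter up to a universal constant.

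Next I would invoke the standard singular-value estimate for such matrices, obtained via an $\varepsilon$--net argument on the unit sphere of $\R^k$ combined with a Bernstein-type bound for the quadratic form $\|UMy\|_2^2-(n-1)$: for every $t\geq 0$,
$$
\Prob\big\{\sqrt{n-1}-C(\sqrt{k}+t)\leq s_{\min}(UM)\leq s_{\max}(UM)\leq \sqrt{n-1}+C(\sqrt{k}+t)\big\}\geq 1-2\exp(-c t^2),
$$
with $C,c>0$ depending only on $K$. Choosing $K_1$ so large that $C\sqrt{k}\leq \sqrt{n}/8$ whenever $n/k\geq K_1$, and setting $t=\sqrt{n}/(8C)$, I would obtain $s_{\max}(UM)/s_{\min}(UM)\leq 2$ and $s_{\min}(UM)\geq \sqrt{n}/2$ simultaneously for all sufficiently large $n$, with the failure probability of this event bounded by $2\exp(-K_2 n)$ for $K_2=c/(64 C^2)$. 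This yields the bound in the proposition (read as an estimate on the probability that the stated event is violated).

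The main obstacle is keeping the exponent in the failure probability linear in $n$ while allowing only the \emph{columns} of $UM$ (not the entries) to be independent. Because the rows of $UM$ are generally dependent, one must apply a column-version concentration result rather than an off-the-shelf bound for matrices with i.i.d.\ entries, and track constants so that the $e^{O(k)}$ cost of the $\varepsilon$--net union bound is dominated by the $e^{-\Theta(n)}$ pointwise probability. The hypothesis $n/k\geq K_1$ with $K_1$ depending on $K$ is exactly what makes this balance possible.
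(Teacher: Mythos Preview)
Your approach is correct and close in spirit to the paper's: both rest on an $\varepsilon$--net over $S^{k-1}$ together with pointwise concentration of $\|PMy\|_2$ around $\sqrt{n}$, and both use the hypothesis $n/k\ge K_1$ to absorb the $e^{O(k)}$ cost of the net into an $\exp(-\Omega(n))$ failure probability. The difference lies only in how the pointwise concentration is obtained. You factor $P=U^\top U$ and reduce to the $(n-1)\times k$ matrix $UM$ with independent isotropic subgaussian columns, then appeal to a column-version singular-value bound---equivalently, a Hanson--Wright--type inequality for the quadratic form $(My)^\top P(My)$, since the coordinates of $UMy$ are not themselves independent. The paper instead works directly with $PMy$: letting $w$ be the unit vector spanning $\ker P$, it uses
\[
\|My\|_2-|\langle w,My\rangle|\le\|PMy\|_2\le\|My\|_2
\]
and controls the two pieces separately---$\|My\|_2$ via norm concentration for vectors with independent subgaussian entries, and the scalar $\langle w,My\rangle$ via a one-dimensional subgaussian tail. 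This is slightly more elementary, exploiting the rank-one structure of $I-P$ rather than any quadratic-form concentration; your packaging via $UM$ is cleaner and, once the column-version result is in hand, shorter. Either route delivers the same conclusion.
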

\begin{proof}
We will assume that the aspect ratio $K_1$
is sufficiently large (the value can be extracted from the proof below).
Let $w$ be the unit vector
in the null space of $P$ (note that it is determined uniquely up to changing the sign).

Fix any unit vector $y\in \R^k$.
Then the random vector $My$ has mutually independent
components of unit variance, and is subgaussian, with
the subgaussian moment of the components depending only on $K$
(see \cite[Section~2.6]{VershyninsBook}).
Applying Proposition~\ref{alkfjbojhrbljafajkn}, we get
\begin{equation}\label{aljghbergojehgbolej}
\Prob\big\{
\big|
\|My\|_2-\sqrt{n}\big|\geq t
\big\}\leq 2\exp(-\tilde K t^2),\quad t>0,
\end{equation}
where $\tilde K>0$ only depends on $K$.
Since the projection operator $P$ acts as a contraction
we have $\|PMy\|_2\leq \|My\|_2$ everywhere on the probability space.
On the other hand, 
$\|PMy\|_2\geq \|My\|_2-|\langle w,My\rangle|$,
where, again according to \cite[Section~2.6]{VershyninsBook},
$\langle w,My\rangle$ is a subgaussian random variable
with the subgaussian moment only depending on $K$, and therefore
\begin{equation}\label{alskfjgnrlgklrekgengl}
    \Prob\big\{
\|My\|_2-\|PMy\|_2\geq t
\big\}\leq 2\exp(-\hat K t^2),\quad t>0.
\end{equation}
Combining \eqref{aljghbergojehgbolej} and \eqref{alskfjgnrlgklrekgengl},
we get
$$
\Prob\big\{
\big|
\|PMy\|_2-\sqrt{n}\big|\geq t
\big\}\leq 2\exp(-K_3 t^2),\quad t>0,
$$
for some $K_3>0$ depending only on $K$.

The rest of the proof of the proposition is based on the standard covering argument
(see, for example, \cite[Chapter~4]{VershyninsBook}),
and we only sketch the idea. 
Consider a {\it Euclidean $\delta$--net $\Net$}
on $S^{k-1}$ (for a sufficiently small constant $\delta\in(0,1/2]$) i.e
a non-random discrete subset of the Euclidean sphere such that for every vector $u$
in $S^{k-1}$ there is some vector in $\Net$ at distance at most $\delta$ from $u$.
It is known that $\Net$ can be chosen to have cardinality at most
$\big(\frac{2}{\delta}\big)^k$ \cite[Section~4.2]{VershyninsBook}.
In view of the last probability estimate, the event
$$
\big\{\|PMy\|_2-\sqrt{n}\big|\leq 0.1\sqrt{n}\mbox{ for all }y\in\Net\big\}
$$
has probability at least $1-2\big(\frac{2}{\delta}\big)^k\exp(-K_3 n/100)$.
Everywhere on that event, we have (see \cite[Section~4.4]{VershyninsBook})
$$
s_{\max}(PM)=\|PM\|\leq\frac{1}{1-\delta}\max\limits_{y\in\Net}\|PMy\|_2
\leq \frac{1.1\sqrt{n}}{1-\delta},
$$
and 
$$
s_{\min}(PM)\geq \min\limits_{y\in\Net}\|PMy\|_2-\delta\|PM\|
\geq 0.9\sqrt{n}-\frac{1.1\delta\sqrt{n}}{1-\delta},
$$
and, in particular,
$$
\frac{s_{\max}(PM)}{s_{\min}(PM)}\leq 2\quad
\mbox{and}\quad s_{\min}(PM)\geq \sqrt{n}/2,
$$
assuming that $\delta$ is a sufficiently small positive constant.
It remains to note that, as long as the aspect ratio $K_1$
is sufficiently large, the quantity
$2\big(\frac{2}{\delta}\big)^k\exp(-K_3 n/100)$
is exponentially small in $n$.
\end{proof}
\begin{remark}
It can be shown that $\frac{s_{\max}(PM)}{s_{\min}(PM)}\leq
1+\varepsilon$ and $s_{\min}(PM)\geq (1-\varepsilon)\sqrt{n}$
with probability exponentially close to one for arbitrary constant $\varepsilon>0$
as long as $K_1=K_1(\varepsilon)$ is sufficiently large.
\end{remark}

\begin{corollary}\label{aklrjgnroigoini}
For every $K>0$ there are $K_1,K_2>0$ depending only on $K$
with the following property.
Let $\frac{n}{k}\geq K_1$, and let 
$P:\R^n\to\R^n$ be a non-random orthogonal
projection of rank $n-1$.
Further, let 
$V_{1},V_{2},\dots,V_{k}$ be independent random vectors in $\R^{n}$
with mutually independent centered $K$--subgaussian components
of unit variance.
Let $U$ be any non-random vector in $\R^k$.
Then with probability $1-\exp(-K_2 n)$
there is the unique vector $y$ in the linear
span of $P(V_{1}),P(V_{2}),\dots,P(V_{k})$ such that
\begin{equation}\label{alfhbrefojlhbvlajhb}
\langle V_{j},y\rangle = U_j,\quad 1\leq j\leq k, 
\end{equation}
and, moreover, that vector satisfies $\|y\|_2\leq 4\|U\|_2/\sqrt{n}$.
\end{corollary}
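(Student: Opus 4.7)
The plan is to reduce the corollary to the well-conditioning statement of Proposition \ref{aliuhfpoijfnijn}. Let $M$ denote the $n\times k$ random matrix whose columns are $V_1,\dots,V_k$. Since $n/k$ is sufficiently large and the $V_j$'s have mutually independent centered $K$--subgaussian entries of unit variance, Proposition \ref{aliuhfpoijfnijn} applies to $PM$, and (after absorbing a factor of $2$ into the constant $K_2$) the good event
$$
\mathcal{G} := \bigl\{s_{\min}(PM)\geq \sqrt{n}/2 \ \text{and} \ s_{\max}(PM)\leq 2\,s_{\min}(PM)\bigr\}
$$
occurs with probability at least $1-\exp(-K_2 n)$. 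I will verify the entire statement deterministically on $\mathcal{G}$, so the probabilistic content is entirely contained in the proposition.

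On $\mathcal{G}$, I would parametrize any candidate vector in the span of $P(V_1),\dots,P(V_k)$ as $y=PMz$ for some $z\in\R^k$, and rewrite the $k$ scalar conditions $\langle V_j,y\rangle=U_j$ as the single matrix equation $M^\top y = U$. Because $P$ is an orthogonal projection, so that $P^\top=P=P^2$, and because $y=PMz\in\mathrm{Image}(P)$, we may compute
$$
M^\top y \;=\; M^\top P\,y \;=\; M^\top P M z \;=\; (PM)^\top(PM)\,z,
$$
so the problem becomes the symmetric $k\times k$ linear system $(PM)^\top(PM)\,z = U$. On $\mathcal{G}$, the smallest eigenvalue of $(PM)^\top(PM)$ equals $s_{\min}(PM)^2 \geq n/4 > 0$, hence the system has a unique solution $z$, yielding a unique $y=PMz$ in the prescribed span and solving \eqref{alfhbrefojlhbvlajhb}.

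For the norm estimate I would bound
$$
\|y\|_2 \;\leq\; s_{\max}(PM)\,\|z\|_2 \;\leq\; \frac{s_{\max}(PM)}{s_{\min}(PM)^2}\,\|U\|_2 \;\leq\; \frac{2}{s_{\min}(PM)}\,\|U\|_2 \;\leq\; \frac{4\,\|U\|_2}{\sqrt{n}},
$$
applying the two inequalities defining $\mathcal{G}$ in succession. There is no real obstacle: Proposition \ref{aliuhfpoijfnijn} carries all the heavy lifting, and the remainder is a short linear-algebra reduction. The only point requiring a little care is that the equations are taken against the full vectors $V_j$, while $y$ is constrained to lie in the span of the $P(V_j)$'s; the identity $M^\top y = (PM)^\top y$ (valid precisely because $y\in\mathrm{Image}(P)$) is what converts the system into the well-posed invertible one on the coordinate vector $z$.
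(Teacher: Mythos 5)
Your proof is correct and follows essentially the same route as the paper's: both set $M$ to be the matrix with columns $V_j$, reduce the constraints to the normal-equation system $(PM)^\top(PM)z=U$, set $y=PMz$, and bound $\|y\|_2$ via $s_{\max}(PM)/s_{\min}(PM)^2$ using Proposition~\ref{aliuhfpoijfnijn}. The only difference is presentational: you spell out the identity $M^\top y=(PM)^\top y$ for $y\in\mathrm{Image}(P)$ and the uniqueness argument (injectivity of $z\mapsto PMz$ when $s_{\min}(PM)>0$), which the paper leaves implicit.
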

\begin{proof}
Denote by $M$ the $n\times k$ matrix
with columns $V_{1},V_{2},\dots,V_{k}$, and let
the random vector $v$ in $\R^k$ satisfy $(PM)^\top PMv=U$.
We then set $y:=PMv$. Note that $y$ is 
in the linear
span of $P(V_{1}),P(V_{2}),\dots,P(V_{k})$
and satisfies \eqref{alfhbrefojlhbvlajhb}.
Further, we can write
$$
\|y\|_2\leq s_{\max}(PM)\,\|v\|_2
\leq \frac{s_{\max}(PM)\,\|U\|_2}{s_{\min}((PM)^\top PM)}
= \frac{s_{\max}(PM)\,\|U\|_2}{s_{\min}(PM)^2}.
$$
It remains to apply Proposition~\ref{aliuhfpoijfnijn}
to get the result.
\end{proof}

The next lemma provides a basis for a discretization argument
which we will apply in the proof of the theorem.
\begin{lemma}\label{kgjnovubpogiwpi}
There is a universal constant $K_1\geq 1$ with the following property.
Let $k\geq 1$. Then there is a set $\Net_k$ of vectors
in $\R^k$ satisfying all of the following:
\begin{itemize}
    \item All vectors in $\Net_k$ have non-negative components taking values in
    $\frac{1}{\sqrt{k}}\N_0$;
    \item The Euclidean norm of every vector in $\Net_k$ is in the interval $[1,2]$;
    \item For every unit vector $u\in \R^k$ there is a vector $y=y(u)\in\Net_k$
    such that $u_j\leq y_j$, $j\leq k$;
    \item The size of $\Net_k$ is at most $(K_1)^k$.
\end{itemize}
\end{lemma}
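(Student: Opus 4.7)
The plan is to construct $\Net_k$ explicitly by coordinate-wise rounding of the positive part of $u$ up to the next multiple of $1/\sqrt{k}$, with a small modification to force a lower bound on the Euclidean norm. Given a unit vector $u\in\R^k$, define $y=y(u)\in\R^k$ by
$$
y_j:=\max\Big(\tfrac{1}{\sqrt{k}},\;\tfrac{1}{\sqrt{k}}\lceil \sqrt{k}\,u_j^+\rceil\Big),\qquad j=1,\dots,k,
$$
where $u_j^+=\max(u_j,0)$. By construction, each $y_j$ lies in $\tfrac{1}{\sqrt{k}}\N_0$, is at least $\tfrac{1}{\sqrt{k}}$, and satisfies $u_j\leq u_j^+\leq y_j\leq u_j^++\tfrac{1}{\sqrt{k}}$. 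Then set $\Net_k:=\{y(u):u\in S^{k-1}\}$.

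The first three bullet points are then immediate: components lie on the desired grid and dominate $u$ componentwise, and the norm lower bound $\|y\|_2\geq 1$ follows from $y_j\geq 1/\sqrt{k}$. For the upper bound, I would expand
$$
\|y\|_2^2\leq \sum_{j=1}^k\Big(u_j^++\tfrac{1}{\sqrt{k}}\Big)^2 = \|u^+\|_2^2+\tfrac{2}{\sqrt{k}}\|u^+\|_1+1,
$$
and use Cauchy--Schwarz $\|u^+\|_1\leq \sqrt{k}\,\|u^+\|_2\leq \sqrt{k}$ together with $\|u^+\|_2\leq 1$ to conclude $\|y\|_2^2\leq 4$, so $\|y\|_2\in[1,2]$.

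The main remaining task is the cardinality estimate, which I would handle by a standard volumetric argument. Every $y\in \Net_k$ equals $n/\sqrt{k}$ for some $n\in\N_0^k$ with $\|n\|_2\leq 2\sqrt{k}$. The translated unit cubes $n+[0,1]^k$ are pairwise disjoint and all lie inside the Euclidean ball of radius $2\sqrt{k}+\sqrt{k}=3\sqrt{k}$, so the number of such lattice points is bounded by $\mathrm{vol}(B(0,3\sqrt{k}))=\pi^{k/2}(3\sqrt{k})^k/\Gamma(k/2+1)$. Stirling's formula reduces this to $K_1^k$ for a universal constant $K_1$ (one can take $K_1$ slightly larger than $3\sqrt{2\pi e}$).

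I do not expect any real obstacle here: the construction is forced once one realises that the norm constraint must be enforced by hand in the degenerate case $u^+=0$, and the cardinality bound is the standard computation that the number of lattice points in a ball of radius $O(\sqrt{k})$ in $\R^k$ is exponential in $k$. The only small point to be careful about is that the same $y$ can arise from many different $u$'s, but this only helps with the cardinality bound, not with the existence of a dominating element, and so plays no role in the argument.
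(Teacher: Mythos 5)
Your proof is correct and takes essentially the same approach as the paper: round each coordinate up to the grid $\tfrac{1}{\sqrt{k}}\N_0$ and bound $|\Net_k|$ by a standard lattice-point/volume comparison. The only small difference is that the paper rounds $|u_j|$ rather than $u_j^+$, which makes $\|y\|_2\ge\|u\|_2=1$ automatic and avoids the need for the $\max$ with $1/\sqrt{k}$; both choices are valid.
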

\begin{proof}
Define $\Net_k$ as the collection of all vectors in $\R^k$ with non-negative
components taking values in $\frac{1}{\sqrt{k}}\N_0$ and with the Euclidean
norm in the interval $[1,2]$.
Observe that for every unit vector $u\in \R^k$, by defining $y$ coordinate-wise as
$$
y_j:=\frac{1}{\sqrt{k}}\,\big\lceil\sqrt{k}|u_j|\big\rceil,\quad j\leq k,
$$
we get a vector with Euclidean norm in the interval $[1,2]$,
i.e a vector from $\Net_k$. Thus, $\Net_k$ constructed this way satisfies the first
three properties listed above. It remains to verify the upper bound
on $|\Net_k|$.

To estimate the size of $\Net_k$ we apply the standard volumetric argument.
Observe that $|\Net_k|$ can be bounded from above by the number of integer lattice
points in the rescaled Euclidean ball $2\sqrt{k}B_2^k$ of radius $2\sqrt{k}$.
That, in turn, is bounded above by the maximal number of non-intersecting parallel
translates of the unit cube $(0,1)^k$ which can be
placed inside the ball $3\sqrt{k}B_2^k$. Comparison of the Lebesgue volumes
of $3\sqrt{k}B_2^k$ and $(0,1)^k$ confirms that the latter is
of order $\exp(O(k))$, completing the proof.
\end{proof}

\bigskip

Equipped with Corollaries~\ref{akgjrbgouerhfbiewufbhifhu}
and~\ref{aklrjgnroigoini}, and Lemma~\ref{kgjnovubpogiwpi},
we can complete the proof of the theorem.

\subsection{Proof of Theorem~\ref{th main lower}}


First, suppose that $m_n=n^{\omega(1)}$.
Observe that in this case $\sqrt{2\log (m_n/n)}=
(1-o(1))\sqrt{2\log m_n}$.
Fix a small constant $\varepsilon>0$.
Define $\delta_n:=2(1+\varepsilon/2)(\log m_n)/n$,
so that by the assumptions of the theorem,
for every constant $\kappa\in(0,1)$ and all large enough $n$, the cost vectors 
$c=c(n)$ are  $(\kappa^{-1}\delta_n,1-\kappa)$--incompressible.
Hence, applying Proposition~\ref{aljhofuhwbfohjbfldsajfbalkj},
we get for all large enough $n$ and every $i\leq m_n$,
$$
\Prob\big\{(Ac)_i\geq (1+\varepsilon/2)\sqrt{\delta_n n}\big\}
\leq \exp(-\delta_n n/2),
$$
implying
$$
\Prob\big\{\|Ac\|_\infty\geq (1+\varepsilon/2)\sqrt{\delta_n n}\big\}
\leq m_n\exp(-\delta_n n/2)=n^{-\omega(1)}.
$$
We infer that, by considering
$x=x(n):=\frac{c}{(1+\varepsilon/2)\sqrt{2(1+\varepsilon/2)\log m_n}}$,
the optimal objective value of the LP satisfies
$$
\sqrt{2\log (m_n/n)}\,z^*\geq
\frac{\sqrt{2\log (m_n/n)}}{(1+\varepsilon/2)\sqrt{2(1+\varepsilon/2)\log m_n}}
> 1-\varepsilon
$$
with probability $1-n^{-\omega(1)}$,
and the statement of the theorem follows.

In view of the above, to complete the proof
it is sufficient to verify the statement under the assumption $m_n=n^{O(1)}$.
In what follows, we assume that
$m_n\leq n^{K'}$ for some number $K'>0$ independent of $n$,
and $\varepsilon>0$ is a small constant.

\bigskip

We shall construct a ``good'' event of high probability, condition on a realization
of the matrix $A=A(n)$ from that event, and then follow the iterative scheme
outlined at the beginning of the section to construct a feasible solution.
Set
$$
x_0=x_0(n):=\frac{(1-\varepsilon)c}{\sqrt{2\log (m_n/n)}},
$$
and define an integer $k_0=k_0(n)$ as
$$
k_0:=\big\lceil 
m_n(n/m_n)^{1+\varepsilon/4}
\big\rceil=\big\lceil n(n/m_n)^{\varepsilon/4}\big\rceil.
$$

\bigskip

{\bf{}Definition of a ``good'' event.}
The event to be conditioned on is constructed as an intersection of multiple events.
We start by defining
\begin{align*}
\Event_c:=\big\{
&\mbox{At most $k_0$ components of the vector $Ax_0$ are greater than
$1-\varepsilon/2$,}\\
&\mbox{and the sum of squares of those components
is bounded above by $\tilde K^2\,k_0$}
\big\},
\end{align*}
where $\tilde K>0$ is a sufficiently large universal constant
whose value can be extracted from the argument below.
Observe that, by our definition of $k_0$
and by our assumption that $\varepsilon$ is small,
for all large enough $n$, 
$$
(1+\varepsilon/16)
\sqrt{2(1+\varepsilon/16)\log (m_n/k_0)}\leq \frac{1-\varepsilon/2}{1-\varepsilon}
\sqrt{2\log (m_n/n)},
$$
whence for every $i\leq m_n$, in view of the definition of $x_0$,
\begin{equation}\label{eq:oajnkjnflakjnflak}
\Prob\big\{(Ax_0)_i\geq 1-\varepsilon/2\big\}
\leq \Prob\big\{(Ac)_i\geq (1+\varepsilon/16)
\sqrt{2(1+\varepsilon/16)\log(m_n/k_0)}\big\}.
\end{equation}
Define $\delta_n:=2(1+\varepsilon/16)\log(m_n/k_0)/n$,
and observe that, by the assumptions of the theorem,
for every constant $\kappa\in(0,1)$ and all large enough $n$, the cost vectors 
$c(n)$ are  $(\kappa^{-1}\delta_n,1-\kappa)$--incompressible.
Hence, we can apply Proposition~\ref{aljhofuhwbfohjbfldsajfbalkj}
to get for all large enough $n$,
$$
\Prob\big\{(Ac)_i\geq (1+\varepsilon/16)\sqrt{\delta_n n}\big\}
\leq \exp\big(-\delta_n n/2\big).
$$
In particular, in view of \eqref{eq:oajnkjnflakjnflak}, the probability that $(Ax_0)_i\geq 1-\varepsilon/2$ for more than
$k_0$ indices $i$, is bounded above by
$$
{m_n\choose k_0}\exp\big(-\delta_n n k_0/2\big)
\leq \bigg(\frac{em_n\exp(-\delta_n n/2)}{k_0}\bigg)^{k_0}
=
\bigg(e\,\Big(\frac{k_0}{m_n}\Big)^{\varepsilon/16}\bigg)^{k_0}.
$$
On the other hand, for every $t\geq \frac{2\sqrt{k_0}}{\sqrt{2\log(m_n/n)}}$,
by Proposition~\ref{alkfjbojhrbljafajkn}
and by the union bound over $k_0$--subsets of $[m_n]$,
the probability that the sum of squares of largest $k_0$ components
of $Ax_0$ is greater than $t^2$, is bounded above by
$$
{m_n\choose k_0}\cdot 2\exp(-K_3 t^2\log(m_n/n))
\leq 2\,\Big(\frac{em_n}{k_0}\Big)^{k_0}\exp(-K_3 t^2\log(m_n/n)),
$$
where $K_3>0$ is a universal constant. Taking $t$
to be a sufficiently large constant multiple of $\sqrt{k_0}$ and
combining the above estimates, we obtain that the event
$\Event_c$ has probability $1-n^{-\omega(1)}$.

\bigskip

Fix for a moment any subset $I\subset[m_n]$ of size at most $k_0$
and at least $\log n$. Further, denote by $\Net_I$
the non-random discrete subset of vectors in $\R^I$ with the properties
listed in Lemma~\ref{kgjnovubpogiwpi}.
Denote by $P=P(n):\R^n\to\R^n$ the projection onto the orthogonal complement of
the cost vector $c=c(n)$.
For every $u\in \Net_I$, define
\begin{align*}
\Event_{I,u}
:=\bigg\{
&\mbox{There is a unique vector $y$
in the linear
span of $P(\row_i(A))$, $i\in I$,}\\
&\mbox{such that
$\langle y,\row_i(A)\rangle=u_i$, $i\in I$; moreover, that vector satisfies:}\\
&\mbox{(a) $\|y\|_2\leq 8/\sqrt{n}$;}\\
&\mbox{(b) number of indices $i\in [m_n]\setminus I$ with $(Ay)_i\geq
K''\sqrt{\log (m_n/|I|)}/\sqrt{n}$ is at most $|I|/2$;}\\
&(c)\;\Big(\sum_{i\in [m_n]\setminus I} (Ay)_i^2\,\chi_{\{(Ay)_i\geq
K''\sqrt{\log (m_n/|I|)}/\sqrt{n}\}}\Big)^{1/2}
\leq K''\sqrt{|I|\log (m_n/|I|)}/\sqrt{n}
\bigg\},
\end{align*}
where $K''=K''(K)>0$ is a sufficiently large constant depending only on $K$.
We claim that, assuming $n$ is sufficiently large, the event $\Event_{I,u}$
has probability close to one. Indeed, since the collections of random vectors
$\{\row_i(A)\}_{i\in I}$ and $\{\row_i(A)\}_{i\in [m_n]\setminus I}$ are independent,
we have
\begin{align*}
\Prob(\Event_{I,u})
\geq \Prob\bigg\{
&\mbox{There is a unique $y\in{\rm span}\,\{P(\row_i(A)),\,i\in I\}$}\\
&\mbox{such that
$\langle y,\row_i(A)\rangle=u_i$, $i\in I$; and $\|y\|_2\leq 8/\sqrt{n}$}\bigg\}\cdot
\inf\limits_y\Prob\big\{\mbox{$y$ satisfies (b) and (c)}\big\},
\end{align*}
where the infimum is taken over all non-random vectors $y\in\R^n$
satisfying $\|y\|_2\leq 8/\sqrt{n}$.
Using that $\|u\|_2\leq 2$
for every $u\in \Net_I$ and 
applying Corollary~\ref{aklrjgnroigoini} to the first probability on the right-hand side,
we get for all large enough $n$
$$
\Prob(\Event_{I,u})
\geq
\big(1-\exp(-\Omega(n))\big)\;
\inf\limits_y\Prob\big\{\mbox{$y$ satisfies (b) and (c)}\big\}.
$$
Further, applying Corollary~\ref{akgjrbgouerhfbiewufbhifhu}
to the second probability, we obtain that the event
$\Event_{I,u}$ has probability at least 
$$
\big(1-\exp(-\Omega(n))\big)\;\big(1-2(|I|/m_n)^{2|I|}\big)
\geq
1-2(|I|/m_n)^{2|I|}-\exp(-\Omega(n)),$$
whence the intersection of events $\bigcap\limits_{I\in \mathcal I}\bigcap\limits_{u\in\Net_I}\Event_{I,u}$
taken over the collection $\mathcal I$ of all
subsets of $[m_n]$ of size at least $\log n$
and at most $k_0$, has probability $1-n^{-\omega(1)}$.

\bigskip

Define the ``good'' event
$$
\Event_{good}:=\Event_c\;\cap\;\bigcap\limits_{I\in \mathcal I}\bigcap\limits_{u\in\Net_I}\Event_{I,u},
$$
so that $\Prob(\Event_{good})=1-n^{-\omega(1)}$.
Our goal is to show that, conditioned on any realization of $A$
from $\Event_{good}$, the optimal objective value of the LP
is at least $\frac{1-\varepsilon}{\sqrt{2\log (m_n/n)}}$. From now on, we assume that the matrix $A$
is non-random and satisfies the conditions from the definition of $\Event_{good}$.
For reader's convenience, we outline an algorithm for constructing a feasible solution
$x$ certifying the aforementioned lower bound on $z^*$:

\bigskip

\begin{algorithm}[H]
    \SetAlgoLined
    \KwIn{ $A$: ${m_n \times n}$ non-random matrix satisfying all the conditions defined in $\Event_{good}$.} 
    \KwOut{ $x$: feasible solution to LP \eqref{lp_max} satisfying $\langle x, c \rangle \geq  \frac{1-\varepsilon}{\sqrt{2\log (m_n/n)}}$.}
    $k_0 \leftarrow \big\lceil m_n(n/m_n)^{1+\varepsilon/4} \big\rceil$;\\
    $x_0 \leftarrow \frac{(1-\varepsilon) c}{\sqrt{2\log (m_n/n)}}$;\\
    $I_0 \leftarrow$  $k_0$--subset of $[m_n]$ corresponding to $k_0$ largest components of the vector $Ax_0$;\\
    $z_0\in\R^{I_0} \leftarrow$ the restriction of the vector $Ax_0$ to $\R^{I_0}$;\\
    $u_0\in\Net_{I_0} \leftarrow$  a vector which majorizes $z_0/\|z_0\|_2$ coordinate-wise; ($\Net_{I_0}$ is the discrete subset of $\R^{I_0}$ in  Lemma \ref{kgjnovubpogiwpi}) \\
    $y_1 \leftarrow$ the unique vector in the linear combination of $P(\row_i(A))$, $i\in I_0$ ($y_1$ orthogonal to $c$) such that
    $\langle y_1,\row_i(A)\rangle=(u_0)_i, i\in I_0$; ($P$ is orthogonal projection to $c^{\perp}$)\\
    $x_1 \leftarrow -\|z_0\|_2\,y_1$;\\
    $s \leftarrow \lceil \log_2 k_0-\frac{1}{4}\log_2 n\rceil$;\\
    \For{$1 \leq h\leq s-1$ }{ 
    $k_{h} \leftarrow \lfloor k_{h-1}/2\rfloor$;\\
    $I_h \leftarrow $  $k_h$--subset of $[m_n]\setminus I_{h-1}$ corresponding to $k_h$ largest components of $Ax_h$;\\
    $z_{h}\in\R^{I_{h}} \leftarrow$ the restriction of the vector $Ax_{h}$ to $\R^{I_{h}}$;\\
    $u_{h}\in\Net_{I_{h}} \leftarrow$  a vector which majorizes $z_{h}/\|z_{h}\|_2$ coordinate-wise;\\ 
    $y_{h+1} \leftarrow$ the unique vector in the linear span of $P(\row_i(A))$, $i\in I_h$,
    such that $\langle \row_i(A), y_{h+1}\rangle = (u_h)_i, i\in I_h$;\\
    $x_{h+1} \leftarrow -\|z_{h}\|_2\,y_{h+1} $;\\
    }
    $x \leftarrow x_0+x_1+\dots+x_s $;\\
    return $x$;\\
    \caption{Finding a feasible solution to LP \eqref{lp_max}}
    \label{alg proof 3.1}
\end{algorithm}

\bigskip

Having outlined the algorithm, we proceed with a formal verification of its correctness.

{\bf{}Definition of $x_1$.}
Let $I_0$ be the $k_0$--subset of $[m_n]$
corresponding to $k_0$ largest components of the vector $Ax_0$
(we resolve ties arbitrarily).
In view of our conditioning on $\Event_c$, for every $i\in[m_n]\setminus I_0$
we have
$
\langle \row_i(A),x_0\rangle \leq 1-\varepsilon/2.
$
Denote by $z_0\in\R^{I_0}$ the restriction of the vector $Ax_0$
to $\R^{I_0}$, and let $u_0\in\Net_{I_0}$ be
a vector which majorizes $z_0/\|z_0\|_2$ coordinate-wise.
Note that in view of the definition of $\Event_c$, the Euclidean norm of $z_0$
is of order $O(\sqrt{k_0})$.
Further, since we conditoned on $\Event_{I_0,u_0}$, we find a unique vector $y_1$ in the linear combination
of $P(\row_i(A))$, $i\in I_0$ (and, in particular, $y_1$ orthogonal to $c$) such that
$$\langle y_1,\row_i(A)\rangle=(u_0)_i,\quad i\in I_0,$$ 
satisfying the conditions
(a), (b), (c) from the definition of $\Event_{I_0,u_0}$.
Define 
$$x_1:=-\|z_0\|_2\,y_1,$$ 
so that
$\langle \row_i(A),x_1\rangle=-\|z_0\|_2\,(u_0)_i$, $i\in I_0$.
Observe that with such a definition of $x_1$, the sum $x_0+x_1$ satisfies
\begin{equation}
\label{eq: aljhbkajhfbjqhfblajhbfksjah}
\langle \row_i(A),x_0+x_1\rangle
=(z_0)_i-\|z_0\|_2\,(u_0)_i
\leq 0,\quad i\in I_0,
\end{equation}
whereas 
$$\langle x_0+x_1,c\rangle =\langle x_0,c\rangle =
\frac{1-\varepsilon}{\sqrt{2\log(m_n/n)}}.$$
Applying the definition of the event $\Event_{I_0,u_0}$
and the bound $\|z_0\|_2=O(\sqrt{k_0})$, we get that
the vector $x_1$ satisfies
\begin{align*}
&\mbox{The number of indices $i\in [m_n]\setminus I_0$ with
$(Ax_1)_i\geq \tilde K\sqrt{(k_0/n)\log (m_n/k_0)}$ is at most $k_0/2$;}\\
&\Big(\sum_{i\in [m_n]\setminus I_0} (Ax_1)_i^2\,\chi_{\{(Ax_1)_i\geq
\tilde K\sqrt{(k_0/n)\log (m_n/k_0)}\}}\Big)^{1/2}
\leq \tilde K\sqrt{(k_0^2/n)\log (m_n/k_0)},
\end{align*}
where $\tilde K=\tilde(K)>0$ depends only on $K$.
We will simplify the conditions on $x_1$. First, observe that
in view of the definition of $k_0$ and since $m_n=n^{O(1)}$, we can assume that
$\tilde K\sqrt{(k_0/n)\log (m_n/k_0)}\leq\varepsilon/16$.
Further, we can estimate $\tilde K\sqrt{(k_0^2/n)\log (m_n/k_0)}$
from above by $\sqrt{\lfloor k_{0}/2\rfloor}$. Thus, the vector $x_1$ satisfies
\begin{align}
&\mbox{The number of indices $i\in [m_n]\setminus I_0$ with
$(Ax_1)_i\geq \varepsilon/16$ is at most $\lfloor k_{0}/2\rfloor$;}\nonumber\\
&\Big(\sum_{i\in [m_n]\setminus I_0} (Ax_1)_i^2\,\chi_{\{(Ax_1)_i\geq
\varepsilon/16\}}\Big)^{1/2}
\leq \sqrt{\lfloor k_{0}/2\rfloor}.\label{eq:lkjbljhfbwjfhbqwjf}
\end{align}

\bigskip

{\bf{}Definition of $x_2,x_3,\dots$}
The vectors $x_2,x_3,\dots$ are constructed via an inductive process.
Let 
$$s:=\lceil \log_2 k_0-\frac{1}{4}\log_2 n\rceil.$$
We define numbers 
$k_1,k_2,\dots,k_{s}$ recursively by the formula
$$
k_{h}:=\lfloor k_{h-1}/2\rfloor,\quad 1\leq h\leq s.
$$
With $n$ large, we then have
$$
k_{s}=n^{1/4\pm o(1)}.
$$
Further, assuming that for some $1\leq h\leq s-1$ the vector $x_h$ has been defined, we let $I_h$
be a $k_h$--subset of $[m_n]\setminus I_{h-1}$ corresponding to $k_h$ largest 
components of $Ax_h$ (with ties resolved arbitrarily).

\medskip

Let $1\leq h\leq s-1$, and assume that a vector $x_{h}$
has been constructed that satisfies
\begin{align*}
&\mbox{The number of indices $i\in [m_n]\setminus I_{h-1}$ with
$(Ax_h)_i\geq (3/4)^{h-1}\varepsilon/16$ is at most $k_h$;}\\
&\Big(\sum_{i\in [m_n]\setminus I_{h-1}} (Ax_h)_i^2\,\chi_{\{(Ax_h)_i\geq
(3/4)^{h-1}\varepsilon/16\}}\Big)^{1/2}
\leq \sqrt{k_{h}}
\end{align*}
(observe that $x_1$ satisfies the above assumptions, in view of \eqref{eq:lkjbljhfbwjfhbqwjf}, which
provides a basis of induction).
We denote by $z_{h}\in\R^{I_{h}}$
the restriction of the vector $Ax_{h}$
to $\R^{I_{h}}$ (note that $z_h$ has the Euclidean norm
at most $\sqrt{k_{h}}$ by the induction hypothesis),
and let $u_{h}\in\Net_{I_{h}}$ be
a vector which majorizes $z_{h}/\|z_{h}\|_2$ coordinate-wise.
Similarly to the above, we let 
$y_{h+1}$ be the unique
vector in the linear span of $P(\row_i(A))$, $i\in I_h$,
such that 
$$\langle \row_i(A), y_{h+1}\rangle = (u_h)_i,\quad
i\in I_h,$$
and satisfying the conditions (a), (b), (c)
from the definition of $\Event_{I_h,u_h}$,
and define 
\begin{equation}\label{eq: alkdhjbfkajhfbksjfhbskj}
x_{h+1}:=-\|z_{h}\|_2\,y_{h+1},      
\end{equation}
so that
\begin{equation}\label{eq: alkjbljhfbofhubfojhbaljf}
\langle \row_i(A), x_{h+1}\rangle = -\|z_{h}\|_2\,(u_h)_i,\quad i\in I_h.
\end{equation}
By the definition of $\Event_{I_h,u_h}$,
the vector $Ax_{h+1}$ satisfies
\begin{equation*}
\begin{split}
&\mbox{Number of indices $i\in [m_n]\setminus I_h$ with $(A x_{h+1})_i\geq
\tilde K\sqrt{k_{h}\log (m_n/k_h)}/\sqrt{n}$ is at most $k_{h+1}$;}\\
&\Big(\sum_{i\in [m_n]\setminus I_h} (Ax_{h+1})_i^2\,\chi_{\{(Ax_{h+1})_i\geq
K''\sqrt{k_{h}\log (m_n/k_h)}/\sqrt{n}\}}\Big)^{1/2}
\leq \tilde K\sqrt{k_{h}^2\log (m_n/k_h)}/\sqrt{n},
\end{split}
\end{equation*}
for some constant $\tilde K=\tilde K(K)>0$.
In is not difficult to check that, with our definition of $k_1,k_2,\dots$
and the assumption $m_n=n^{O(1)}$
and as long as $n$ is large,
the quantity $\tilde K\sqrt{k_{h}\log (m_n/k_h)}/\sqrt{n}$
is (much) less than $(3/4)^{h}\varepsilon/16$; further,
$\tilde K\sqrt{k_{h}^2\log (m_n/k_h)}/\sqrt{n}$ is dominated by $\sqrt{k_{h+1}}$.
Thus, $x_{h+1}$ satisfies
\begin{align}
&\mbox{The number of indices $i\in [m_n]\setminus I_h$ with $(A x_{h+1})_i\geq
(3/4)^{h}\varepsilon/16$ is at most $k_{h+1}$;}\label{eq:lakjenflkjfnlkajfnals}\\
&\Big(\sum_{i\in [m]\setminus I_h} (Ax_{h+1})_i^2\,\chi_{\{(Ax_{h+1})_i\geq
(3/4)^{h}\varepsilon/16\}}\Big)^{1/2}
\leq 
\tilde K\sqrt{k_{h}^2\log (m_n/k_h)}/\sqrt{n}
\leq \sqrt{k_{h+1}},\label{apijnogijnrgoi}
\end{align}
completing the induction step.

\smallskip

Next, we gather the properties of the constructed sets $I_h$ and vectors $x_h$
that will be useful to us.
Observe that, in view of \eqref{eq: alkjbljhfbofhubfojhbaljf} and the
condition that $u_{h}\in\Net_{I_{h}}$ majorizes $z_{h}/\|z_{h}\|_2$ coordinate-wise
(see also \eqref{eq: aljhbkajhfbjqhfblajhbfksjah}), we have
\begin{equation}
\label{eq: lkajfh98752987}
\langle \row_i(A),x_h+x_{h+1}\rangle\leq 0, \quad 0\leq h\leq s-1,\quad i\in I_h
\end{equation}
(implying that constraints violated by $x_h$ are repaired by $x_{h+1}$)
and, as a weaker property that follows directly from \eqref{eq: alkdhjbfkajhfbksjfhbskj},
\begin{equation}
\label{eq: aljkhebfkjhfbkjfhbaksj}
\langle \row_i(A),x_{h+1}\rangle\leq 0,\quad
0\leq h\leq s-1,\quad i\in I_h.
\end{equation}
Further, since by the construction for every $1\leq h\leq s$,
the number of indices $i\in [m_n]\setminus I_{h-1}$ with
$(Ax_h)_i\geq (3/4)^{h-1}\varepsilon/16$ is at most $k_h$ (see \eqref{eq:lakjenflkjfnlkajfnals}), we infer that
$$
\langle \row_i(A),x_{h}\rangle< (3/4)^{h-1}\varepsilon/16,\quad 1\leq h\leq s-1,\quad i\in [m_n]\setminus I_{h}.
$$
Finally, the condition \eqref{apijnogijnrgoi} for $h+1=s$
reads
\begin{align*}
\Big(\sum_{i\in [m_n]\setminus I_{s-1}} (Ax_{s})_i^2\,\chi_{\{(Ax_{s})_i\geq
(3/4)^{s-1}\varepsilon/16\}}\Big)^{1/2}
\leq
\tilde K\sqrt{k_{s-1}^2\log (m_n/k_{s-1})}/\sqrt{n}=o(1),
\end{align*}
so that, in particular, 
\begin{equation}
\label{eq: aljhsbfkajshbfla3452jkfbls}
\langle \row_i(A),x_{s}\rangle\leq \varepsilon/16,\quad i\in[m_n]\setminus I_{s-1}.    
\end{equation}

\bigskip

{\bf{}Restoring feasibility.}
The above process produced vectors $x_0,x_1,\dots,x_s$.
Define the sum
$$
x:=x_0+x_1+\dots+x_s.
$$
By the construction, every $x_h$, $1\leq h\leq s$,
lies in the orthogonal complement of $c$,
and hence
$$\langle x, c\rangle=\langle x_0, c\rangle
=
\frac{1-\varepsilon}{\sqrt{2\log(m_n/n)}}.$$

\bigskip

To complete the proof that $z^*\geq \frac{1-\varepsilon}{\sqrt{2\log(m_n/n)}}$, it remains to verify that $x$ is a feasible solution to the linear program.
For any index $i\in[m_n]$,
let 
\begin{itemize}
\item $\chi_0(i)$ be the Boolean indicator of the expression
``$\langle \row_i(A),x_0\rangle\geq 1-\varepsilon/2$'';
\item for each $1\leq h\leq s-1$, let $\chi_h(i)$ be the indicator of 
``$\langle \row_i(A),x_h\rangle\geq (3/4)^{h-1}\varepsilon/16$'';
\item set $\chi_s(i):=0$.
\end{itemize}

Note that, in view of the definition of the sets $I_0,I_1,\dots$,
for every $0\leq h\leq s-1$ we have $\chi_h(i)=1$ {\it only if} $\chi_{h+1}(i)=0$.
Indeed, the condition $\chi_h(i)=1$ implies that $i\in I_h$,
so that \eqref{eq: aljkhebfkjhfbkjfhbaksj} yields $\chi_{h+1}(i)=0$.

\smallskip

From now on, we fix $i\in[m_n]$.
The set $\{0,1,\dots,s\}$
can be partitioned into consecutive subsets $S_1<S_2<\dots<S_p$
in such a way that
\begin{itemize}
    \item The size of each set $S_r$ is either $1$ or $2$;
    \item For each singleton $S_r=\{h\}$, $\chi_h(i)=0$;
    \item For each subset $S_r=\{h,h+1\}$ of size two, $\chi_h(i)=1$ and $\chi_{h+1}(i)=0$.
\end{itemize}
We have
$$
\langle \row_i(A),x\rangle=\sum_{r=1}^p \sum_{h\in S_r}
\langle \row_i(A),x_h\rangle.
$$
Fix for a moment any $r\leq p$ and consider
the corresponding sum $\sum_{h\in S_r}
\langle \row_i(A),x_h\rangle$. 
First, assume that $S_r$ is a singleton; $S_r=\{h\}$.
By our construction of the partition, we have $\chi_h(i)=0$.
\begin{itemize}
\item If $h=0$ then
$$
\langle \row_i(A),x_h\rangle\leq 1-\varepsilon/2;
$$
\item If $1\leq h\leq s-1$ then
$$
\langle \row_i(A),x_h\rangle\leq (3/4)^{h-1}\varepsilon/16;
$$
\item If $h=s$ then, in view of \eqref{eq: aljkhebfkjhfbkjfhbaksj} and \eqref{eq: aljhsbfkajshbfla3452jkfbls}
$$
\langle \row_i(A),x_h\rangle\leq \varepsilon/16.
$$
\end{itemize}
Next, assume that the size of $S_r$ is two; $S_r=\{h,h+1\}$.
Since in this case $\chi_h(i)=1$, we have $i\in I_h$ and, by \eqref{eq: lkajfh98752987},
$$
\langle \row_i(A),x_h+x_{h+1}\rangle\leq 0.
$$

\medskip

Combining the two possible cases and summing over $r$, we get
$$
\langle \row_i(A),x\rangle
\leq 1-\varepsilon/2
+\sum_{h=1}^\infty (3/4)^{h-1}\varepsilon/16
+\varepsilon/16\leq 1,
$$
and the proof is finished.

\subsection{Applications of Theorem~\ref{th main lower}}\label{structcostv}

As a simple sufficient condition on the cost vectors 
which implies the assumptions in Theorem~\ref{th main lower},
we consider a bound on the $\|\cdot\|_\infty$--norms
which, in particular, implies the lower bound in the main Theorem~\ref{th main nongauss}:

\begin{corollary}[Lower bound on $z^*$ under sup-norm delocalization
of the cost vectors]\label{linfty norm cor}
Let $m_n$ satisfy $\lim\limits_{n\to\infty}\frac{m_n}{n}=\infty$,
and let matrices $A=A(n)$ be as in Theorem~\ref{th main lower}.
Consider a sequence of non-random cost vectors $c=c(n)$
satisfying $\lim_{n \rightarrow \infty}\sqrt{\log (m_n/n)}\,\|c\|_\infty=0$.
Then for any constant $\varepsilon>0$ and all sufficiently
large $n$,
$$
\Prob\big\{\sqrt{2\log(m_n/n)}\,z^* \geq 1-\varepsilon\big\}
\geq 1-n^{-\omega(1)},
$$
implying that
$\liminf\limits_{n\to\infty} \sqrt{2\log(m_n/n)}\,z^* \geq 1$ almost surely.
\end{corollary}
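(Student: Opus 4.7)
The plan is to deduce this corollary directly from Theorem~\ref{th main lower}; the only new ingredient is verifying that the sup-norm decay hypothesis $\sqrt{\log(m/n)}\,\|c\|_\infty\to 0$ implies the incompressibility assumption on $c=c(n)$ that appears in the hypothesis of that theorem.

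To carry out this verification, fix an arbitrary constant $\kappa\in(0,1)$ and set $\delta_\kappa:=\kappa^{-1}\log(m/n)/n$. By the observation recorded right after the definition of compressibility, a unit vector $v\in\R^n$ is $(\delta_\kappa,1-\kappa)$-compressible exactly when the sum of squares of its $\lfloor\delta_\kappa n\rfloor$ largest coordinates (by absolute value) is at least $1-(1-\kappa)^2=\kappa(2-\kappa)$. For the cost vectors $c=c(n)$, I would use the trivial bound
$$
\sum_{i\in J}c_i^2\;\leq\;|J|\,\|c\|_\infty^2
\;\leq\;\kappa^{-1}\log(m/n)\,\|c\|_\infty^2
\;=\;\kappa^{-1}\bigl(\sqrt{\log(m/n)}\,\|c\|_\infty\bigr)^2,
$$
valid for every index set $J\subset[n]$ of size $\lfloor\delta_\kappa n\rfloor$; the rightmost quantity tends to $0$ as $n\to\infty$ under the hypothesis. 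Hence for all $n$ sufficiently large (depending on $\kappa$) the sum above is strictly smaller than the positive constant $\kappa(2-\kappa)$, so $c(n)$ is $(\delta_\kappa,1-\kappa)$-incompressible. Since $\kappa\in(0,1)$ was arbitrary, and the condition is vacuous for $\kappa\geq 1$ (where $1-\kappa\leq 0$), the cost vectors $c(n)$ satisfy the full hypothesis of Theorem~\ref{th main lower}.

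Applying Theorem~\ref{th main lower} then yields, for every $\varepsilon>0$ and every $K'>0$, the bound $\Prob\{\sqrt{2\log(m/n)}\,z^*\geq 1-\varepsilon\}\geq 1-n^{-K'}$ for all large $n$, which is exactly the $1-n^{-\omega(1)}$ estimate asserted. The almost-sure conclusion follows from a standard Borel--Cantelli argument, since the tail probabilities $\Prob\{\sqrt{2\log(m/n)}\,z^*<1-\varepsilon\}$ are $n^{-\omega(1)}$ and hence summable in $n$, giving $\liminf_{n\to\infty}\sqrt{2\log(m/n)}\,z^*\geq 1-\varepsilon$ almost surely, and then letting $\varepsilon$ run through a countable sequence tending to $0$. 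There is no substantive obstacle in this proof: it reduces to the elementary observation $\lfloor\delta_\kappa n\rfloor\,\|c\|_\infty^2=o(1)$, the point being that the number of coordinates appearing in the incompressibility condition grows only like $\log(m/n)$, which is more than compensated by the decay of $\|c\|_\infty^2$.
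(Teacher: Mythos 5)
Your proof is correct and follows essentially the same route as the paper: bound $\sum_{i\in J}c_i^2$ by $|J|\,\|c\|_\infty^2\leq\kappa^{-1}\log(m/n)\,\|c\|_\infty^2\to 0$ to establish the incompressibility hypothesis of Theorem~\ref{th main lower}, then apply that theorem. The only differences are minor elaborations (spelling out the threshold $\kappa(2-\kappa)$, noting the vacuous case $\kappa\geq1$, and making the Borel--Cantelli step explicit), none of which change the argument.
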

\begin{proof}
Fix any constant $\kappa\in(0,1)$, and let $I=I(n)$ be any subset of $[n]$
of size at most $\kappa^{-1}\log(m_n/n)$.
In view of the assumptions on $c$, we have
$$
\sum_{i\in I}c_i^2\leq \|c\|_\infty^2|I|\leq
\kappa^{-1}\log(m_n/n)\|c\|_\infty^2\longrightarrow 0.
$$
Since the choice of the subsets $I(n)$ was arbitrary, the last relation
implies that the vector $c$ is $(\kappa^{-1}\log(m_n/n),1-\kappa)$--incompressible
for all large $n$. It remains to apply Theorem~\ref{th main lower}.
\end{proof}

The next corollary constitutes the lower bound in Corollary~\ref{mw cor nongauss}:
\begin{corollary}[The mean width: a sharp lower bound]\label{mw cor lower}
Let $m_n$ and $A=A(n)$ be as in Theorem~\ref{th main lower},
and assume additionally that $m_n=\exp(o(n))$.
Then the spherical mean width
$\cw(P)$ of the polyhedron $P=P(n)=\{x\in\R^n:\;Ax \leq \mathbf{1}\}$
satisfies
$$
\liminf\limits_{n\to\infty} \sqrt{2\log(m_n/n)}\,\cw(P) \geq 2\quad\mbox{almost surely.}
$$
\end{corollary}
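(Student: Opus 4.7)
The plan is to deduce the corollary from the sup-norm version of the lower bound (Corollary~\ref{linfty norm cor}) via a Fubini argument, exploiting the strong delocalization of a uniformly random spherical vector. Writing $\cw(P) = 2\,\Exp_c\,z^*(A,c)$ where $c$ is uniform on $S^{n-1}$ and independent of $A$, it suffices to show that $\liminf_{n\to\infty}\sqrt{2\log(m/n)}\,\Exp_c z^*(A,c) \geq 1$ almost surely in $A$.

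First I would verify that a uniformly random unit vector $c \in S^{n-1}$ satisfies $\|c\|_\infty \leq K_1\sqrt{\log n/n}$ with probability at least $1 - n^{-K_2}$ for arbitrarily large $K_2$ (with a suitable choice of $K_1$). This is standard via the representation $c = g/\|g\|_2$ with $g \sim N(0, I_n)$, using concentration of $\|g\|_2$ around $\sqrt{n}$ together with $\|g\|_\infty = O(\sqrt{\log n})$ w.h.p. Under the additional hypothesis $m = \exp(o(n))$ we have $\log(m/n) = o(n)$, so for such a typical $c$,
\[
\sqrt{\log(m/n)}\,\|c\|_\infty = O\!\left(\sqrt{\tfrac{\log(m/n)\,\log n}{n}}\right) = o(1),
\]
placing $c$ within the hypothesis of Corollary~\ref{linfty norm cor}.

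Next, fixing $\varepsilon > 0$ and conditioning on any such delocalized $c$, Corollary~\ref{linfty norm cor} yields
\[
\Prob_A\!\big\{\sqrt{2\log(m/n)}\,z^*(A,c) \geq 1 - \varepsilon \,\big|\, c\big\} \geq 1 - n^{-\omega(1)}.
\]
Combining this with the sup-norm concentration of $c$ and applying Fubini on the joint space $(A,c)$, the event $\{\sqrt{2\log(m/n)}\,z^*(A,c) \geq 1 - \varepsilon\}$ has joint probability at least $1 - n^{-\omega(1)}$. Setting $p(A) := \Prob_c\{\sqrt{2\log(m/n)}\,z^*(A,c) \geq 1 - \varepsilon\}$ and applying Markov's inequality, with probability $1 - n^{-\omega(1)}$ over $A$ we have $p(A) \geq 1 - n^{-\omega(1)}$.

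Finally, on this high-probability event over $A$ I would use the key observation that $z^*(A,c) \geq 0$ unconditionally (since $x = 0$ is always feasible with objective value $0$), so the ``bad'' tail in $c$ contributes non-negatively to the expectation and can simply be discarded. This gives
\[
\Exp_c z^*(A,c) \;\geq\; (1 - n^{-\omega(1)})\cdot \frac{1-\varepsilon}{\sqrt{2\log(m/n)}},
\]
hence $\sqrt{2\log(m/n)}\,\cw(P) \geq 2(1-\varepsilon)(1 - o(1))$ with probability $1 - n^{-\omega(1)}$. Borel--Cantelli, applied along a countable sequence $\varepsilon \downarrow 0$, closes the argument. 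The main point of care is not really an obstacle but bookkeeping: one must ensure realizations of $c$ outside the delocalized set cannot push $\Exp_c z^*$ downward, which is precisely what the non-negativity of $z^*$ buys us.
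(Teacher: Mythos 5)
Your overall scheme — Fubini over the independent pair $(A,c)$, plus the observation that $z^*\geq 0$ lets you discard the bad tail in $c$ and lower-bound $\Exp_c z^*$ by Markov — is a clean way to upgrade the pointwise lower bound on $z^*$ to a lower bound on the mean width, and it sidesteps the contradiction argument the paper uses. However, there is a genuine gap at the point where you verify that a uniform random unit vector satisfies the hypothesis of Corollary~\ref{linfty norm cor}. The corollary requires $\sqrt{\log(m/n)}\,\|c\|_\infty \to 0$. Since a random spherical $c$ has $\|c\|_\infty \asymp \sqrt{\log n / n}$, this condition reads $\log(m/n)\,\log n = o(n)$, which is strictly stronger than the stated hypothesis $m=\exp(o(n))$ (equivalently $\log(m/n)=o(n)$). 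For instance, with $m=\exp\!\left(n/\sqrt{\log n}\right)$ one has $m=\exp(o(n))$, yet
\[
\sqrt{\log(m/n)}\,\|c\|_\infty \;\asymp\; \sqrt{\frac{n}{\sqrt{\log n}}}\cdot\sqrt{\frac{\log n}{n}} \;=\; (\log n)^{1/4} \;\longrightarrow\;\infty,
\]
so Corollary~\ref{linfty norm cor} does not apply. The issue is that passing through the $\ell_\infty$-bound loses a logarithmic factor: the step $\sum_{i\in I}c_i^2\leq |I|\,\|c\|_\infty^2$ is lossy for a spherical $c$, whose largest $k$ squared coordinates sum to roughly $k\log(n/k)/n$ rather than $k\log n / n$. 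The paper instead verifies incompressibility directly (the hypothesis of Theorem~\ref{th main lower}), using concentration on the sphere together with a union bound over $\binom{n}{k}$ subsets of size $k\asymp\kappa^{-1}\log(m/n)=o(n)$; this is exactly tight enough to work under $m=\exp(o(n))$. To repair your argument, replace the $\|c\|_\infty$ check and Corollary~\ref{linfty norm cor} by a direct verification that a random spherical $c$ is $(\kappa^{-1}\log(m/n)/n,\,1-\kappa)$-incompressible with probability $1-\exp(-\Omega(n))$ for each fixed $\kappa$, and then invoke Theorem~\ref{th main lower} itself. The rest of your Fubini/Markov/non-negativity argument goes through unchanged and is, if anything, a slightly more transparent way to conclude than the paper's contradiction-and-Borel--Cantelli route.
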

\begin{proof}
In this proof, we assume that for each $n$,
$c=c(n)$ is a uniform random unit vector
in $\R^n$, and that $c(n)$, $n\in \N$, are mutually independent
and independent from $\{A(n),\;n\in\N\}$.
Recall that the mean width of the polyhedron $P=P(n)=\{x\in\R^n:\;Ax\leq {\bf 1}\}$
is defined as
$$
\cw(P)=2\,\Exp_c\,\max\{\langle c,x\rangle:\;x\in P\},
$$
where $\Exp_c$ denotes the expectation taken with respect to $c=c(n)$.
It will be most convenient for us to prove the statement by contradiction.
Namely, assume that there is a small positive number $\delta>0$
such that
$$
\Prob\big\{\liminf\limits_{n\to\infty} \sqrt{2\log(m_n/n)}\cw(P(n))\leq 2-\delta\big\}
\geq \delta.
$$
Denote by $\mathcal F$ the $\sigma$--field
generated by the random matrices $\{A(n),\;n\in\N\}$.
Observe that the event
$$
\Event:=\big\{\liminf\limits_{n\to\infty} \sqrt{2\log(m_n/n)}\cw(P(n))\leq 2-\delta\big\}
$$
is $\mathcal F$--measurable. Condition for a moment on any
realization of the polyhedra $P(n)$, $n\in \N$, from $\Event$,
and let $(n_k)_{k=1}^\infty$ be an increasing sequence of integers
such that $\sqrt{2\log(m_{n_k}/n_k)}\cw(P(n_k))\leq 2-\delta/2$ for every $k$.
The condition
$
\cw(P(n_k))\leq
\frac{2-\delta/2}{\sqrt{2\log(m_{n_k}/n_k)}}
$
implies
\begin{align*}
\frac{1-\delta/4}{\sqrt{2\log(m_{n_k}/n_k)}}&\geq
\Exp_c\,\max\{\langle c(n_k),x\rangle:\;x\in P(n_k)\}\\
&\geq \frac{1-\delta/8}{\sqrt{2\log(m_{n_k}/n_k)}}\,\Prob_c\bigg\{
\max\{\langle c(n_k),x\rangle:\;x\in P(n_k)\}\geq \frac{1-\delta/8}{\sqrt{2\log(m_{n_k}/n_k)}}
\bigg\},
\end{align*}
and hence
$$
\Prob_c\bigg\{
\max\{\langle c(n_k),x\rangle:\;x\in P(n_k)\}< \frac{1-\delta/8}{\sqrt{2\log(m_{n_k}/n_k)}}
\bigg\}
\geq 1-\frac{1-\delta/4}{1-\delta/8},
$$
where $\Prob_c$ is the conditional probability given the realization of $P(n)$, $n\in\N$,
from $\Event$.
By the Borel--Cantelli lemma, the last assertion yields
$$
\Prob_c\bigg\{
\liminf\limits_{n\to\infty}
\sqrt{2\log(m_n/n)}\max\{\langle c(n),x\rangle:\;x\in P(n)\}\leq 1-\delta/8
\bigg\}=1.
$$
Removing the conditioning on $\Event$, we arrive at the estimate
\begin{equation}\label{kgjnerlkejnlfkjnfalkajnf}
\Prob\bigg\{
\liminf\limits_{n\to\infty}
\sqrt{2\log(m_n/n)}\max\{\langle c(n),x\rangle:\;x\in P(n)\}\leq 1-\delta/8
\bigg\}\geq \delta.
\end{equation}

\bigskip

In the second part of the proof, we will show that
the assertion \eqref{kgjnerlkejnlfkjnfalkajnf} leads to contradiction.
Fix for a moment any $\kappa\in(0,1)$.
We claim that $c(n)$
is $(\kappa^{-1}\log(m_n/n),1-\kappa)$--incompressible for 
every large enough $n$ with probability $1-n^{-\omega(1)}$.
Indeed, the standard concentration inequality on the sphere
(see, in particular, \cite[Chapter~5]{VershyninsBook}) implies that
for every choice of a non-random $\lfloor\kappa^{-1}\log(m_n/n)\rfloor$--subset $I\subset[n]$,
$$
\Prob\bigg\{\sqrt{\sum_{i\in I}c_i^2}\geq \sqrt{\frac{K_1|I|}{n}}
+t\bigg\}\leq 2\exp(-K_2 n t^2),\quad t>0,
$$
for some universal constants $K_1,K_2>0$.
Taking the union bound, we then get
\begin{align*}
\Prob&\bigg\{\sqrt{\sum_{i\in I}c_i^2}\geq \sqrt{\frac{K_1|I|}{n}}
+t\;\;\mbox{for some $\lfloor\kappa^{-1}\log(m_n/n)\rfloor$--subset $I$}\bigg\}\\
&\leq 2{n\choose \lfloor\kappa^{-1}\log(m_n/n)\rfloor}\exp(-K_2 n t^2),\quad t>0,
\end{align*}
Our assumption $m_n=\exp(o(n))$ implies that
$$
\sqrt{\frac{K_1\,\lfloor\kappa^{-1}\log(m_n/n)\rfloor}{n}}\leq\kappa/2
\quad \mbox{for all large enough $n$.}
$$
For all such $n$, the last estimate 
with $t:=\kappa/2$ yields
\begin{equation}\label{apirwugokjndakfajnf}
\Prob\bigg\{\sqrt{\sum_{i\in I}c_i^2}\geq \kappa\;\;\mbox{for a $\lfloor\kappa^{-1}\log(m_n/n)\rfloor$--subset $I$}\bigg\}
\leq 2{n\choose \lfloor\kappa^{-1}\log(m_n/n)\rfloor}\exp(-K_2 n \kappa^2/4).
\end{equation}
It remains to observe that as long as $m_n=\exp(o(n))$,
we have
$$
{n\choose \lfloor\kappa^{-1}\log(m_n/n)\rfloor}=\exp(o(n)),
$$
to that the expression on the right hand side of \eqref{apirwugokjndakfajnf}
is bounded above by $2\exp(-\Omega(n))$. This verifies the claim.

\bigskip

With the claim above confirmed, it remains to apply Theorem~\ref{th main lower}.
From the theorem, we obtain that
$$
\liminf\limits_{n\to\infty}\sqrt{2\log(m_n/n)}\max\{\langle c,x\rangle:\;
x\in P(n)\}\geq 1
$$
with probability one. However, that contradicts
\eqref{kgjnerlkejnlfkjnfalkajnf}, completing the proof.
\end{proof}

\section{The upper bound on $z^*$ in subgaussian setting}\label{akjshbfakjhfbkqjwhb}

The goal of this section is to prove the following result:
\begin{theorem}[Upper bound on $z^*$]\label{th main upper}
Fix any $K\geq 1$.
Let $n\to\infty$, $\frac{m_n}{n}\to\infty$.
For each $n$ let $c=c(n)$
be a non-random unit vector in $\R^n$,
and assume
that $\lim\limits_{n\to\infty}\big(\log^{3/2}(m_n/n)\,\|c\|_\infty\big)=0$.
For each
$n$, let $A=A(n)$
be an $m_n\times n$ matrix with
mutually independent centered $K$--subgaussian entries of
unit variances.
Then for every $\varepsilon>0$ and all large $n$ we have
$$
\Prob\bigg\{\sqrt{2\log (m_n/n)}z^*\leq 1+\varepsilon\bigg\}\geq 1- n^{-\omega(1)}.
$$
In particular, $\limsup\limits_{n\to\infty}
\sqrt{2\log(m_n/n)} z^*\leq 1$ almost surely.
\end{theorem}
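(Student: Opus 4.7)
The plan is to pass to the polar formulation of the LP, establish the required pointwise bound via Proposition~\ref{aksjfnoifurhfoioin}, and then lift it uniformly over the sphere by a covering.

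\emph{Reformulation.} By polar duality for $P=\{x:Ax\leq\mathbf{1}\}$, whose polar is $P^\circ=\conv(\{0\}\cup\{\row_i(A):i\leq m\})$, the inequality $z^*\leq t:=(1+\varepsilon)/\sqrt{2\log(m/n)}$ is equivalent to $c/t\in P^\circ$. Expressing this via the support function of $P^\circ$ rephrases the condition as
$$
\max_{i\leq m}\langle\row_i(A),v\rangle\;\geq\;\langle c,v\rangle/t
\qquad\text{for every $v\in S^{n-1}$ with $\langle c,v\rangle>0$.}
$$
Hence the goal is to prove this inequality uniformly in $v$ with probability $\geq 1-n^{-\omega(1)}$.

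\emph{A single direction.} Pick a small constant $\eta=\eta(\varepsilon)>0$ with $(1-\eta)(1+\varepsilon)>1$. For any unit $v$ with $\sqrt{\log(m/n)}\,\|v\|_\infty=o(1)$, Proposition~\ref{aksjfnoifurhfoioin} (applied with $\sqrt{\delta n}\simeq(1-\eta)\sqrt{2\log(m/n)}$) gives $\Prob\{\langle\row_i(A),v\rangle\geq(1-\eta)\sqrt{2\log(m/n)}\}\geq(n/m)^{(1-\eta)^2}$ for each $i$; independence of the rows then yields
$$
\Prob\Bigl\{\max_i\langle\row_i(A),v\rangle<(1-\eta)\sqrt{2\log(m/n)}\Bigr\}\leq\exp\bigl(-n\,(m/n)^{\Omega(\varepsilon)}\bigr).
$$
Since $(1-\eta)\sqrt{2\log(m/n)}>1/t\geq\langle c,v\rangle/t$, this is what the reformulated inequality asks at one $v$.

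\emph{Lifting via covering.} To extend the pointwise bound to all $v$, I would build a net $\mathcal N$ consisting of suitably delocalized unit vectors (satisfying $\sqrt{\log(m/n)}\|v\|_\infty=o(1)$), so that the single-direction estimate applies pointwise on $\mathcal N$. The hypothesis $\log^{3/2}(m/n)\|c\|_\infty=o(1)$ invites the parameterization $v=\alpha c+w$ with $w\perp c$, together with a coordinate-quantization construction in the spirit of Lemma~\ref{kgjnovubpogiwpi} that forces the orthogonal component $w$ to be delocalized. Lipschitz continuity of $v\mapsto\max_i\langle\row_i(A),v\rangle$ with constant $\max_i\|\row_i(A)\|_2=O(\sqrt n)$ (via Proposition~\ref{alkfjbojhrbljafajkn} plus a union bound over $i$) transfers the estimate to arbitrary $v$, and a union bound of the per-point tail over $\mathcal N$ completes the proof.

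\emph{Main obstacle.} The delicate step is designing $\mathcal N$ so that $|\mathcal N|$ is small enough for the union bound to succeed when $m/n\to\infty$ only slowly. A uniform $\delta$-net on $S^{n-1}$ at the resolution $\delta\sim\sqrt{\log(m/n)/n}$ demanded by the Lipschitz approximation has cardinality $\exp(\Theta(n\log n))$, whereas the pointwise failure probability $\exp(-n(m/n)^{\Omega(\varepsilon)})$ is insufficient to overcome this whenever $(m/n)^{\Omega(\varepsilon)}=o(\log n)$. The ``special'' covering must therefore take advantage of the structure of the problem---for instance, by splitting $v$ according to whether the orthogonal part $w$ is compressible (in which case the argument can be closed directly, since the set of indices $i$ with $\langle\row_i(A),c\rangle\geq(1-\eta)\sqrt{2\log(m/n)}$ has size $\gg n$ with high probability and already furnishes an index satisfying the reformulated inequality for every such $v$) or incompressible (in which case a lower-dimensional net of cardinality $\exp(o(n(m/n)^{\Omega(\varepsilon)}))$ suffices).
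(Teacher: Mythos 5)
Your polar-duality reformulation is correct and is an equivalent restatement of what the paper shows, but the proposal has two genuine gaps, and the obstacle you name at the end is precisely where the argument breaks and is not fixed by your sketch.

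\textbf{Gap 1: the per-direction lower bound does not apply to all test directions.} Proposition~\ref{aksjfnoifurhfoioin} requires $\sqrt{\delta k}\,\|y\|_\infty\to 0$ for the coefficient vector $y$, but your reformulated inequality must hold for \emph{every} unit $v$ with $\langle c,v\rangle>0$, in particular for highly localized $v$. The hypothesis of the theorem delocalizes only $c$. The paper therefore does not use Proposition~\ref{aksjfnoifurhfoioin} directly; it proves the upgraded Proposition~\ref{aljhfbofjhbojb}, a moderate-deviations lower bound for sums $\sum_i(c_i+w_i)\xi_i$ with \emph{arbitrary} $w\perp c$, leveraging only the delocalization of $c$. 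The (nontrivial) proof separates $\|c+w\|_2$ large vs.\ small and uses orthogonality to show that most of the mass of $c+w$ sits on coordinates where $c$ is small. Without such an enhancement, the estimate you invoke simply does not apply to the directions a net must contain.

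\textbf{Gap 2: the covering.} You correctly note that with the Lipschitz transfer in $\ell_\infty$ (constant $\max_i\|\row_i(A)\|_2=O(\sqrt n)$) one needs resolution $\delta\sim\sqrt{\log(m/n)/n}$, giving net size $\exp(\Theta(n\log n))$, which the per-point failure probability $\exp(-n(m/n)^{\Omega(\varepsilon)})$ cannot beat when $m/n$ grows slowly. Your proposed remedy (splitting by compressibility of $w$) is not developed to a point where it can be checked, and it is not the route the paper takes. The paper's resolution is different: Corollary~\ref{aksjfopifjnakmlfk} shows that each net point $z'\in c^\perp$ makes at least $k\sim m(n/m)^{1-\varepsilon/8}$ constraints violated by a \emph{constant} margin $\varepsilon/2$, with failure probability $\exp\bigl(-m(n/m)^{1-\varepsilon/5}\bigr)=\exp\bigl(-n(m/n)^{\varepsilon/5}\bigr)$. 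The transfer to an arbitrary nearby feasible $z$ is then carried out in $\ell_2$: if $\|z-z'\|_2\le\delta$ and $z$ satisfies all constraints while $z'$ violates $k$ of them by $\varepsilon/2$, then $\|A(z-z')\|_2\ge\varepsilon\sqrt k/2$, which contradicts $\|A\|\le 2\sqrt m$ once $\delta:=\exp(-(m/n)^{K'})$ with $K'<\varepsilon/5$. That net has cardinality $\exp\bigl(n(m/n)^{K'}\bigr)$, which is beaten by $\exp\bigl(-n(m/n)^{\varepsilon/5}\bigr)$, and the resolution is compatible with the $\ell_2$ transfer since $\exp(-(m/n)^{K'})\ll(n/m)^{1/2}$. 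This ``many violations by a constant amount, then operator-norm transfer'' idea, together with the outer-radius bound of Proposition~\ref{askfjnafkjnofjnfkjnasd} that confines the net to a bounded ball in $c^\perp$, is the ingredient your proof is missing.
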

Observe that the statement of the theorem is equivalent to
the assertion that the intersection of the feasible polyhedron
$P=\{x\in\R^n:\;Ax\leq{\bf 1}\}$ with the affine hyperplane
$$
H:=\bigg\{w+\frac{1+\varepsilon}{\sqrt{2\log (m_n/n)}}\,c:\;w\in c^\perp\bigg\}
$$
is empty with probability $1-n^{-\omega(1)}$.
To prove the result, we will combine the
moderate deviations estimate from Proposition~\ref{aksjfnoifurhfoioin}
with a covering argument. 

\subsection{Auxiliary results}

Proposition~\ref{aksjfnoifurhfoioin} cannot be directly applied
in our setting since the vectors in $H$ generally do not satisfy (even if normalized)
the required $\|\cdot\|_\infty$--norm bound.
To overcome this issue, we generalize the deviation
bound to sums of two {\it orthogonal} vectors
where one of the vectors satisfies a strong $\|\cdot\|_\infty$--norm
estimate:
\begin{prop}\label{aljhfbofjhbojb}
Fix any $K\geq 1$.
For each $n$ let $c=c(n)$
be a non-random unit vector in $\R^n$, $w=w(n)$ be any fixed vector in $c^\perp$,
and let $\delta_n\in(0,1)$ be a sequence of numbers
such that $\delta_n\, n$ converges to infinity.
Assume that $\lim\limits_{n\to\infty}\big((\delta_n\, n)^{3/2}\,\|c\|_\infty\big)=0$.
For each
$n$, let $\xi_1^{(n)},\xi_2^{(n)},\dots,\xi_n^{(n)}$
be mutually independent centered $K$--subgaussian variables of
unit variances.
Then for every $\varepsilon>0$ and all large $n$ we have
$$
\Prob\bigg\{\sum_{i=1}^n (c_i+w_i)\xi_i^{(n)}
\geq (1-\varepsilon)\sqrt{\delta_n\, n}\bigg\}
\geq \exp\big(-\delta_n\, n/2\big).
$$
\end{prop}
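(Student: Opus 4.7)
The plan is to adapt the change-of-measure argument of Proposition~\ref{aksjfnoifurhfoioin}, tilting each $\xi_i$ only in the direction of $c$ (for which we have the strong $\|\cdot\|_\infty$-control) rather than in the direction of $v:=c+w$ (whose $\|\cdot\|_\infty$-norm is uncontrolled), and to split into two cases according to the size of $\|v\|_2=\sqrt{1+\|w\|_2^2}$. Set $s_0:=(1-\varepsilon/2)\sqrt{\delta n}$ and define, for each $i$, the tilted variable $\tilde\xi_i$ by
$$d\mu_{\tilde\xi_i}(z):=B_i^{-1}\exp(s_0c_iz)\,d\mu_{\xi_i}(z),\qquad B_i:=\Exp\exp(s_0c_i\xi_i).$$
Since $s_0\|c\|_\infty\leq\sqrt{\delta n}\,\|c\|_\infty\to 0$, Taylor expansions as in the proof of Proposition~\ref{aksjfnoifurhfoioin} yield, uniformly in $i$, that $B_i=1+(1+o(1))s_0^2c_i^2/2$, $\Exp\tilde\xi_i=s_0c_i+O((s_0c_i)^2)$, $\Var\tilde\xi_i=1+O(s_0|c_i|)$, and that each $\tilde\xi_i$ is subgaussian with moment depending only on $K$. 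Using $\langle v,c\rangle=\|c\|_2^2=1$, the sum $Y:=\sum_iv_i\tilde\xi_i$ satisfies $\Exp Y=s_0+O(s_0^2\|c\|_\infty\|v\|_2)$ and $\Var Y=(1+o(1))\|v\|_2^2$, while $\sum_ic_i\tilde\xi_i$ has mean $s_0+o(1)$ and variance $1+o(1)$.

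Let $K_0=K_0(K)>0$ be the constant produced by Proposition~\ref{alkjfnbalkfjnselfkjsnflkj}. If $\|v\|_2\geq(1-\varepsilon/2)\sqrt{\delta n}/K_0$, then applying Proposition~\ref{alkjfnbalkfjnselfkjsnflkj} directly to the centered unit-variance subgaussian variable $\sum_iv_i\xi_i/\|v\|_2$ gives $\Prob\{\sum_iv_i\xi_i\geq K_0\|v\|_2\}\geq K_0$, and since $K_0\|v\|_2\geq(1-\varepsilon)\sqrt{\delta n}$ and $\exp(-\delta n/2)\to 0$, the required lower bound $\exp(-\delta n/2)$ follows for all large $n$. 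In the remaining case $\|v\|_2<(1-\varepsilon/2)\sqrt{\delta n}/K_0=O(\sqrt{\delta n})$, the hypothesis $(\delta n)^{3/2}\|c\|_\infty\to 0$ forces the error in $\Exp Y$ to be
$$|\Exp Y-s_0|=O(s_0^2\|c\|_\infty\|v\|_2)=O((\delta n)^{3/2}\|c\|_\infty)=o(1),$$
so $\Exp Y\geq s_0-o(1)\geq(1-\varepsilon)\sqrt{\delta n}$ for all large $n$.

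To conclude in this second case, apply Proposition~\ref{alkjfnbalkfjnselfkjsnflkj} to $(Y-\Exp Y)/\sqrt{\Var Y}$ under the tilted measure $\tilde\Prob$: the latter is centered, of unit variance, and subgaussian with moment depending only on $K$, so a constant $K_1=K_1(K)>0$ satisfies $\tilde\Prob\{Y\geq\Exp Y+K_1\sqrt{\Var Y}\}\geq K_1$, which together with $\Exp Y\geq(1-\varepsilon)\sqrt{\delta n}$ yields $\tilde\Prob\{Y\geq(1-\varepsilon)\sqrt{\delta n}\}\geq K_1$. A Chebyshev bound applied to $\sum_ic_i\tilde\xi_i$ produces a constant $M_1=M_1(K)$ with $\tilde\Prob\{\sum_ic_i\tilde\xi_i\leq s_0+M_1\}\geq 1-K_1/2$, so the event $T:=\{Y\geq(1-\varepsilon)\sqrt{\delta n}\}\cap\{\sum_ic_i\tilde\xi_i\leq s_0+M_1\}$ has $\tilde\Prob(T)\geq K_1/2$. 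The Radon--Nikodym identity then gives
$$\Prob\Big\{\sum_iv_i\xi_i\geq(1-\varepsilon)\sqrt{\delta n}\Big\}\geq\tilde\Prob(T)\cdot\Big(\prod_iB_i\Big)\exp(-s_0(s_0+M_1)),$$
and since $\prod_iB_i=\exp((1+o(1))s_0^2/2)$, $s_0M_1=O(\sqrt{\delta n})$, and $s_0^2=(1-\varepsilon/2)^2\delta n<\delta n$, the right-hand side equals $\exp(-(1-\varepsilon/2)^2\delta n/2+o(\delta n))$, which is $\omega(\exp(-\delta n/2))$, as required. The main obstacle I anticipate is that the error in $\Exp Y$ grows with $\|v\|_2$, so the tilting degenerates once $\|v\|_2\gg\sqrt{\delta n}$; the case split above circumvents this by invoking the scale-free Paley--Zygmund bound of Proposition~\ref{alkjfnbalkfjnselfkjsnflkj} whenever $\|v\|_2$ is that large.
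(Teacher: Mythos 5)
Your proof is correct, and it takes a genuinely different route from the paper in the harder case, which is worth highlighting. The paper handles the case $\|c+w\|_2 = O(\sqrt{\delta n})$ by a truncation: it peels off a subset $I$ of $\lfloor\nu^{-1/2}\delta^2n^2\rfloor$ largest coordinates of $v=c+w$, shows the remaining block $y'$ has tiny sup-norm and Euclidean norm $1-o(1)$, invokes the already-proven moderate-deviations lower bound (Proposition~\ref{aksjfnoifurhfoioin}) on the block $[n]\setminus I$, and recombines with a Paley--Zygmund bound on the block $I$. You instead re-run the change-of-measure argument from scratch, with the crucial modification that the tilting parameter is $s_0 c$ rather than a multiple of $v$: because $\langle v,c\rangle=1$, the tilt still shifts the mean of $\sum v_i\tilde\xi_i$ to $s_0+O(s_0^2\|c\|_\infty\|v\|_2)$, and the hypothesis $(\delta n)^{3/2}\|c\|_\infty\to 0$ is exactly what kills the error term once $\|v\|_2=O(\sqrt{\delta n})$. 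You then control the Radon--Nikodym factor through $\sum c_i\tilde\xi_i$ rather than through $Y$ itself, which is what makes the cost $\exp(-(1\pm o(1))s_0^2/2)$ rather than something involving $\|v\|_2$. The one technical point worth flagging is the claim that the centered, normalized tilted sum is subgaussian with moment depending only on $K$; this does hold because $s_0\|c\|_\infty\to 0$ keeps the tilts uniformly small, but it deserves a sentence of justification rather than being folded into "as in Proposition~\ref{aksjfnoifurhfoioin}". Overall, your approach avoids the truncation step and yields a cleaner case split ($\|v\|_2$ above or below a constant multiple of $\sqrt{\delta n}$, versus the paper's $\nu^{-1/8}\sqrt{\delta n}$ threshold), at the price of re-deriving the change-of-measure computation rather than citing it as a black box.
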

\begin{proof}
Fix a small $\varepsilon>0$.
In view of the assumptions on $c(n)$, there is a sequence of positive
numbers $\nu(n)$ converging to zero such that
$(\delta_n\, n)^{3/2}\|c\|_\infty\leq \nu$ for all $n$.
We shall consider two scenarios:
\begin{itemize}
\item $\|c+w\|_2\geq \nu^{-1/8}\sqrt{\delta_n\, n}=\omega(\sqrt{\delta_n\, n})$.
Observe that $\Var(\sum_{i=1}^n (c_i+w_i)\xi_i^{(n)})=\|c+w\|_2^2=\omega(\delta_n\, n)$,
whereas the variable $\sum_{i=1}^n (c_i+w_i)\xi_i^{(n)}$
is centered and $K'\|c_i+w_i\|_2$--subgaussian, for some $K'$ depending only on $K$.
Lemma~\ref{alkjfnbalkfjnselfkjsnflkj} then implies that,
assuming $n$ is sufficiently large,
$$
\Prob\bigg\{\sum_{i=1}^n (c_i+w_i)\xi_i^{(n)}\geq \sqrt{\delta_n\, n}\bigg\}\geq \tilde K
=\omega\big(\exp(-\delta_n\, n/2)\big),
$$
for some $\tilde K>0$ depending only on $K$.
\item $\|c+w\|_2< \nu^{-1/8}\sqrt{\delta_n\, n}$.
Let $I(n)$ be a
$\lfloor\nu^{-1/2}\delta_n^2 n^2\rfloor$--subset of $[n]$
corresponding to $\lfloor\nu^{-1/2}\delta_n^2 n^2\rfloor$ largest
(by the absolute value) components of $c+w$, with ties resolved arbitrarily.
Further, let $y'=y'(n)\in \R^{[n]\setminus I}$ be the restriction
of $c+w$ to $[n]\setminus I$.
In view of the condition $\|c+w\|_2< \nu^{-1/8}\sqrt{\delta_n\, n}$, we have
$$
\|y'\|_\infty\leq \frac{\nu^{-1/8}\sqrt{\delta_n\, n}}{\sqrt{|I|}}
\leq (1+o(1))\nu^{1/8}(\delta_n\, n)^{-1/2}=o\big((\delta_n\, n)^{-1/2}\big).
$$
Further, in view of orthogonality of $w$ and $c$,
\begin{align*}
\sum_{i\in I}w_i c_i&=-\sum_{i\in [n]\setminus I}(y_i'-c_i) c_i\\
&\geq \sum_{i\in [n]\setminus I}c_i^2
-\|y_i'\|_2\\
&\geq 1-|I|\,\|c\|_\infty^2
-\|y_i'\|_2\\
&\geq 1-o(1)-\|y_i'\|_2,
\end{align*}
whereas,
$$
\sum_{i\in I}w_i c_i
\leq \|w\|_2\,\|c\|_\infty\,\sqrt{|I|}
\leq (1+o(1))\nu^{-1/8}\cdot \nu^{3/4}=o(1).
$$
We infer that $\|y_i'\|_2\geq 1-o(1)$. Thus, setting $y=y(n):=y'/\|y'\|_2$,
we obtain a unit vector with $\sqrt{\delta n}\|y\|_\infty=o(1)$.
Applying Proposition~\ref{aksjfnoifurhfoioin} with $\delta_n\,(1-\varepsilon/2)$
in place of $\delta_n$ and $\varepsilon/2$ in place of $\varepsilon$, we get
\begin{align*}
\Prob\bigg\{\sum_{i\in [n]\setminus I}
y_i'\xi_i^{(n)}\geq (1-\varepsilon)\sqrt{\delta_n\, n}\bigg\}
&\geq 
\Prob\bigg\{\sum_{i\in [n]\setminus I}
y_i'\xi_i^{(n)}\geq (1-\varepsilon/2)\sqrt{\delta_n\,(1-\varepsilon/2) n}\bigg\}\\
&\geq\exp\big(-\delta_n\,
(1-\varepsilon/2)n/2\big),
\end{align*}
assuming $n$ is sufficiently large.
On the other hand, again applying Lemma~\ref{alkjfnbalkfjnselfkjsnflkj},
$$
\Prob\bigg\{\sum_{i\in I} (c_i+w_i)\xi_i^{(n)}\geq 0\bigg\}\geq \tilde K,
$$ 
for some $\tilde K>0$ depending only on $K$.
Combining the last two relations, we get the desired estimate.
\end{itemize}
\end{proof}

To apply the last proposition to our setting of interest, 
we shall bound the number of constraints violated by a test vector:
\begin{corollary}\label{aksjfopifjnakmlfk}
Let $K$, $m_n$, $A=A(n)$, and $c=c(n)$ be as in Theorem~\ref{th main upper},
and let $w=w(n)$ be any sequence of vectors orthogonal to $c=c(n)$.
Then for every small constant $\varepsilon>0$ and all large $n$ we have
$$
\Big(A\Big(w+\frac{1+\varepsilon}{\sqrt{2\log(m_n/n)}}\,c\Big)\Big)_i
\geq 1+\varepsilon/2\quad
\mbox{for at least $m_n\big(\frac{n}{m_n}\big)^{1-\varepsilon/8}$ indices $i\leq m_n$}
$$
with probability at least
$1-\exp\big(-m_n\big(\frac{n}{m_n}\big)^{1-\varepsilon/5}\big)$.
\end{corollary}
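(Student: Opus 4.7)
Write $v:=w+\tfrac{1+\varepsilon}{\sqrt{2\log(m/n)}}c$ and decompose it as $v=\alpha(c+w')$, where
$\alpha:=\tfrac{1+\varepsilon}{\sqrt{2\log(m/n)}}$ and $w':=w/\alpha$. Since $w\perp c$ by hypothesis, also
$w'\perp c$. The event $(Av)_i\geq 1+\varepsilon/2$ is therefore equivalent to
\begin{equation*}
\sum_{j=1}^n (c_j+w'_j)A_{ij}\;\geq\;\frac{1+\varepsilon/2}{1+\varepsilon}\sqrt{2\log(m/n)}.
\end{equation*}
The plan is to lower-bound the probability of this event for a single row via Proposition~\ref{aljhfbofjhbojb},
and then use the independence of the rows of $A$ together with a Chernoff-type concentration bound to
control the number of rows that violate the inequality.

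For a single row $i$, I would apply Proposition~\ref{aljhfbofjhbojb} with the cost vector $c(n)$, the orthogonal vector $w'(n)$, and parameter
$\delta n:=2(1-\varepsilon/2)\log(m/n)$. The growth condition $\delta n\to\infty$ holds because $m/n\to\infty$, and the moment condition $(\delta n)^{3/2}\|c\|_\infty\to 0$ follows directly from the assumption $\log^{3/2}(m/n)\,\|c\|_\infty\to 0$. For $\varepsilon$ sufficiently small, a short computation gives
$(1-\varepsilon/2)>\bigl(\tfrac{1+\varepsilon/2}{1+\varepsilon}\bigr)^2$, so one can choose $\varepsilon'=\varepsilon'(\varepsilon)>0$ with
$(1-\varepsilon')\sqrt{\delta n}=\tfrac{1+\varepsilon/2}{1+\varepsilon}\sqrt{2\log(m/n)}$.
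Proposition~\ref{aljhfbofjhbojb} then yields, for every $i\leq m$ and all large $n$,
\begin{equation*}
p_i:=\Prob\bigl\{(Av)_i\geq 1+\varepsilon/2\bigr\}\;\geq\;\exp(-\delta n/2)\;=\;(n/m)^{1-\varepsilon/2}.
\end{equation*}

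Since the rows of $A$ have mutually independent entries, the indicator variables $X_i:=\chi_{\{(Av)_i\geq 1+\varepsilon/2\}}$ are mutually independent Bernoullis, each with success probability at least $p:=(n/m)^{1-\varepsilon/2}$. Set $X:=\sum_{i=1}^m X_i$, so that $\Exp X\geq mp$. A standard lower-tail Chernoff bound for sums of independent Bernoullis gives
\begin{equation*}
\Prob\{X\leq mp/2\}\;\leq\;\exp(-mp/8).
\end{equation*}
Since $m/n\to\infty$, one has $(n/m)^{3\varepsilon/8}\to 0$, so for all large $n$,
$mp/2\geq m(n/m)^{1-\varepsilon/8}$, which yields the claimed lower bound on the number of rows.

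The only remaining point is to match the failure probability. I need $\exp(-mp/8)\leq\exp\bigl(-m(n/m)^{1-\varepsilon/5}\bigr)$, which reduces to verifying $(n/m)^{1-\varepsilon/2}/8\geq (n/m)^{1-\varepsilon/5}$, equivalently $(n/m)^{3\varepsilon/10}\leq 1/8$. This holds for all large $n$ since $n/m\to 0$, completing the argument. The only nontrivial step is the single-row moderate-deviations estimate; the row-independence and Chernoff piece is routine, and the chief bookkeeping obstacle is just keeping the cascade of $\varepsilon$-perturbations ($\varepsilon/2,\varepsilon/5,\varepsilon/8$) consistent.
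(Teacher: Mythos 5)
Your proof is correct and follows essentially the same approach as the paper: decompose the test vector so that Proposition~\ref{aljhfbofjhbojb} yields a per-row success probability of the form $(n/m)^{1-\Theta(\varepsilon)}$, then use independence of the rows with a standard Bernoulli concentration bound. The only cosmetic difference is that you invoke a Chernoff lower tail while the paper uses the equivalent binomial counting/union bound, together with a slightly different tuning of $\delta$ (yielding exponent $1-\varepsilon/2$ rather than the paper's $1-\varepsilon/4$), neither of which changes the substance of the argument.
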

\begin{proof}
In view of Proposition~\ref{aljhfbofjhbojb}, applied with
$\delta: =\frac{2\log(m_n/n)\,(1+\varepsilon/2)^2}
{(1+\varepsilon)^2(1-\varepsilon/4)^2 n}$,
for all large $n$ and every $i\leq m_n$ we have
$$
\Prob\Big\{
\Big(A\Big(w+\frac{1+\varepsilon}{\sqrt{2\log(m_n/n)}}\,c\Big)\Big)_i
\geq 1+\varepsilon/2
\Big\}
\geq \Big(\frac{n}{m_n}\Big)^{1-\varepsilon/4}.
$$
Hence, the probability that the last condition holds for less than
$m_n\big(\frac{n}{m_n}\big)^{1-\varepsilon/8}$ indices $i\leq m_n$,
is bounded above by
$$
\bigg(e\Big(\frac{n}{m_n}\Big)^{\varepsilon/8-1}\bigg)^{m_n(\frac{n}{m_n})^{1-\varepsilon/8}}
\bigg(1-\Big(\frac{n}{m_n}\Big)^{1-\varepsilon/4}\bigg)^{
m_n-m_n\big(\frac{n}{m_n}\big)^{1-\varepsilon/8}}
\leq \exp\bigg(-m_n\Big(\frac{n}{m_n}\Big)^{1-\varepsilon/5}\bigg)
$$
for all sufficiently large $n$.
\end{proof}

\medskip

Recall that the outer radius $R(P)$ of a polyhedron $P$
is defined as $R(P):=\max\limits_{x\in P}\|x\|_2$.

\begin{prop}[An upper bound on the outer radius]\label{askfjnafkjnofjnfkjnasd}
Let $K,n,m_n,A(n)$ be as in Theorem~\ref{th main upper}. Then
with probability $1-n^{-\omega(1)}$, the polyhedron
$\big\{x\in\R^n:\;Ax\leq {\bf 1}\big\}$ has the outer radius
$R(P)$ bounded above by a function of $K$.
\end{prop}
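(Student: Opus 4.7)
The plan is to reformulate the conclusion as a uniform lower bound on $\max_i \langle \row_i(A),y\rangle$ over the unit sphere, and then establish that bound by an $\varepsilon$-net argument with $\varepsilon$ a small constant. Observe that $R(P)\le R$ is equivalent to: there is no unit vector $y\in\R^n$ and no $r>R$ such that $\langle \row_i(A),ry\rangle\le 1$ for all $i\le m$. Hence it suffices to show that there is a constant $c=c(K)>0$ such that, with probability $1-n^{-\omega(1)}$,
\[
\inf_{y\in S^{n-1}}\max_{1\le i\le m}\langle\row_i(A),y\rangle\ \ge\ c,
\]
since then $R(P)\le 1/c$.

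First, I would fix a unit vector $y$ and apply Proposition~\ref{alkjfnbalkfjnselfkjsnflkj} to the centered $K$-subgaussian unit-variance variable $\langle\row_i(A),y\rangle$ (whose subgaussian moment depends only on $K$); this yields a constant $c=c(K)>0$ with $\Prob\{\langle\row_i(A),y\rangle\ge c\}\ge c$. Because the rows of $A$ are independent, a standard Chernoff bound on the sum of Bernoulli variables shows that the set $S(y):=\{i\le m:\langle\row_i(A),y\rangle\ge c\}$ satisfies $|S(y)|\ge cm/2$ with probability at least $1-\exp(-K_3 m)$, with $K_3$ depending only on $K$. Separately, a standard concentration bound for subgaussian matrices (using $m/n\to\infty$) gives $\|A\|\le K_4\sqrt{m}$ with probability $1-\exp(-K_5 n)$, again with $K_4,K_5$ depending only on $K$.

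Next, I choose a constant $\varepsilon_0=\varepsilon_0(K)>0$ small enough that $4K_4^2\varepsilon_0^2/c^2<c/4$, and fix a Euclidean $\varepsilon_0$-net $\Net\subset S^{n-1}$ of cardinality at most $(3/\varepsilon_0)^n$, which is just $C(K)^n$. Applying the union bound over $\Net$, with probability at least
\[
1-\bigl(3/\varepsilon_0\bigr)^n\exp(-K_3 m)-\exp(-K_5 n)\ =\ 1-\exp(-\Omega(n))
\]
(using $m/n\to\infty$), the event $\|A\|\le K_4\sqrt{m}$ holds and simultaneously $|S(y')|\ge cm/2$ for every $y'\in\Net$. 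Now, for an arbitrary unit vector $y$, pick $y'\in\Net$ with $\|y-y'\|_2\le\varepsilon_0$. The number of indices $i$ with $|\langle\row_i(A),y-y'\rangle|>c/2$ is bounded above by $4\|A(y-y')\|_2^2/c^2\le 4K_4^2 m\varepsilon_0^2/c^2$, which is strictly smaller than $|S(y')|\ge cm/2$ by the choice of $\varepsilon_0$. Hence there exists $i\in S(y')$ with $|\langle\row_i(A),y-y'\rangle|\le c/2$, and for this $i$
\[
\langle\row_i(A),y\rangle\ \ge\ \langle\row_i(A),y'\rangle-|\langle\row_i(A),y-y'\rangle|\ \ge\ c-c/2\ =\ c/2.
\]
Taking $R:=2/c=R(K)$, this yields $R(P)\le R$ on the event described, whose probability is $1-\exp(-\Omega(n))=1-n^{-\omega(1)}$.

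The only place where care is needed is the approximation step: if one insisted on an $\varepsilon$-net with $\varepsilon\sim 1/\|A\|$, the cardinality of $\Net$ would grow like $m^{n/2}$, which would force the restrictive regime $m\gg n\log n$. The key observation that keeps $\varepsilon_0$ a constant (and the whole argument tight enough for any $m/n\to\infty$) is that we need only a \emph{single} index $i\in S(y')$ to survive the perturbation, and the $\ell_2$-bound on $A(y-y')$ via Markov's inequality controls how many $i$ can have a large perturbation, which is what lets us balance the constants.
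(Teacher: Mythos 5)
Your proof is correct and follows essentially the same route as the paper's: a constant-scale net on $S^{n-1}$, Proposition~\ref{alkjfnbalkfjnselfkjsnflkj} plus a Chernoff bound to show each net vector has $\Omega(m)$ rows with inner product at least a constant $c(K)$, the spectral norm bound $\|A\|=O(\sqrt{m})$, and a union bound over the net. The only cosmetic difference is the last step: the paper derives a contradiction by showing $\|A(y-y')\|_2\ge c\sqrt{um}$ must hold if the outer radius were too large, while you use Markov's inequality on $\|A(y-y')\|_2^2$ to bound the number of indices whose inner product is disturbed by more than $c/2$, and observe that this leaves at least one surviving index in $S(y')$; both are equivalent ways of cashing in the spectral-norm bound.
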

\begin{proof}
Let $\kappa=\kappa(K)>0$ be a small parameter to be defined later,
and let $\Net$ be a Euclidean $\kappa$--net on $S^{n-1}$ of size at most
$\big(\frac{2}{\kappa}\big)^n$ \cite[Chapter~4]{VershyninsBook}. Lemma~\ref{alkjfnbalkfjnselfkjsnflkj}
implies that for some $\tilde K>0$ depending
only on $K$ and for every $y'\in \Net$ and $i\leq m_n$,
$$
\Prob\big\{\langle \row_i(A),y'\rangle\geq \tilde K
\big\}\geq \tilde K.
$$
Define $u>0$ as the largest number satisfying
$$
\bigg(\frac{e}{u}\bigg)^{u}(1-\tilde K)^{1-u}\leq \exp(-\tilde K/2).
$$
Then, by the above, for every $y'\in \Net$,
\begin{align*}
\Prob\big\{\langle \row_i(A),y'\rangle<\tilde K\mbox{ for at least $1-u$ fraction of 
indices $i\in[m_n]$}\big\}
&\leq \bigg(\frac{e}{u}\bigg)^{um_n}(1-\tilde K)^{m_n-um_n}\\
&\leq \exp(-\tilde K m_n/2).
\end{align*}
Setting $\kappa:=\frac{\tilde K\sqrt{u}}{8}$ and using the above bound on the
size of $\Net$, we obtain that
the event
$$
\{\|A\|\leq 2\sqrt{m_n}\}
\cap
\bigcap\limits_{y'\in\Net}\big\{\langle \row_i(A),y'\rangle\geq 
\tilde K\mbox{ for at least $u m_n$ 
indices $i\in[m_n]$}\big\}
$$
has probability $1-n^{-\omega(1)}$
(recall that the $\|A\|\leq 2\sqrt{m_n}$ with probability $1-\exp(-\Omega(m_n))$;
see \cite[Chapter~4]{VershyninsBook}).
Condition on any realization of $A$ from the above event.
Assume that there exists $y\in (2\tilde K^{-1} S^{n-1})\cap P$.
Let $y'\in\Net$ be a vector satisfying $\big\|\frac{\tilde K}{2} y-y'\big\|_2\leq\kappa$.
In view of the conditioning, there are at least $u m_n$ 
indices $i\in[m_n]$
such that $\langle \row_i(A),y'-y\rangle\geq \tilde K/2$. Consequently,
$$
\|A(y-y')\|_2\geq \frac{\tilde K}{2}\sqrt{um_n}.
$$
Since $\|A\|\leq 2\sqrt{m_n}$, we get
$$
\|y-y'\|_2\geq \frac{\tilde K\sqrt{u}}{4},
$$
contradicting our choice of $\kappa$.
The contradiction shows that the outer radius of $P$ is at most $2/\tilde K$,
and the claim is verified.   
\end{proof}

\subsection{Proof of Theorem~\ref{th main upper}, and an upper bound on $\cw(P)$}

\begin{proof}[Proof of Theorem~\ref{th main upper}]
In view of Proposition~\ref{askfjnafkjnofjnfkjnasd},
with probability $1-n^{-\omega(1)}$
the outer radius of $P$ is bounded above by $\tilde K\geq 1$,
for some $\tilde K$ depending only on $K$.

\medskip

Fix any small $\varepsilon>0$.
By the above,
to complete the proof of the theorem it is sufficient
to show that with probability $1-n^{-\omega(1)}$,
every vector $z$ orthogonal to $c$ and having the Euclidean norm at most $2\tilde K$,
satisfies
$$
z+\frac{1+\varepsilon}{\sqrt{2\log(m_n/n)}}\,c\notin P.
$$
Let $k$ be the largest integer bounded above by
$$
m_n\Big(\frac{n}{m_n}\Big)^{1-\varepsilon/8}.
$$
Define $\delta_n:=\exp(-(m_n/n)^{K'})$,
where $K'>0$ is a sufficiently small constant depending only on $\varepsilon$ and $K$
(the value of $K'$ can be extracted from the argument below).
Let $\Net'$ be a Euclidean $\delta_n$--net
in $(2\tilde K\, B_2^n)\cap c^\perp$.
It is known that $\Net'$ can be chosen to have cardinality at most
$\big(\frac{4\tilde K}{\delta_n}\big)^n$ (see \cite[Chapter~4]{VershyninsBook}).

\medskip

Fix for a moment any $z'\in \Net'$, and observe that, in view of
Corollary~\ref{aksjfopifjnakmlfk} and the definition of $k$, we have
\begin{equation}\label{alskfjalkfjnalkfjn}
\Big(A\Big(z'+\frac{1+\varepsilon}{\sqrt{2\log(m_n/n)}}\,c\Big)\Big)_i
\geq 1+\varepsilon/2\quad
\mbox{for at least $k$ indices $i\leq m_n$}
\end{equation}
with probability at least
$$
1-\exp\Big(-m_n\Big(\frac{n}{m_n}\Big)^{1-\varepsilon/5}\Big).
$$
Therefore, assuming that $K'$ is sufficiently small and taking the union
bound over all elements of $\Net'$, we obtain
that with probability $1-n^{-\omega(1)}$, 
\eqref{alskfjalkfjnalkfjn} holds simultaneously for all $z'\in\Net'$.

\bigskip

Condition on any realization of $A$ such that the last event holds
and, moreover, such that $\|A\|\leq 2\sqrt{m_n}$.
Assume for a moment that there is $z\in (2\tilde K\, B_2^n)\cap c^\perp$ such that
$$
\Big(A\Big(z+\frac{1+\varepsilon}{\sqrt{2\log(m_n/n)}}\,c\Big)\Big)_i
\leq 1\quad\mbox{for all $i\leq m_n$}.
$$
In view of the above, it implies that there is a vector $z'\in\Net'$
at distance at most $\delta_n$ from $z$ such that
$$
(A(z'-z))_i\geq \frac{1}{2}\varepsilon
$$
for at least $k$ indices $i\leq m_n$.
Hence, the spectral norm of $A$ can be estimated as
$$
\|A\|\geq
\frac{\varepsilon\sqrt{k}}{2\delta_n}>2\sqrt{m_n},
$$
contradicting the bound $\|A\|\leq 2\sqrt{m_n}$.
The contradiction implies the result.
\end{proof}

The next corollary constitutes an upper bound in Corollary~\ref{mw cor nongauss}.
Since its proof is very similar to that of Corollary~\ref{mw cor lower},
we only sketch the argument.
\begin{corollary}
Let $K, n, m, A(n)$ be as in Theorem~\ref{th main upper},
and assume additionally that $\log^{3/2}(m_n/n)=o(\sqrt{n/\log n})$.
Then, $\limsup\limits_{n\to\infty}
\sqrt{2\log(m_n/n)} \cw(P)\leq 2$ almost surely.
\end{corollary}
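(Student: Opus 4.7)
The plan is to combine Theorem~\ref{th main upper}, applied to typical realizations of a random cost vector, with the outer-radius bound from Proposition~\ref{askfjnafkjnofjnfkjnasd} for the atypical ones, via a Fubini-type exchange. Throughout, let $c=c(n)$ be uniform on $S^{n-1}$ and independent from $A(n)$, and write $z^*(c):=\max\{\langle c,x\rangle:\,x\in P(n)\}$, so that $\cw(P(n))=2\,\Exp_c\,z^*(c)$.

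First I would check that $c$ satisfies the sup-norm hypothesis of Theorem~\ref{th main upper} with overwhelming probability. The standard concentration-of-measure inequality on the sphere (see \cite[Chapter~5]{VershyninsBook}) furnishes a universal constant $K_3>0$ such that
$$
\Prob_c\big\{\|c\|_\infty\leq K_3\sqrt{\log n/n}\big\}\geq 1-n^{-\omega(1)},
$$
and together with the hypothesis $\log^{3/2}(m/n)=o(\sqrt{n/\log n})$ this yields $\log^{3/2}(m/n)\,\|c\|_\infty=o(1)$ on a ``good'' event $\Event_c^{good}$ of $c$-probability $1-n^{-\omega(1)}$. For any deterministic realization of $c\in\Event_c^{good}$ and any fixed small $\varepsilon>0$, Theorem~\ref{th main upper} gives
$$
\Prob_A\big\{\sqrt{2\log(m/n)}\,z^*(c)>1+\varepsilon\big\}\leq n^{-\omega(1)}.
$$

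Next I would apply Fubini to trade the conditioning from $c$ to $A$. Denoting by $\Event^*$ the joint event $\{\sqrt{2\log(m/n)}\,z^*(c)>1+\varepsilon\}$ and integrating the previous display against $c$ (using the trivial bound on the complement $(\Event_c^{good})^c$), one obtains $\Prob_{A,c}(\Event^*)\leq n^{-\omega(1)}$, and hence $\Exp_A\,\Prob_c(\Event^*\mid A)\leq n^{-\omega(1)}$. Markov's inequality then implies that, with probability $1-n^{-\omega(1)}$ over $A$, we have $\Prob_c(\Event^*\mid A)\leq n^{-10}$. On the intersection with the event $\{R(P)\leq \tilde K\}$ provided by Proposition~\ref{askfjnafkjnofjnfkjnasd}, the pointwise estimate $z^*(c)\leq R(P)\|c\|_2=R(P)$ yields
$$
\Exp_c\,z^*(c)\leq \frac{1+\varepsilon}{\sqrt{2\log(m/n)}}+\tilde K\,\Prob_c(\Event^*\mid A)\leq \frac{1+\varepsilon}{\sqrt{2\log(m/n)}}+\tilde K\, n^{-10},
$$
so that $\sqrt{2\log(m/n)}\,\cw(P)\leq 2(1+\varepsilon)+o(1)$ with $A$-probability $1-n^{-\omega(1)}$. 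The almost-sure upper bound then follows by Borel--Cantelli.

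The main obstacle I anticipate is the passage from Theorem~\ref{th main upper}, which controls $z^*(c)$ only for cost vectors lying in the delocalized set $\Event_c^{good}$, to a statement about the \emph{average} over all $c$ appearing in $\cw(P)$. The outer-radius bound of Proposition~\ref{askfjnafkjnofjnfkjnasd} is precisely what absorbs the contribution of the remaining $c$'s; the condition $\log^{3/2}(m/n)=o(\sqrt{n/\log n})$ is exactly what is needed so that delocalization of a uniform $c\in S^{n-1}$ matches the sup-norm hypothesis of Theorem~\ref{th main upper} with overwhelming probability.
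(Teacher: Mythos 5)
Your proof is correct and follows a genuinely different route from the paper's. The paper proves the corollary by contradiction: it assumes the $\limsup$ exceeds $2+\delta$ with positive probability, conditions on a realization of the polyhedra lying in that bad event, and then applies the \emph{second} Borel--Cantelli lemma to the independent random cost vectors $c(n_k)$ along a subsequence to conclude that $\sqrt{2\log(m/n)}\,z^*$ exceeds $1+\tilde\delta$ infinitely often with $\Prob_c$-probability one; this is then contradicted by the first Borel--Cantelli lemma applied to the probability bound of Theorem~\ref{th main upper}. Your argument dispenses with the contradiction scheme entirely: you bound $\Prob_{A,c}(\Event^*)$ via Fubini, pass to $\Prob_c(\Event^*\mid A)$ by Markov, and then directly estimate the conditional expectation $\Exp_c z^*(c)$ on a high-probability set of matrices using the outer radius bound for the rare cost vectors. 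Both routes rely on exactly the same three ingredients (Theorem~\ref{th main upper}, Proposition~\ref{askfjnafkjnofjnfkjnasd}, and sphere concentration for $\|c\|_\infty$), but your direct bookkeeping is arguably cleaner and more transparent than the paper's contrapositive. One small inaccuracy: for a \emph{fixed} universal constant $K_3$, the event $\{\|c\|_\infty\leq K_3\sqrt{\log n/n}\}$ has $c$-probability only $1-n^{-\Omega(K_3^2)}$, which is polynomially (not superpolynomially) close to one. This does not damage the argument, since (i) the hypothesis $\log^{3/2}(m/n)=o(\sqrt{n/\log n})$ leaves slack allowing a threshold $\omega(\sqrt{\log n/n})$ that still forces $\log^{3/2}(m/n)\|c\|_\infty\to 0$ while pushing the failure probability to $n^{-\omega(1)}$, and (ii) even a polynomial bound suffices for the Borel--Cantelli conclusion. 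You should either let $K_3$ grow slowly with $n$ or drop the $n^{-\omega(1)}$ claim for this particular event.
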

\begin{proof}
In view of Proposition~\ref{askfjnafkjnofjnfkjnasd}, there is $\tilde K>0$
depending only on $K$ such that the event
$$
\tilde \Event:=\big\{\limsup\limits_{n\to\infty} R(P(n))\leq \tilde K\big\}
$$
has probability one, where, as before, $R(P(n))$ denotes the
outer radius of the polyhedron $P(n)$.
Set
$$
\Event_\delta
:=\big\{\limsup\limits_{n\to\infty} \sqrt{2\log(m_n/n)}\cw(P(n))\geq 2+\delta\big\}\cap
\tilde \Event,
\quad \delta>0.
$$
We will prove the corollary by contradiction.
Assume that the assertion of the corollary does not hold.
Then, in view of the above definitions, there is $\delta>0$
such that
$$
\Prob\big(\Event_\delta\big)\geq \delta.
$$
Condition for a moment on any
realization of the polyhedra $P(n)$, $n\in \N$, from $\Event_\delta$,
and let $(n_k)_{k=1}^\infty$ be an increasing sequence of integers
such that $\sqrt{2\log(m_{n_k}/n_k)}\cw(P(n_k))\geq 2+\delta/2$ and
$R(P(n_k))\leq 2\tilde K$ for every $k$.
Assume that $c(n)$, $n\geq 1$, are mutually independent uniform random unit vectors
which are also independent from the matrices $\{A(n),\;n\in\N\}$.
The conditions
$$
\cw(P(n_k))\geq
\frac{2+\delta/2}{\sqrt{2\log(m_{n_k}/n_k)}}\quad \mbox{and}\quad
R(P(n_k))\leq 2\tilde K
$$
imply that for some $\tilde \delta>0$ depending only on $\delta$
and $\tilde K$,
$$
\Prob_c\bigg\{
\max\{\langle c(n_k),x\rangle:\;x\in P(n_k)\}> \frac{1+\tilde\delta}{\sqrt{2\log(m_{n_k}/n_k)}}
\bigg\}
\geq \tilde \delta,
$$
where $\Prob_c$ is the conditional probability given the realization of $P(n)$, $n\in\N$,
from $\Event_\delta$.
By the Borel--Cantelli lemma, the last assertion yields
$$
\Prob_c\bigg\{
\limsup\limits_{n\to\infty}
\sqrt{2\log(m_n/n)}\max\{\langle c(n),x\rangle:\;x\in P(n)\}\geq 1+\tilde\delta
\bigg\}=1.
$$
Removing the conditioning on $\Event_\delta$, we arrive at the estimate
\begin{equation}\label{alsfiuheofiuyhofifas}
\Prob\bigg\{
\limsup\limits_{n\to\infty}
\sqrt{2\log(m_n/n)}\max\{\langle c(n),x\rangle:\;x\in P(n)\}\geq 1+\tilde \delta
\bigg\}\geq \tilde \delta.
\end{equation}
The standard concentration inequality on the sphere
\cite[Chapter~5]{VershyninsBook} and the conditions on $m_n$ imply that
$\log^{3/2}(m_n/n)\|c\|_\infty=o(1)$ with probability $1-n^{-\omega(1)}$,
and therefore, by Theorem~\ref{th main upper},
$$
\Prob\big\{\sqrt{2\log(m_n/n)}\max\{\langle c(n),x\rangle:\;x\in P(n)\}
\leq 1+\tilde \delta/2
\big\}\geq 1-n^{-\omega(1)}.
$$
The latter contradicts \eqref{alsfiuheofiuyhofifas}, and the result follows.
\end{proof}

\section{The Gaussian setting}\label{section Gaussian}

In this section, we prove Theorem~\ref{th main gauss} and Corollary~\ref{mw cor gauss}.
Note that in view of rotational invariance of the Gaussian distribution,
we can assume without loss of generality that the cost vectors
in Theorem~\ref{th main gauss} are of the form $c(n)=(\frac{1}{\sqrt{n}},
\dots,\frac{1}{\sqrt{n}})$.
Thus, in the regime $\log^{3/2}m_n=o(\sqrt{n})$, the statement is already
covered by the
more general results of Sections~\ref{alksjfalkfjlskjnl}
and~\ref{akjshbfakjhfbkqjwhb}.

\subsection{The outer radius of random Gaussian polytopes}\label{outersection}

In this subsection, we consider the bound on the outer radius of random Gaussian
polytopes.
The next result immediately implies upper bounds on $z^*$ and $\cw(P)$
in Theorem~\ref{th main gauss} and Corollary~\ref{mw cor gauss}
in the entire range $m_n=\omega(n)$. 
We anticipate that
the statement is known although we are not aware of a reference,
and for that reason provide a proof.
\begin{prop}[Outer radius of the feasible region in the Gaussian setting]\label{prop main 3}
Let $m_n$ satisfy $\lim\limits_{n\to\infty}\frac{m_n}{n}=\infty$,
and for every $n$ let $A$ be an $m_n\times n$ random matrix
with i.i.d standard Gaussian entries.
Then, denoting by $R(P(n))$ the outer radius of the feasible region,
for any constant $\varepsilon>0$ we have
$$
\Prob\big\{\sqrt{2\log(m_n/n)}\,R(P(n)) \leq 1+\varepsilon\big\}
\geq 1-n^{-\omega(1)}.
$$
In particular,
$\limsup\limits_{n\to\infty} \sqrt{2\log(m_n/n)}\,R(P(n)) \leq 1$ almost surely.
\end{prop}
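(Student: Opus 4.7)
The plan is to work with the dual characterization
\begin{equation*}
R(P(n))^{-1} \;=\; \inf_{u \in S^{n-1}} \max_{i \leq m} \langle \row_i(A), u\rangle,
\end{equation*}
valid whenever the infimum is positive, and to show that with probability $1 - n^{-\omega(1)}$ this infimum is at least $\sqrt{2\log(m/n)}/(1+\varepsilon)$. Geometrically this says that for every $u \in S^{n-1}$ some half-space constraint is violated by the test vector $(1+\varepsilon)^{-1}\sqrt{2\log(m/n)}\,u$.

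First, I would set the intermediate threshold $t := (1-\varepsilon/4)\sqrt{2\log(m/n)}$, lying strictly between $\sqrt{2\log(m/n)}/(1+\varepsilon)$ and $\sqrt{2\log(m/n)}$. For fixed $u \in S^{n-1}$, the scalars $\langle \row_i(A), u\rangle$ are i.i.d.\ $N(0,1)$, so the standard Mills-ratio bound gives $\Prob\{N(0,1) \geq t\} \geq c_1(n/m)^{(1-\varepsilon/4)^2}/\sqrt{\log(m/n)}$; by independence
\begin{equation*}
\Prob\Big\{\max_{i \leq m}\langle \row_i(A),u\rangle < t\Big\} \;\leq\; \exp\!\Big(-c_2\,n\,(m/n)^{\alpha}/\sqrt{\log(m/n)}\Big),\quad \alpha := 1-(1-\varepsilon/4)^2>0.
\end{equation*}
I then take a Euclidean $\delta$-net $\Net\subset S^{n-1}$ of cardinality at most $(3/\delta)^n$, with $\delta$ of order $\varepsilon\sqrt{\log(m/n)/m}$, so that on the high-probability event $\|A\|\leq 2(\sqrt{m}+\sqrt{n})$ the approximation error $\|A\|\delta$ is at most $(\varepsilon/8)\sqrt{2\log(m/n)}$. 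A union bound together with the point tail above yields an event of probability $1-n^{-\omega(1)}$ on which every $u'\in\Net$ admits some $i^*(u')$ with $\langle \row_{i^*}(A),u'\rangle\geq t$. For arbitrary $u\in S^{n-1}$, choosing $u'\in\Net$ with $\|u-u'\|\leq\delta$, we obtain $\langle \row_{i^*}(A),u\rangle\geq t-\|A\|\delta\geq \sqrt{2\log(m/n)}/(1+\varepsilon)$, as desired.

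The main obstacle is making the bookkeeping of the union bound close. The net has size $\exp(\Theta(n\log m))$, while the pointwise bound contributes $\exp(-\Theta(n(m/n)^{\alpha}/\sqrt{\log(m/n)}))$, and the second exponent beats the first only when $(m/n)^{\alpha}$ outgrows $\log m\cdot\sqrt{\log(m/n)}$, which is automatic once $m/n$ grows faster than any polylogarithm in $n$. In the marginal regime $m = n^{1+o(1)}$ one instead has $\log^{3/2}(m/n)=o(\sqrt{n})$; in this case the subgaussian upper-bound machinery of Section~\ref{akjshbfakjhfbkqjwhb} applies to any sufficiently delocalized unit cost vector, and inspection of that proof reveals its failure probability is in fact $\exp(-\omega(n))$ (rather than merely $n^{-\omega(1)}$). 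This is strong enough to union-bound over a $\delta$-net of unit cost vectors (with $\delta$ calibrated via the deterministic outer-radius bound of Proposition~\ref{askfjnafkjnofjnfkjnasd} applied as a crude a priori estimate), thereby deducing the desired estimate on $R(P) = \sup_{c\in S^{n-1}}z^*(c)$.
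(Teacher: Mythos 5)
Your direct argument is structurally the right kind (pointwise Gaussian tail bound $+$ $\delta$-net $+$ union bound), and you correctly identified the obstacle: with the single-coordinate event $\{\max_i\langle\row_i(A),u\rangle\geq t\}$, the pointwise failure probability $\exp\!\big(-c_2\,n(m/n)^\alpha/\sqrt{\log(m/n)}\big)$ cannot beat the net size $\exp(\Theta(n\log m))$ when $m/n$ grows only polylogarithmically in $n$. Since $\alpha=1-(1-\varepsilon/4)^2\approx\varepsilon/2$ is small for small $\varepsilon$, this is not a corner case; the argument as written genuinely breaks down for a wide range of moderate ratios $m/n$.

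The paper resolves this with an order-statistics strengthening that you are missing. Rather than demanding that \emph{some} coordinate of $Ay'$ exceed the threshold, the paper demands that at least $k$ coordinates exceed $(1-\varepsilon/2)\sqrt{2\log(m/n)}$, with $k$ the largest integer $\leq m(n/m)^{1-\varepsilon/8}=n(m/n)^{\varepsilon/8}$. Via the bound on $G_{(m-k)}$ (Corollary~\ref{aljhsfaberoguehbouerh}, from the binomial estimate \eqref{aowasdfiufnofinogugbowu}), the per-point failure probability is then $\exp(-(m/k)^{\Omega(1)}k)\leq\exp(-k)=\exp(-n(m/n)^{\varepsilon/8})$. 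This is $\exp(-\omega(n))$ with explicit polynomial dependence on $m/n$, so the union bound closes against a $\delta$-net of cardinality $\exp\!\big(n(m/n)^{K}\big)$ for $\delta=\exp(-(m/n)^K)$ with $K<\varepsilon/8$, uniformly in the regime $m/n\to\infty$. The same $k$ coordinates serve a second purpose in the net approximation: if $(A(y'-y))_i\geq(\varepsilon/2)\sqrt{2\log(m/n)}$ for $k$ indices, then $\|A(y'-y)\|_2\geq\sqrt{k}\cdot(\varepsilon/2)\sqrt{2\log(m/n)}$ while $\|y'-y\|_2\leq\delta$, contradicting $\|A\|\leq 2\sqrt m$. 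So the ``many violated coordinates'' trick simultaneously boosts the tail bound and replaces the operator-norm step by an $\ell^2$-norm step, giving room to take $\delta$ exponentially small.

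Your proposed patch for the slow-growth regime (re-route through Theorem~\ref{th main upper}, extract an $\exp(-\omega(n))$ bound, and net over $c\in S^{n-1}$ at a scale set by the constant a priori radius bound of Proposition~\ref{askfjnafkjnofjnfkjnasd}) is plausible in outline, and the ingredients do assemble: by rotational invariance of the Gaussian law, the delocalization hypothesis on $c$ costs nothing pointwise, and the dominant failure term inside the proof of Theorem~\ref{th main upper} is indeed $\exp\!\big(-n(m/n)^{\Omega(\varepsilon)}\big)$, which dominates $n\log\log(m/n)$. But this remains a sketch: you must explicitly invoke rotational invariance to dispose of the $\|c\|_\infty$ hypothesis on net points (otherwise Theorem~\ref{th main upper} simply does not apply to arbitrary $c'$), and the verification that the net of scale $\delta'\asymp1/\sqrt{\log(m/n)}$ is admissible needs to be written out. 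The paper's single argument is both shorter and uniform; you should internalize the order-statistics device as the cleaner route.
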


\bigskip

The following approximation of the standard Gaussian distribution
is well known (see, for example, \cite[Volume I, Chapter VII, Lemma 2]{feller1991introduction}):
\begin{lemma}
Let $g$ be a standard Gaussian variable.
Then for every $t>0$,
$$
\Big(\frac{1}{t}-\frac{1}{t^3}\Big)\exp(-t^2/2)\leq 
\Prob\{g\geq t\}\leq \frac{1}{t}\exp(-t^2/2).
$$
\end{lemma}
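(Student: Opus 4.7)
The plan is to derive both inequalities from the integral representation of the Gaussian tail through elementary calculus manipulations, in the spirit of the classical Mills-ratio bounds. The starting point is the identity $\Prob\{g\geq t\} = \frac{1}{\sqrt{2\pi}}\int_t^\infty e^{-s^2/2}\,ds$, so that both bounds reduce to estimating the incomplete Gaussian integral (the missing $1/\sqrt{2\pi}$ prefactor is presumably implicit in the displayed statement).

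For the upper bound I would exploit the trivial inequality $1\leq s/t$ valid for $s\geq t$, giving
$$\int_t^\infty e^{-s^2/2}\,ds \leq \frac{1}{t}\int_t^\infty s\,e^{-s^2/2}\,ds = \frac{1}{t}\,e^{-t^2/2},$$
which delivers the desired upper bound at once.

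For the lower bound, the key step is to identify an antiderivative that isolates the dominant term $(1/t)e^{-t^2/2}$ with a non-negative remainder. A direct computation shows
$$\frac{d}{ds}\bigg[\bigg(\frac{1}{s}-\frac{1}{s^3}\bigg)e^{-s^2/2}\bigg] = \bigg(\frac{3}{s^4}-1\bigg)e^{-s^2/2}.$$
Integrating from $t$ to $\infty$ and rearranging yields
$$\int_t^\infty e^{-s^2/2}\,ds = \bigg(\frac{1}{t}-\frac{1}{t^3}\bigg)e^{-t^2/2} + 3\int_t^\infty \frac{e^{-s^2/2}}{s^4}\,ds,$$
and discarding the strictly positive remainder on the right produces the claimed lower bound.

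No substantive obstacle is expected, as this is a routine textbook calculation; the only mildly inventive ingredient is guessing the antiderivative $(1/s - 1/s^3)e^{-s^2/2}$ so that differentiating it yields $-e^{-s^2/2}$ modulo a non-negative correction. A more systematic route to the same identity is to apply integration by parts to $\int_t^\infty e^{-s^2/2}\,ds = \int_t^\infty s^{-1}\cdot s\,e^{-s^2/2}\,ds$, and then a second integration by parts on the remainder, which automatically produces the $1/t-1/t^3$ expression together with the positive tail $3\int_t^\infty s^{-4}e^{-s^2/2}\,ds$.
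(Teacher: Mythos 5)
The paper does not prove this lemma; it cites Feller (Volume I, Chapter VII, Lemma 2), and your argument is precisely Feller's: bound $\int_t^\infty e^{-s^2/2}\,ds$ from above by inserting $1\le s/t$, and from below by differentiating the antiderivative $(1/s-1/s^3)e^{-s^2/2}$ (equivalently, two integrations by parts) and discarding the remaining non-negative tail. The calculus is correct.

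The caveat you flag about the $1/\sqrt{2\pi}$ deserves to be upgraded from a parenthetical to a genuine correction, because as printed the lower bound is false. Since $\Prob\{g\ge t\}=\frac{1}{\sqrt{2\pi}}\int_t^\infty e^{-s^2/2}\,ds$, what your computation actually establishes is
$$
\frac{1}{\sqrt{2\pi}}\Big(\frac{1}{t}-\frac{1}{t^3}\Big)e^{-t^2/2}\;\le\;\Prob\{g\ge t\}\;\le\;\frac{1}{\sqrt{2\pi}}\cdot\frac{1}{t}\,e^{-t^2/2}.
$$
Dropping the prefactor $\frac{1}{\sqrt{2\pi}}<1$ from the right-hand side only weakens the upper bound, so that half of the displayed statement is still true (merely non-sharp). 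Dropping it from the left-hand side strengthens the lower bound past what is true: at $t=2$ one has $(\tfrac12-\tfrac18)e^{-2}\approx 0.051$ while $\Prob\{g\ge 2\}\approx 0.023$. So the printed lower bound should carry the $\frac{1}{\sqrt{2\pi}}$; this is a slip in the paper's statement, not a gap in your proof. The omission is harmless for the way the lemma is used (in the tail estimate that immediately follows, the universal constant $\frac{1}{\sqrt{2\pi}}$ simply gets absorbed into the $\Omega(1)$ exponent), but the lemma itself needs the correction that your proof supplies.
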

Let $G$ be a standard Gaussian random vector in $\R^{m_n}$, and $G_{(1)} \leq G_{(2)}\leq \dots\leq G_{(m_n)}$
be the non-decreasing rearrangement of its components.
Let $1\leq k\leq m_n/2$. Then, from the above lemma, for every $t>0$ we have 
\begin{equation}\label{aowasdfiufnofinogugbowu}
\Prob\big\{G_{(m_n-k)} \leq t\big\}\leq {m_n\choose k}\Prob\{g\leq t\}^{m_n-k}
\leq \Big(\frac{em_n}{k}\Big)^k
\Big(1-\Big(\frac{1}{t}-\frac{1}{t^3}\Big)\exp(-t^2/2)\Big)^{m_n-k}.
\end{equation}
As a corollary, we obtain the following deviation estimate:
\begin{corollary}\label{aljhsfaberoguehbouerh}
Fix any $\varepsilon\in(0,1)$.
Let sequences of positive integers $m_n$, $n\geq 1$, and
$k_n$, $n\geq 1$,
satisfy $m_n=\omega(n)$, $k_n=\omega(1)$, and $k_n=o(m_n)$.
Further, for each $n$, let $G=G(n)$ be a standard Gaussian vector in $\R^{m_n}$.
Then
$$
\Prob\big\{G_{(m_n-k_n)} \leq (1-\varepsilon)\sqrt{2\log (m_n/k_n)}\big\}=
\exp\Big(-\Big(\frac{m_n}{k_n}\Big)^{\Omega(1)}k_n\Big).
$$
\end{corollary}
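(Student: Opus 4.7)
The plan is to apply the union--type bound \eqref{aowasdfiufnofinogugbowu} directly with $t=(1-\varepsilon)\sqrt{2\log(m/k)}$ and then show that the second factor decays superexponentially in $k$, overwhelming the binomial prefactor $\big(\frac{em}{k}\big)^k$. Under the hypotheses $k=\omega(1)$ and $k=o(m)$, the ratio $m/k$ tends to infinity, so $t\to\infty$ and in particular the elementary Mills--ratio lower bound $\Prob\{g\geq t\}\geq\big(\frac{1}{t}-\frac{1}{t^3}\big)\exp(-t^2/2)$ is nontrivial and can be simplified.

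First I would compute the two quantities appearing in \eqref{aowasdfiufnofinogugbowu}. Substituting the chosen $t$, one has $\exp(-t^2/2)=(k/m)^{(1-\varepsilon)^2}$, while $\frac{1}{t}-\frac{1}{t^3}=\frac{1+o(1)}{\sqrt{2\log(m/k)}}$ as $m/k\to\infty$. Hence
$$
p:=\Prob\{g\geq t\}\geq \frac{1+o(1)}{\sqrt{2\log(m/k)}}\,\Big(\frac{k}{m}\Big)^{(1-\varepsilon)^2}.
$$
Using $(1-p)^{m-k}\leq \exp(-(m-k)p)$ and the identity
$
(m-k)p = (1+o(1))\frac{k}{\sqrt{2\log(m/k)}}\Big(\frac{m}{k}\Big)^{1-(1-\varepsilon)^2},
$
and observing that $1-(1-\varepsilon)^2=2\varepsilon-\varepsilon^2>0$, one arrives at
$$
(1-p)^{m-k}\leq \exp\!\left(-(1+o(1))\,\frac{k}{\sqrt{2\log(m/k)}}\Big(\frac{m}{k}\Big)^{2\varepsilon-\varepsilon^2}\right).
$$

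Finally I would combine this with the prefactor, noting $\big(\frac{em}{k}\big)^k=\exp\big(k(1+\log(m/k))\big)$. Plugging into \eqref{aowasdfiufnofinogugbowu} gives
$$
\Prob\{G_{(m-k)}\leq t\}\leq \exp\!\left(k\bigl(1+\log(m/k)\bigr)-(1+o(1))\,\frac{k}{\sqrt{2\log(m/k)}}\Big(\frac{m}{k}\Big)^{2\varepsilon-\varepsilon^2}\right).
$$
For any fixed $\alpha$ with $0<\alpha<2\varepsilon-\varepsilon^2$ the polynomial factor $(m/k)^{2\varepsilon-\varepsilon^2}/\sqrt{\log(m/k)}$ eventually exceeds $(m/k)^{\alpha}$ and dominates the logarithmic term $\log(m/k)$ coming from the binomial prefactor. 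Hence, for all sufficiently large $n$,
$$
\Prob\{G_{(m-k)}\leq t\}\leq \exp\!\left(-k\Big(\tfrac{m}{k}\Big)^{\alpha}\right),
$$
which yields the stated bound $\exp\big(-(m/k)^{\Omega(1)}k\big)$.

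There is no real conceptual obstacle: once inequality \eqref{aowasdfiufnofinogugbowu} is in hand, everything reduces to tracking the asymptotics of the two factors. The only mildly delicate point is verifying that the polynomial gain $(m/k)^{2\varepsilon-\varepsilon^2}$ in the exponent indeed absorbs both the $\sqrt{\log(m/k)}$ denominator and the binomial prefactor $k\log(m/k)$; this is immediate from $m/k\to\infty$ and the positivity of $2\varepsilon-\varepsilon^2$, and it is the reason why the final bound can be written with an $\Omega(1)$ exponent independent of $\varepsilon$ (at the cost of a strictly smaller implicit constant).
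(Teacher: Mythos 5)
Your proposal is correct and follows essentially the same route as the paper: substitute $t=(1-\varepsilon)\sqrt{2\log(m/k)}$ into inequality \eqref{aowasdfiufnofinogugbowu} and observe that the factor $(1-p)^{m-k}$, with $p\asymp (k/m)^{(1-\varepsilon)^2}/\sqrt{\log(m/k)}$, decays like $\exp(-k(m/k)^{2\varepsilon-\varepsilon^2+o(1)})$, which dominates the binomial prefactor $(em/k)^k$. You merely spell out the asymptotic bookkeeping that the paper leaves implicit (and one could note that your ``$1+o(1)$'' for $\tfrac{1}{t}-\tfrac{1}{t^3}$ should carry the constant $\tfrac{1}{1-\varepsilon}$, though this is harmless for the $\Omega(1)$ conclusion).
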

\begin{proof} 
Applying \eqref{aowasdfiufnofinogugbowu}, we get
\begin{align*}
\Prob&\big\{G_{(m_n-k_n)} \leq (1-\varepsilon)\sqrt{2\log (m_n/k_n)}\big\}\\
&\leq 
\Big(\frac{em_n}{k_n}\Big)^{k_n}
\bigg(1-\Big(\frac{1}{(1-\varepsilon)(2\log (m_n/k_n))^{1/2}}
-\frac{1}{(1-\varepsilon)^3(2\log (m_n/k_n))^{3/2}}
\Big)
\Big(\frac{k_n}{m_n}\Big)^{(1-\varepsilon)^2}\bigg)^{m_n-k_n}\\
&=\exp\Big(-\Big(\frac{m_n}{k_n}\Big)^{\Omega(1)}k_n\Big),
\end{align*}
implying the claim.
\end{proof}

\begin{proof}[Proof of Proposition~\ref{prop main 3}]
Fix any $\varepsilon>0$.
Observe that the statement is equivalent to showing that with probability
$1-n^{-\omega(1)}$
the matrix $A$ has the property that for
every $y\in S^{n-1}$ there is $i\leq m_n$ with
$(Ay)_i\geq (1-\varepsilon)\sqrt{2\log (m_n/n)}$.

The argument below is similar to the proof of Proposition~\ref{askfjnafkjnofjnfkjnasd}.
Let $k_n$ be the largest integer bounded above by
$$
m_n\Big(\frac{n}{m_n}\Big)^{1-\varepsilon/8}.
$$
Define $\delta_n:=\exp(-(m_n/n)^{K})$,
where $K>0$ is a sufficiently small constant depending only on $\varepsilon$.
Let $\Net$ be a Euclidean $\delta_n$--net
in $S^{n-1}$ of cardinality at most
$\big(\frac{2}{\delta_n}\big)^n$ (see \cite[Chapter~4]{VershyninsBook}).

\bigskip

Fix for a moment any $y'\in \Net$, and observe that, in view of
Corollary~\ref{aljhsfaberoguehbouerh} and the definition of $k_n$, we have
\begin{equation}\label{akjnfoiejnfoijn}
(Ay')_i\geq (1-\varepsilon/2)\sqrt{2\log (m_n/n)}\quad
\mbox{for at least $k_n$ indices $i\leq m_n$}
\end{equation}
with probability at least
$1-\exp\big(-\big(\frac{m_n}{k_n}\big)^{\Omega(1)}k_n\big)\geq 1-\exp(-k_n)$.
Therefore, assuming that $K$ is sufficiently small and taking the union
bound over all elements of $\Net$, we obtain
that with probability $1-n^{-\omega(1)}$, 
\eqref{akjnfoiejnfoijn} holds simultaneously for all $y'\in\Net$.

\bigskip

Condition on any realization of $A$ such that the last event holds
and, moreover, such that $\|A\|\leq 2\sqrt{m_n}$ (recall
that the latter occurs with probability $1-\exp(-\Omega(m_n))$;
see \cite[Chapter~4]{VershyninsBook}).
Assume for a moment that there is $y\in S^{n-1}$ such that
$(Ay)_i\leq (1-\varepsilon)\sqrt{2\log (m_n/n)}$ for all $i\leq m_n$.
In view of the above, it implies that there is a vector $y'\in\Net$
at distance at most $\delta_n$ from $y$ such that
$$
(A(y'-y))_i\geq \frac{1}{2}\varepsilon\sqrt{2\log (m_n/n)}
$$
for at least $k_n$ indices $i\leq m_n$.
Hence, the spectral norm of $A$ can be estimated as
$$
\|A\|\geq
\frac{\varepsilon\sqrt{2k_n\log (m_n/n)}}{2\delta_n}>2\sqrt{m_n},
$$
contradicting the bound $\|A\|\leq 2\sqrt{m_n}$.
The contradiction shows that for every $y\in S^{n-1}$
there is $i\leq m_n$ such that $(Ay)_i\geq (1-\varepsilon)\sqrt{2\log (m_n/n)}$.
The result follows.
\end{proof}

\subsection{Proof of Theorem~\ref{th main gauss} and Corollary~\ref{mw cor gauss}}

As we have noted, the upper bounds in Theorem~\ref{th main gauss} and Corollary~\ref{mw cor gauss}
follow readily from Proposition~\ref{prop main 3},
and so we only need to verify the lower bounds.
In view of results of Section~\ref{alksjfalkfjlskjnl}
which also apply in the Gaussian setting, we may assume that
$m_n$ satisfies $m_n=n^{\omega(1)}$.

\medskip

Regarding the proof of Theorem~\ref{th main gauss},
since $Ac$ is a standard Gaussian vector, it follows 
from the formula for the Gaussian density that
$$
\Prob\big\{\|Ac\|_\infty\leq (1+\varepsilon)\sqrt{2\log m_n}\big\}
\geq 1-m_n^{-\Omega(1)}=1-n^{-\omega(1)},
$$
whereas, by the assumption $m_n=n^{\omega(1)}$, we have $\sqrt{2\log m_n}=(1+ o(1))\sqrt{2\log (m_n/n)}$.
This implies the result.

\medskip

The proof of the lower bound in Corollary~\ref{mw cor gauss}
follows exactly the same scheme as the first part of the
proof of Corollary~\ref{mw cor lower}.

\section{Numerical experiments}\label{numericssection}

Our results described in the previous sections
naturally give rise to the questions:
(a) {\it How close to each other are the asymptotic estimates of the optimal objective
value and
empirical observations?}, (b)
{\it What is the asymptotic distribution and standard deviation of $z^*$?}, and
(c) {\it How does the algorithm in the proof of Theorem ~\ref{th main lower}  perform in practice?} We discuss these questions in the following subsections, and make conjectures while providing numerical evidence.
We use Gurobi 10.0.1 for solving instances of the LP (\ref{lp_max}), and set all Gurobi parameters to default. 

\subsection{Magnitude of the optimal objective value}



Below, we investigate the quality of approximation of
the optimal objective value $z^*$ by the asymptotic bound
$ab := (2\log(m/n))^{-\frac{1}{2} }$ given in Theorems~\ref{th main nongauss}
and~\ref{th main gauss}. 
The empirical outcomes are obtained through multiple runs of the LP \eqref{lp_max} under various choices of parameters.
We consider two distributions of the entries of $A$:
either $A$ is standard Gaussian or its entries are i.i.d Rademacher ($\pm 1$)
random variables.
In either case and for different choices of $m$, we sample the random LP \eqref{lp_max} taking the sample size $50$ and letting $c$ be
the vector of i.i.d Rademacher variables rescaled to have unit Euclidean norm.
The cost vector is generated once and remains the same for 
each of the $50$ instances of the LP within a sample.

\begin{table}[h!]
    \centering
    \begin{tabular}{cccccc}
$m$  & $n$ & $ab$ & $\hat{\mu}$ & relative gap($\%$)\\
\hline  \\
1000 & 50 & 0.40853 & 0.50626 &23.92 \\
2000 & 50 & 0.36816 & 0.44119 &19.83 \\
6000 & 50 & 0.32317 & 0.37256 &15.28 \\
10000 & 50& 0.30719 & 0.35176 &14.50 \\
20000 & 50& 0.28888 & 0.32473 &12.41 \\

    \end{tabular}
    \caption{Asymptotic versus empirically observed 
    optimal objective value in the Gaussian setting (the sample size = $50$)}
    \label{tab: opt_obj_nonAsym}
\end{table}
As is shown in Tables~\ref{tab: opt_obj_nonAsym}
and~\ref{tab: opt_obj_nonAsym_Rade}, for a fixed value of $n=50$, as the number of constraints $m$ progressively increases in magnitude,
the {\it relative gap}
between $ab$ and the sample mean
$\hat{\mu}$ of the optimal objective values,
defined as
$$\left(\frac{ | ab -  \hat{\mu} | }{ ab } \times 100 \right)\%, $$
decreases. 

\begin{table}[h!]
    \centering
    \begin{tabular}{cccccc}
$m$  & $n$ & $ab$ & $\hat{\mu}$ & relative gap($\%$)\\
\hline  \\
1000 & 50 & 0.40853 & 0.50369  & 23.29 \\
2000 & 50 & 0.36816 & 0.43801 & 18.97 \\
6000 & 50 & 0.32317 & 0.37200 & 15.11 \\
10000 & 50 & 0.30719 & 0.35220 & 14.65\\
20000 & 50 & 0.28888 & 0.32856 & 13.73\\

    \end{tabular}
    \caption{Asymptotic versus empirically observed 
    optimal objective value in the Rademacher setting (the sample size = $50$)}
    \label{tab: opt_obj_nonAsym_Rade}
\end{table}

\subsection{Structural assumptions on the cost vector}\label{secstructural}
In this subsection, we carry out a numerical study of the
technical conditions on the cost vectors from Theorem~\ref{th main nongauss}.
Roughly speaking, those conditions stipulate that
the cost vector has significantly more than $\log^{3/2}(m/n)$
``essentially non-zero'' components. Since Theorem~\ref{th main nongauss}
is an asymptotic result, it is not at all clear what practical
assumptions on the cost vectors should be imposed to guarantee
that the resulting optimal objective value is close to the asymptotic bound.

\begin{table}[h!]
    \centering
    \begin{tabular}{ccc}
        $k$ & $\hat{\mu}(k)$ & relative gap ($\%$) \\
        \hline \\
1 & 0.429907 & 15.08\\
2 & 0.454357 & 10.25\\
3 & 0.459387 & 9.25\\
4 & 0.479007 & 5.38\\
5 & 0.481839 & 4.82\\
6 & 0.486141 & 3.97\\
7 & 0.479635 & 5.25\\
8 & 0.493168 & 2.58\\
9 & 0.490829 & 3.04\\
10 & 0.498035& 1.62\\
    \end{tabular}
    \caption{The sample mean
    of the optimal objective value corresponding to $c(n,k)$ for a given value of 
    the parameter $k$. The random matrix $A$
    has dimensions $m=1000$ and $n=50$, with each entry
    equidistributed with
    the product of Bernoulli($1/2$) and N($0$,$2$) variables. The sample size = $50$.
    }
    \label{tab:cost_obj}
\end{table}

In the following experiment, we consider a random coefficient matrix $A$
with i.i.d subgaussian components equidistributed with
the product of Bernoulli($1/2$) and N($0$,$2$) random variables.
Note that with this definition the entries have zero mean and unit variance.
Further, we consider a family of cost vectors $c(n,k)$ parametrized
by an integer $k$, so that the vector $c(n,k)$
has $k$ non zero components equal to $1/\sqrt{k}$ each, and the rest
of the components are zeros.
For a given choice of $k$, we sample the LP \eqref{lp_max}
with the above distribution of the entries and the cost vector
$c(n,k)$.
Our goal is to compare the resulting sample mean of the optimal objective value
to the sample mean obtained in the previous subsection for the same matrix dimensions
and for the strongly incompressible cost vector.
We fix $m=1000$ and $n=50$, and let $\hat{\mu}(k)$ to be the sample mean of optimal objective value of LP(\ref{lp_max}) with the cost vector $c(n,k)$. We denote the sample mean of objective value with the strongly incompressible cost vector by $\tilde{\mu}$.
Our empirical results show that, as long as $k$ is small (so that
$c(n,k)$ is very sparse), the value of the sample mean
differs significantly from the one in Table~\ref{tab: opt_obj_nonAsym}.
On the other hand, for $k$ sufficiently large the relative gap between the sample means defined by 
$$\left(\frac{ | \tilde{\mu} -  \hat{\mu}(k) | }{ \tilde{\mu} } \times 100 \right)\% $$
becomes negligible.
The results are presented in Table~\ref{tab:cost_obj}.

\subsection{Limiting distribution of the optimal objective value}

Recall that, given
mutually independent standard Gaussian variables,
their maximum converges (up to appropriate rescaling and translation)
to the Gumbel distribution as the number of variables tends to infinity.
If the vertices of the random polyhedron $P=\{x\in\R^n:\;Ax\leq {\bf 1}\}$
were behaving like independent Gaussian vectors (with a proper rescaling)
then the limiting distribution of $z^*$ for any given fixed
cost vector would be asymptotically Gumbel.
Our computations suggest that in fact the limiting distribution is Gaussian.

\begin{figure}[h!] 
  \centering
  \subfloat{\includegraphics[width=0.4\textwidth]{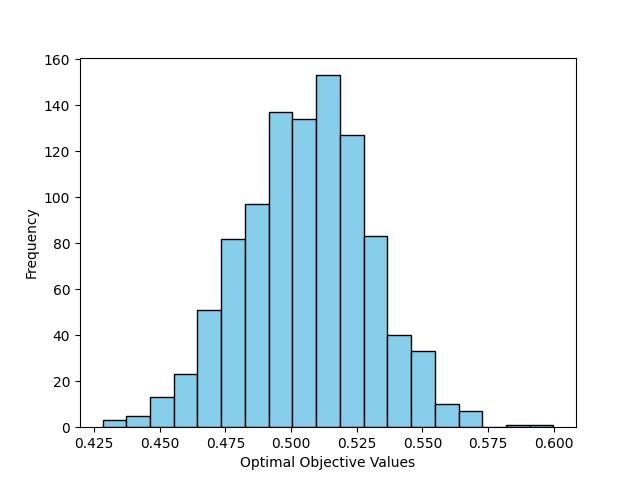}}\label{fig:f1g}
  \subfloat{\includegraphics[width=0.4\textwidth]{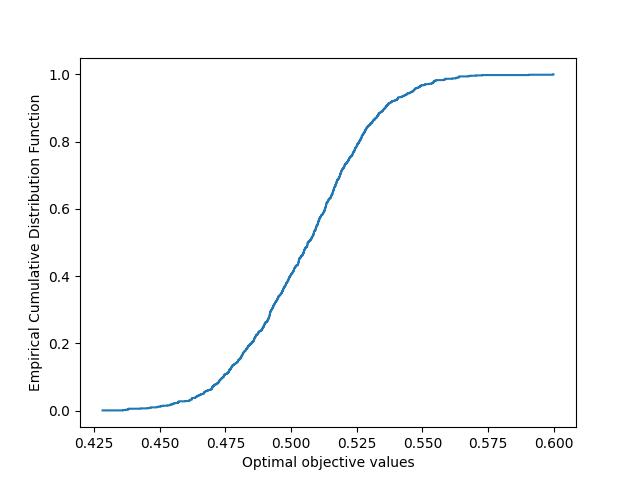}}\label{fig:f2g}
  \caption{The frequency histogram and the empirical cumulative distribution function of the optimal objective values for the Gaussian matrix $A$ with $m=1000$, $n=50$
  (the sample size = $1000$). The KS test results: KS statistic = $0.0232$, $p$-value =
  $0.6453$.}
  \label{fig: gaussian}
\end{figure}

\begin{figure}[h!] 
  \centering
  \subfloat{\includegraphics[width=0.4\textwidth]{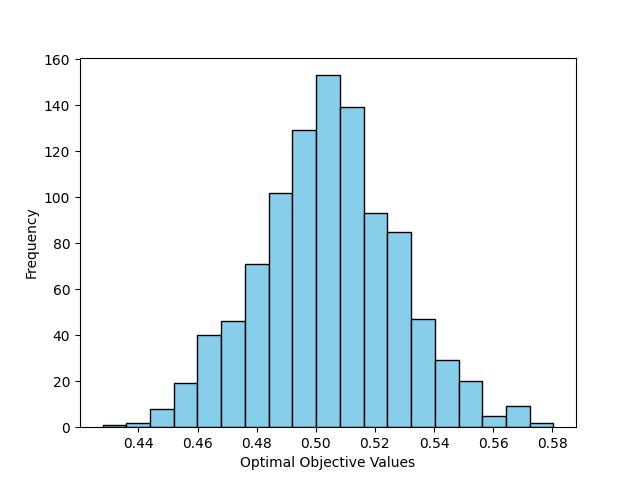}}\label{fig:f1r}
  \subfloat{\includegraphics[width=0.4\textwidth]{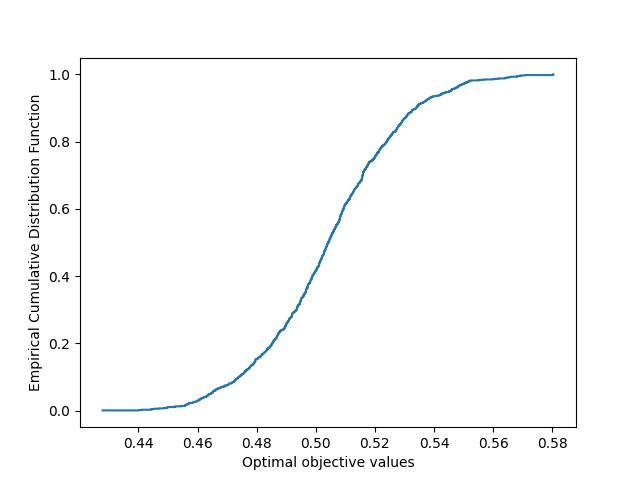}}\label{fig:f2r}
  \caption{The frequency histogram and the empirical cumulative distribution function of the optimal objective values for the Rademacher matrix $A$ with $m=1000$, $n=50$
  (the sample size = $1000$).
  The KS test results: KS statistic = $0.0219$, $p$-value = $0.7161$.}
  \label{fig: Rademacher}
\end{figure}

\bigskip

In our numerical experiments, we consider Gaussian and Rademacher
random coefficient matrices of dimensions $m=1000$ and $n=50$.
The sample size in either case is taken to be $1000$.
As in the previous experiment, we take $c$
to be a random vector with i.i.d Rademacher components rescaled
to have the Euclidean norm one. We generate the cost vector once so that
it is the same for every LP from a sample.
We employ the Kolmogorov--Smirnov (KS)
test to compare the sample distribution of $z^*$ to Gaussian
of a same mean and variance. 
In either case, the obtained p-value exceeds the conventional significance level of 0.05, indicating lack of substantial evidence to refute the null hypothesis that the data is normally distributed.
Further, visual representation of the frequency histogram and the empirical cumulative distribution function of the optimal values shown in Figures~\ref{fig: gaussian}
and~\ref{fig: Rademacher} validate this conjecture.

\bigskip

The next conjecture summarizes the experimental findings:
\begin{conjecture}[Limiting distribution of $z^*$]
Let $m$, $n$, $A(n)$ be as in Theorem~\ref{th main nongauss},
and for each $n$
let $c=c(n)$ be uniform random unit cost vector independent from $A(n)$.
Then the sequence of normalized random variables
$$
\frac{z^*-\Exp\,z^*}{\sqrt{\Var\,z^*}}
$$
converges in distribution to the standard Gaussian.
\end{conjecture}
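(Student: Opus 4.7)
The approach begins with the variance decomposition
\[
\Var(z^*)=\Exp_A\,\Var_c(z^*\mid A)+\Var_A\bigl(\Exp_c(z^*\mid A)\bigr),
\]
in which the two terms isolate the fluctuation of $h_P(c)$ under the random direction $c$ for a fixed $A$, and the fluctuation of the mean half-width $\Exp_c(z^*\mid A)=\tfrac12\cw(P)$ across realizations of $A$. The plan is to show that the first term dominates, and to establish a conditional central limit theorem for $z^*$ given $A$, so that the overall fluctuation is driven by the Gaussian-like randomness of $c$.

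Using the polar representation $c=g/\|g\|_2$ with $g$ standard Gaussian in $\R^n$, one has $z^*=h_P(g)/\|g\|_2$, where $h_P$ is the support function of the feasible polyhedron. As a preliminary, I would sharpen the outer radius estimate of Proposition~\ref{askfjnafkjnofjnfkjnasd} to match Proposition~\ref{prop main 3} in the subgaussian regime, yielding $R(P)=(1+o(1))/\sqrt{2\log(m/n)}$ with probability $1-n^{-\omega(1)}$. Since $g\mapsto h_P(g)$ is $R(P)$--Lipschitz with envelope-theorem derivative $\nabla_g h_P(g)=x^*(g)$ (the argmax on $P$, of norm at most $R(P)$), the Gaussian Poincar\'e inequality already gives $\Var_g(h_P(g)\mid A)\leq R(P)^2=O(1/\log(m/n))$, so that the conditional fluctuation of $z^*$ is of order $1/\sqrt{n\log(m/n)}$.

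The heart of the argument is the conditional CLT
\[
\frac{h_P(g)-\Exp_g h_P(g)}{\sqrt{\Var_g h_P(g)}}\;\Rightarrow\;\mathcal N(0,1).
\]
I would attempt Stein's method via Ornstein--Uhlenbeck exchangeable pairs $g_t=e^{-t}g+\sqrt{1-e^{-2t}}\,\tilde g$, which reduces the CLT to controlling
\[
\Exp\,\langle g-g_t,\,x^*(g)-x^*(g_t)\rangle
\]
in the limit $t\to 0$; this quantity measures the $L^2$--stability of the argmax under infinitesimal Ornstein--Uhlenbeck perturbations of $g$. Geometrically, for a random polyhedron with $m\gg n$ active rows, one expects the argmax to drift smoothly with $g$ with typical squared increment of order $t\cdot R(P)^2$. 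Establishing this stability, which requires a quantitative understanding of the vertex structure of the random polar body, is the main obstacle: since $h_P$ is only piecewise linear and $x^*(g)$ jumps between distinct vertices of $P$, the CLT demands that these jumps average out in a Lindeberg sense.

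To justify that the $\Var_A$--term is of lower order, I would bound the Frobenius-norm Lipschitz constant of $A\mapsto\cw(P(A))$ using the outer-radius estimate and apply Gaussian concentration in the Gaussian case (or Talagrand's convex concentration in the subgaussian case), targeting $\Var_A(\cw(P))=o(1/(n\log(m/n)))$. Combining the conditional CLT, the $A$--concentration of $\cw(P)$, and Slutsky's theorem then yields the conjecture.
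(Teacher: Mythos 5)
This statement is explicitly labeled a \emph{conjecture} in the paper: the authors offer no proof, only numerical evidence (a Kolmogorov--Smirnov test on a sample of optimal values in Section~\ref{numericssection}). So there is no argument in the paper against which your proposal can be measured; the honest comparison is between your proposal and nothing.

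As a proof attempt your proposal is incomplete, and you say so yourself: the conditional CLT for $h_P(g)$ given $A$ --- specifically, bounding $\Exp\,\langle g-g_t,\,x^*(g)-x^*(g_t)\rangle$, i.e.\ the $L^2$--stability of the argmax on a random polytope under small Ornstein--Uhlenbeck perturbations --- is flagged as ``the main obstacle,'' and no mechanism is offered to control the jumps of $x^*(g)$ across cones of the normal fan of $P$ beyond a heuristic. That is precisely the hard part; without it the argument does not close. A second gap is the claim that the $\Exp_A\Var_c$ term dominates the $\Var_A\Exp_c$ term. The Poincar\'e inequality gives only an \emph{upper} bound $\Var_c(z^*\mid A)\lesssim R(P)^2/n$; it does not give a matching lower bound, and if $P$ were close to a Euclidean ball this variance would collapse. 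The paper's own experiments point the other way: in Section~\ref{numericssection} the cost vector is \emph{fixed} across the sample and only $A$ is resampled, yet the histogram is already Gaussian with empirical standard deviation on the order of $1/\sqrt{m}$. This shows that the $A$--fluctuation of $z^*$ (for fixed $c$) is itself approximately Gaussian and non-negligible, and it neither confirms that the $c$--fluctuation dominates nor that it is the source of the Gaussianity. Any proof of the conjecture would have to account for which source of randomness drives the limit, and your decomposition as it stands leaves that unresolved.
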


\subsection{Standard deviation of the optimal objective value}

As in the previous two numerical experiments, in this one we consider two
types of the entries distributions: Gaussian and Rademacher.
The cost vector is generated according to the same procedure as above.
We sample the LP \eqref{lp_max} taking the sample size to be $50$,
and compute the sample standard deviation $\hat{\sigma}$ (the square
root of the sample variance) of the optimal objective value.
Based on our numerical studies we speculate
that standard deviation of $z^*$ is roughly of the order $1/\sqrt{m}$,
at least in the setting where $m$ is very large compared to $n$. 
Indeed, upon examination of the last column in Tables~\ref{tab: std_dev_gauss}
and~\ref{tab: std_dev_pmone}, we observe a consistent phenomenon:
the quantities 
are of order $1$ for various choices of $m$ and $n$. 

\begin{table}[h!]
    \centering
    \begin{tabular}{ccccc}
$m$  & $n$ & $ab$ & $\hat{\sigma}$ & $\hat{\sigma} \times \sqrt{m}$ \\
\hline  \\
1000 & 50 & 0.408539 &  0.02420 & 0.765 \\
2000 & 50 & 0.368161 &  0.01728 & 0.773 \\
4000 & 50 & 0.337791 &  0.01333 & 0.843 \\
6000 & 50 & 0.323170 &  0.01142 &  0.885 \\
8000 & 50 & 0.313877 &  0.01059 & 0.947 \\
10000 & 50 & 0.307196 & 0.01027 & 1.027 \\
1000 & 100 & 0.465991&  0.03014 & 0.953 \\
2000 & 100 & 0.408539 & 0.01592 & 0.712 \\
4000 & 100 & 0.368161 & 0.01172 & 0.742 \\
6000 & 100 & 0.349456 & 0.00990 & 0.767 \\
1000 & 150 & 0.528257 & 0.03192 & 1.010 \\
2000 & 150 & 0.441515 & 0.015066 & 0.674 \\
4000 & 150 & 0.391745 & 0.01155 & 0.731 \\
6000 & 150 & 0.368161 & 0.011850 & 0.918 \\

    \end{tabular}
    \caption{Sample standard deviation of the optimal objective value with Gaussian matrix $A$ (the sample size = $50$)}
    \label{tab: std_dev_gauss}
\end{table}

\begin{table}[h!]
    \centering
    \begin{tabular}{ccccc}
$m$  & $n$ & $ab$ & $\hat{\sigma}$ & $\hat{\sigma} \times \sqrt{m}$ \\
\hline  \\
1000 & 50 & 0.408539 & 0.020420 &  0.645 \\
2000 & 50 & 0.368161 & 0.017393 & 0.777 \\
4000 & 50 & 0.337791 & 0.011962 & 0.756 \\
6000 & 50 & 0.323170 & 0.010207 & 0.790 \\
8000 & 50 & 0.313877 & 0.010333 & 0.924 \\
10000 & 50 & 0.307196 & 0.008997 & 0.899 \\
1000 & 100 & 0.465991 & 0.023097 & 0.730 \\
2000 & 100 & 0.408539 & 0.014648 & 0.655 \\
4000 & 100 & 0.368161 & 0.010436 & 0.660 \\
6000 & 100 & 0.349456 & 0.011791 & 0.913 \\
1000 & 150 & 0.528257 & 0.032600 & 1.030 \\
2000 & 150 & 0.441515 & 0.015466 & 0.691 \\
4000 & 150 & 0.391745 & 0.011030 & 0.697 \\
6000 & 150 & 0.368161 & 0.009153 & 0.708 \\

    \end{tabular}
    \caption{The sample standard deviation of the optimal objective value with Rademacher matrix $A$ (the sample size = $50$)}
    \label{tab: std_dev_pmone}
\end{table}
\subsection{An algorithm for finding a near-optimal feasible solution}

As previously discussed, the proof of Theorem \ref{th main lower} involves an iterative projection process to restore feasibility from an initial point. 
In the process, we discretize the continuous space spanned by violated constraints at each iteration (i.e use a covering argument) as a means to apply probabilistic union bound. Recall that the goal of Algorithm~\ref{alg proof 3.1} described in the proof of Theorem \ref{th main lower} is to provide  theoretical guarantees for $z^*$. An exact software implementation of that algorithm is heavy in terms of both the amount of code and computations. 
On the other hand, it is of considerable interest to measure performance of block-iterative projection methods in the context of finding near optimal solution to instances of the random LP \eqref{lp_max}.
To approach this task, we consider a ``light'' version of the algorithm from the proof of Theorem \ref{th main lower}, which does not involve any discretizations (see Algorithm \ref{alg 1} below). 
The algorithm is a version of the classical Kaczmarz block-iterative projection method as presented in \cite{elfving1980block} and its randomized counterparts in \cite{NT2014} and \cite{ZL2023}. In our context, the blocks
correspond to the rows of the matrix $A$ that are violated by the initial point and its updates at each iteration. 
Table \ref{tab:feas sol} shows the results of running the algorithm for various parameters $m$ and $n$. For each instance of the LP, $r$ is the the number of iterations required
to restore feasibility, $x_0 = (2\log(m/n))^{-\frac{1}{2} } c$ is the initial point and $x$ is the output of the Algorithm \ref{alg 1}. Note that the obtained perturbations to $x_0$ are in the orthogonal complement of the cost vector $c$ and as a result $\|x_0\|_2 = z(x)$. The numbers $|I_0|$ and $|I_1|$ 
are
the number of violated constraints by $x_0$ at iteration 1 and $x_0 + x_1$ at iteration 2, respectively. Note that
these numbers are rather small compared to the size of the LP instance. 
We note that in our experiment, the algorithm did not converge for the $20000\times 50$
instance of the LP,
which we attribute to the random nature of the problem.
At the same time, the Algorithm converged fast due to small number of constraint violations
for all other given instances. 

\bigskip

\begin{algorithm}[H]
    \SetAlgoLined
    \KwIn{ $A$: $m\times n$ standard Gaussian random matrix,\\
     $\hspace{1.4cm}$ $c$: random cost vector uniformly distributed on the sphere}
    \KwOut{ $x$: near-optimal feasible solution to LP \eqref{lp_max}}
    $x \leftarrow (2\log(m/n))^{-\frac{1}{2} } c$\;
    $j \leftarrow 0$\;
    $\epsilon \leftarrow 0.1$\;
    \While{$x$ is not feasible}{
    $j \leftarrow j+1$\;
    Find the set of constraints $I_{j-1}$ that are violated or nearly violated
    by $x$, i.e all indices $i$ with $\langle\row_i(A),x \rangle >1 - \epsilon$\;
    Compute vector $b$ where $b_i = 1 - \epsilon - \langle\row_i(A),x \rangle $
    for all $i\in I_{j-1}$\;
    Project the constraints into the space $c^{\perp}$\  (i.e define $v_i = \row_i(A) - \langle \row_i(A) , c \rangle c$ for all $i\in I_{j-1}$) \;
    Find $x_j$ as a linear combination of $v_i$'s, $i\in I_{j-1}$, so that the constraints violated by $x$ are repaired (specifically, solve the linear system $M^TMu=b$ for $u$, then solve $x_j=Mu$ where $M$ is the matrix with columns $v_i$)\;
    $\epsilon \leftarrow \epsilon * 0.1$\;
    $x \leftarrow x+x_j$\;
    }
    return $x$

    \caption{Algorithm for finding near-optimal feasible solution to LP \eqref{lp_max}, an instance of block-iterative Kaczmarz projection method}
    \label{alg 1}
\end{algorithm}

\begin{table}[h!]
    \centering
    \begin{tabular}{cccccc}
$m$& $n$ & $r$ & $\|x_0\|_2 = z(x)$ & $|I_0|$  & $|I_1|$\\
\hline \\
1000 & 50 &   2 & 0.408539 & 11  & 1 \\ 
1000 & 100 &  2 & 0.465991 & 28  & 4 \\ 
2000 & 50 &   2 & 0.368161 & 6 & 1 \\ 
2000 & 100 &  2 & 0.408539 & 23  & 1 \\ 
4000 & 50 &   2 & 0.337791 & 13  & 2 \\ 
4000 & 100 &  2 & 0.368161 & 30  & 1 \\ 
6000 & 50 &   2 & 0.323170 & 22  & 21 \\ 
6000 & 100 &  2 & 0.349456 & 31  & 8 \\ 
8000 & 50 &   2 & 0.313877 & 22  & 6 \\ 
8000 & 100 &  2 & 0.337791 & 21  & 5 \\ 
10000 & 50 &  2 & 0.307196 & 19  & 1 \\ 
10000 & 100&  1 & 0.329505 & 34  & 0 \\ 
20000 & 100&  1 & 0.307196 & 29  & 0 \\ 
40000 & 50 &  1 & 0.273493 & 12  & 0 \\ 
40000 & 100&  2 & 0.288881 & 34  & 3 \\ 
60000 & 50 &  2 & 0.265558 & 16  & 2 \\ 
60000 & 100&  1 & 0.279576 & 36  & 0 \\ 
80000 & 50 &  1 & 0.260329 & 21  & 0 \\ 
80000 & 100&  2 & 0.273493 & 36  & 1 \\ 
100000 & 50&  2 & 0.256479 & 28  & 5 \\ 
100000 &100&  2 & 0.269040 & 35  & 1 \\

    \end{tabular}
    \caption{Numerical results for the Algorithm \ref{alg 1} for various $m$ and $n$ 
    ($r$ = the number of iterations to restore the feasibility).}
    \label{tab:feas sol}
\end{table}

\section{Conclusion}
In this paper, we study the optimal objective value $z^*$ of a random linear program where the entries of the $m\times n$ coefficient matrix $A$ have subgaussian distributions, the variables $x_1,\dots,x_n$
are free and the right hand side of each constraint is $1$. 
We establish sharp asymptotics $z^*=(1\pm o(1))(2\log(m/n))^{-\frac{1}{2} }$ in the regime $n\to\infty$, $\frac{m}{n}\to\infty$ and some additional
assumptions on the cost vectors. 
This asymptotic provides quantitative information about the geometry of the random polyhedron defined by the feasible region of the random LP \eqref{lp_max}, specifically about its spherical mean width. 
The computational experiments support our theoretical guarantees and give insights into several open questions regarding the asymptotic behaviour of the random LP. The connection between the proof of the main result and convex feasibility problems allows us to provide a fast algorithm for obtaining a near optimal solution in our randomized setting.
The random LP \eqref{lp_max}
is a step towards studying models with inhomogeneous coefficient matrices
and establishing stronger connections to practical applications.

\section*{Acknowledgements}
We are grateful to Dylan Altschuler for bringing our
attention to \cite[Section~3.7]{DZ1998}.

\section*{Declarations}

\noindent{\bf Funding.} Konstantin Tikhomirov
is partially supported by the NSF grant
DMS-2331037.

\noindent{\bf Conflict of interest/Competing interests.}
The authors declare no conflict of interest
in regard to this publication.

\noindent{\bf Code availability.}
The code for the numerical experiments is available on \url{https://github.com/marzb93/RandomLinearProgram}


\bibliographystyle{plain}
\bibliography{References}


\end{document}